\newtheorem{definition}{Definition}[section]
\newtheorem{proposition}[definition]{Proposition} 
\newtheorem{lemma}[definition]{Lemma} 
\newtheorem{theorem}[definition]{Theorem}
\newtheorem{remark}[definition]{Remark}
\newtheorem{example}[definition]{Example}
\newtheorem*{theorem*}{Theorem}
\newcommand{\ac}{\`}
\newcommand{\0}{\mathcal{O}}    
\newcommand{\Spf}{\mathrm{Spf}}
\newcommand{\f}[1] {\mathscr{#1}}
\newcommand{\Sp}{\mathrm{Spec}}
\begin{document}
\title{\textsc{On $p$-adic differential equations on semistable varieties}}
\author{Valentina Di Proietto}
\date{}
\maketitle

\textsc{Abstract -} \begin{small} In this paper we prove a comparison theorem between the category of certain modules with integrable connection on the complement of a normal crossing divisor of the generic fiber of a proper semistable variety over a DVR and the category of certain log overconvergent isocystrals on the special fiber of the same open.  
\end{small}

\section{Introduction}
The ``th\'eor\`eme d'alg\'ebrisation'' by Christol and Mebkhout (th\'eor\`eme 5.0-10 of \cite{CtMeIV}) asserts that on a open of a smooth and proper curve all the overconvergent isocrystals (with some non-Liouville conditions) are algebraic. The aim of this paper is to give a generalized version of this result to the case of a variety of arbitrary dimension that is the special fiber of a semistable variety. \\
We first recall the ``th\'eor\`eme d'alg\'ebrisation''  and explain in which sense our result generalizes it.
Let $V$ be a complete discrete valuation ring of mixed characteristic $(0,p)$ with uniformizer $\pi$, let $K$ be its fraction field and let $k$ be the residue field. Christol and Mebkhout consider a proper and smooth curve $X$ over $V$ and an affine open $U$ with complement $D$. They define a functor 
\begin{equation}\label{CM}
\dag:MICLS(U_K/K) \longrightarrow I^{\dag}((U_k,X_k)/\Spf(V))
\end{equation}
where the first category is the category of algebraic modules with connection on $U_K$ which satisfy certain convergent conditions and the second is the category of overconvergent isocrystals as defined by Berthelot in \cite{Be}.\\
They prove that under some non-Liouville conditions the functor $\dag$ is essentially surjective, \emph{i. e.} every overconvergent isocrystal is algebraic. Moreover they notice that, always assuming non-Liouville conditions, $\dag$ is fully faithful if one restricts to the category of algebraic modules with connection on $U_K$ with some convergent conditions that are extendable to module with connections on $X_K$ and logarithmic singularities along $D_K$. The image of this restricted functor turns out to be the category of overconvergent isocrystals with slope zero (probl\ac eme 5.0-14 1) of \cite{CtMeIV} and paragraph 6 of \cite{CtMeII} ).\\
We are interested in the following generalized situation.\\ Let $X$ be a proper semistable variety over $\Sp(V)$, which means that locally for the \'etale topology is \' etale over $\Sp V [x_1, \dots ,x_n,y_1,\dots ,y_n]/(x_1 \dotsm x_r - \pi)$, with $D$ a normal crossing divisor, which \' etale locally is given by the equation $\{y_1\dotsm y_s = 0\}$.
The divisor $X_k\cup D$ induces on $X$ a logarithmic structure that we denote by $M$; similarly, the closed point induces a logarithmic structure on $\Sp(V)$ that we denote by $N$.
We assume we have the following diagram of fine log schemes 
\begin{equation} \label{diagrammacommintro}
\xymatrix{ 
\ (X_k,M)\ \ar@{^(->}[r] \ar[d] &\ (X,M)\  \ar[d]& \ (X_K,M)\ \ar@{_(->}[l] \ar[d] \\  
\ (\Sp (k), N)\  \ar@{^(->}[r]& \  (\Sp (V), N)  & \ \ (\Sp (K),N)\ \ar@{_(->}[l] \\ 
}
\end{equation}
where the two squares are cartesian. The log structures denoted again by $M$ on the special fiber $X_k$ and by $N$ on $k$ are defined in such a way that the closed immersions of fine log schemes $(X_k, M)\hookrightarrow (X,M)$ and $(\Sp (k),N)\hookrightarrow (\Sp (V),N)$ are exact. In an analogous way the log structures on $X_K$ and on $K$ are defined in such a way that the open immersions $(X_K,M)\hookrightarrow (X,M)$ and $(\Sp (K), N)\hookrightarrow (\Sp (V),N)$ are strict. Let us note that the log structure on $K$ constructed in this way is isomorphic to the trivial log structure. \\
We consider on $X$ the open $U$ defined as the complement of the divisor $D$; then there is an open immersion
$$j:U=X\setminus D\hookrightarrow X$$
and it induces on $U$ the log structure $j^*(M)$ that, by abuse of notation, we again denote by $M$.\\ 
So we have a diagram analogous to (\ref{diagrammacommintro}) for $U$, with the same notations for the log structures:
\begin{equation*} \label{diagrammacommintroU}
\xymatrix{ 
\ (U_k,M)\ \ar@{^(->}[r] \ar[d] &\ (U,M)\  \ar[d]& \ (U_K,M)\ \ar@{_(->}[l] \ar[d] \\  
\ (\Sp (k), N)\  \ar@{^(->}[r]& \  (\Sp (V), N)  & \ \ (\Sp (K),N)\ \ar@{_(->}[l] \\ 
}
\end{equation*}
In this situation we consider the category, denoted by $MIC(U_K/K)^{reg}$, of pairs $(E,\nabla)$ where $E$ is a sheaf of coherent $\mathcal{O}_{U_K}$-modules and $\nabla$ is an integrable connection regular along $D_K$.\\
As for the rigid side we look at $I^{\dag}((U_k,X_k)/\Spf(V))^{log, \Sigma}$, the category of overconvergent log isocrystals on the log pair given by $((U_k,M),(X_k,M)/(\Spf (V),N))$ with $\Sigma$-unipotent monodromy along $D_k$, where $\Sigma$ is a subset of $\mathbb{Z}_p^h$, satisfying some non-Liouville hypothesis and $h$ is the number of the irreducible component of $D_k$. A log overconvergent isocrystal is represented by a module with connection on a strict neighborhood $W$ of the tube $]U_k[_{\hat{X}}$ in the tube $]X_k[_{\hat{X}}$. To define $\Sigma$-unipotent monodromy we proceed \'etale locally and we fix an irreducible component $D^{\circ}_{j,k}$ of $D^{\circ}_{k}$, the smooth locus of $D_k$; then the part of the tube $]D^{\circ}_{j,k}[_{\hat{X}}$ which is contained in $W$ is isomorphic to a product of an annulus of small width times certain rigid space associate to $D^{\circ}_{j,k}$, that we think as a base. A log overconvergent isocrystal $\mathcal{E}$ has $\Sigma$-unipotent monodromy along $D^{\circ}_{j,k}$ if $\mathcal{E}$, restricted to the product described above, admits a filtration such that every successive quotient is the pullback of a module with connection on the base twisted by a module with connection on the annulus depending on $\Sigma$. We say that $\mathcal{E}$ has $\Sigma$-unipotent monodromy along $D_k$ if the above condition holds for every irreducible component of $D^{\circ}_{k}$. The category of overconvergent log isocrystals is defined by Shiho in \cite{Sh4} as a log version of the category of overconvergent isocrystals defined by Berthelot in \cite{Be} and the notion of $\Sigma$-unipotent monodromy is introduced by Shiho in \cite{Sh6} as a generalization of the notion of unipotent monodromy introduced by Kedlaya 
in \cite{Ke}. Let us note that the notion of $\Sigma$-unipotent monodromy for a module with connection on an annulus, with $\Sigma$  satisfying some non-Liouville conditions, coincides with the notion of satisfying the Robba condition and having exponent in the sense of Christol and Mebkhout in $\Sigma$ (proposition 1.18 of \cite{Sh7}). \\
Our main result is: 
\begin{theorem}\label{mainintro}
There is a natural algebrization functor 
$$I^{\dag}((U_k,X_k)/\Spf(V))^{log, \Sigma}\longrightarrow MIC(U_K/K)^{reg}.$$
It is a fully faithful functor.
\end{theorem}
Let us note that our functor goes in the opposite direction with respect to Christol and Mebkhout's functor $\dag$.\\

The strategy of our proof is as follows.\\
We denote by $I_{conv}((X_k,M)/(\Spf(V),N))^{lf,\Sigma}$ the category of locally free log convergent isocrystals on  the log convergent site $((X_k,M)/(\Spf(V),N))_{conv}$ with exponents in $\Sigma$.  The category of log convergent isocrystals is defined by Shiho in \cite{Sh1}, as a log version of the category of convergent isocrystal defined by Ogus in \cite{Og} and by Berthelot in \cite{Be}.  We consider the category of log overconvergent isocrystals on $((U_k,M),(X_k,M)/(\Spf (V),N))$ with $\Sigma$-unipotent monodromy and we prove that the restriction functor
\begin{equation}\label{jdagintro}
j^{\dag}:I_{conv}((X_k,M)/(\Spf(V),N))^{lf,\Sigma}\longrightarrow I^{\dag}((U_k,X_k)/\Spf(V))^{log, \Sigma}
\end{equation}
is an equivalence of categories.\\
The equivalence of categories in (\ref{jdagintro}) is a generalization of theorem 3.16 of \cite{Sh6}, since Shiho proves the same result in the case of $X_k$ smooth and $D_k$ a divisor with simple normal crossing, \emph{i. e.} its components are regular and meet transversally. \\
On the other hand we have a fully faithful functor 
$$\tilde{i} : I_{conv}((X_k,M)/(\Spf(V),N))^{lf,\Sigma}\longrightarrow MIC((X_K, M)/K)^{lf, \Sigma}$$
between locally free log convergent isocrystals with exponents in $\Sigma$ and locally free $\mathcal{O}_{X_K}$-modules with integrable connection on $X_K$, logarithmic singularities on $D_K$ and exponents in $\Sigma$. The theorem of algebraic logarithmic extension of \cite{AnBa} I,4 gives us a fully faithful functor
\begin{equation}\label{ABintro}
MIC((X_K, M_D)/K)^{lf, \Sigma}\longrightarrow MIC(U_K/K)^{reg}.
\end{equation}
If we denote by $MICLS(U_K/K)^{reg, \Sigma}$ the essential image of the functor $\tilde{i}$ in $MIC(U_K/K)^{reg}$, we can conclude that we have an equivalence of categories 
$$I^{\dag}((U_k,X_k)/\Spf(V))^{log, \Sigma}\longrightarrow MICLS(U_K/K)^{reg, \Sigma}.$$

Let us now describe in detail the contents of the paper.\\
In the first paragraphs we suppose that $(X, M)\rightarrow (\Sp (V), N)$ is a general log smooth morphism of fine log schemes (not necessarily semistable), with $X$ proper and $X_k$ reduced. We denote by $(\hat{X}, M)\rightarrow (\Spf (V), N)$ the associated morphism of the $p$-adic completions. We recall the definition of modules with integrable log connection on a fine log scheme, \emph{i.e.} the category $MIC((X_K, M)/K)$, the definition of log infinitesimal isocrystals, the category $I_{inf}((\hat{X},M)/(\Spf(V),N))$, and the definition of log convergent isocrystals, the category $I_{conv}((X_k,M)/(\Spf(V),N))$. Using the fact that $X$ is proper Shiho (Theorem 3.15, Corollary 3.2.16 of \cite{Sh1}) shows that $MIC((X_K, M)/K)$ is equivalent to the category $I_{inf}((X_k,M)/(\Spf(V),N))$ of log infinitesimal isocrystals . We prove that the property of being locally free is stable with respect to this equivalence of categories (propositions \ref{primolibero} and \ref{secondolibero}), so that we have an equivalence of categories between $I_{inf}((X_k,M)/(\Spf(V),N))^{lf}$ and $MIC((X_K, M)/K)^{lf}$. We use the definition of log convergent site and isocrystals on it given by Shiho in chapter 5 of \cite{Sh1} and the functor 
$$\Phi : I_{conv}(X_k,M)/(\Spf(V),N))\longrightarrow I_{inf}((\hat{X},M)/(\Spf(V),N)),$$
that he defines between log convergent isocrystals and log infinitesimal isocrystal. We adapt the proof of proposition 5.2.9 of \cite{Sh1} to show that 
\begin{equation}
\begin{split}
\tilde{\Phi} : I_{conv}((X_k,M)/(\Spf(V),N))^{lf}\longrightarrow I_{inf}((\hat{X},M)&/(\Spf(V),N))^{lf}\\
&\cong MIC((X_K, M)/K)^{lf}
\end{split}
\end{equation}
is fully faithful (theorem \ref{ff}). Shiho proves full faithfulness on the subcategory of log convergent isocrystals that are iterated extensions of the unit object. As in Shiho's proof, the key tool used is locally freeness. Finally, we give a characterization of the essential image of the functor $\tilde{\Phi}$ in terms of special stratifications, a suitable modification of the special stratifications introduced by Shiho in \cite{Sh1}. We conclude by proving that requiring a stratification to be special is the same as requiring that the radius of convergence of the stratification is 1. In this last part we do not use log differential calculus, because we prove that we can restrict to the case of trivial log structures (proposition \ref{U denso}).\\ 
Then we restrict to the semistable situation: we suppose that the morphism $(X,M)\rightarrow (\Sp(V),N)$ is as before (\ref{diagrammacommintro}). 
After describing the geometric situation, we introduce the definition of log convergent isocrystals with exponents in $\Sigma$, using log-$\nabla$-modules defined by Kedlaya in \cite{Ke} and used by Shiho in \cite{Sh6}. We first define the notion of exponents in $\Sigma$ \'etale locally using coordinates, then we prove that it is independent on the particular \'etale cover chosen and on the choice of the coordinates (lemma \ref{indepnilpotentresidue}). After that, following Shiho, we recall the notion of $\Sigma$-unipotence for a log-$\nabla$-module defined over a product of a rigid analytic space and a polyannulus and also the extension theorem for $\Sigma$-unipotent log-$\nabla$-modules. Then we analyze log overconvergent isocrystals. Our setting is different from Shiho's (\cite{Sh4}), since in our case the base has a non-trivial log structure. To define a log overconvergent isocrystal with $\Sigma$-unipotent monodromy we proceed \'etale locally. Then we recall the three key propositions that we use in the proof of the main theorem. The first property is called ``generization of monodromy'' and asserts that the property of being $\Sigma$-unipotent is generic on the base in some sense that we make precise (proposition \ref{Generization of monodromy}).
The second property, called ``overconvergent generization", says that the property of being $\Sigma$-unipotent can be extended on strict neighborhoods (proposition \ref{Overconvergent generization}). The third one says that under certain conditions a convergent log-$\nabla$-module with exponents in $\Sigma$ is $\Sigma$-unipotent (proposition \ref{Convergent plus nilpotent implies unipotent}). The proofs of these propositions are given in \cite{Sh6} as a generalization of the one contained in \cite{Ke}. Using these properties we prove that the notion of $\Sigma$-unipotent monodromy for an overconvergent isocrystal is well posed (lemma \ref{indepunipmonodromy}). Finally we prove (theorem \ref{log extension}) that the restriction functor
$$j^{\dag}:I_{conv}((X_k,M)/(\Spf(V),N))^{lf,\Sigma}\longrightarrow I^{\dag}((U_k,X_k)/\Spf(V))^{log, \Sigma}$$
is an equivalence of categories. 
The strategy of the proof is the same as in theorem 3.16 of \cite{Sh6} and theorem 6.4.5 of \cite{Ke}.\\
In the last part we verify that the notion of exponents in $\Sigma$ behaves well with respect to the functor $\tilde{i}$, \emph{i. e.} the functor 
$$I_{conv}((X_k,M)/(\Spf(V),N))^{lf, \Sigma}\longrightarrow MIC((X_K, M_D)/K)^{lf, \Sigma}$$ 
is well defined. Finally we adapt Andr\'e and Baldassarri's theorem of algebraic logarithmic extension to find theorem \ref{mainintro}.\\

\section{Connections with logarithmic poles}
We will recall the definition of log connection on a fine log scheme or on a $p$-adic fine log formal scheme, which is taken from \cite{Sh1} definition 3.1.1; see also \cite{Kz} paragraph 4.\\
We suppose the reader familiar with the language of log schemes introduced in \cite{Ka} and with his version for formal schemes given in \cite{Sh1} chapter 2.\\ 
We denote by $V$ a discrete valuation ring of mixed characteristic $(0, p)$, complete and separated for the $p$-adic topology, $\pi$ a uniformizer of $V$, $K$ its fraction field and $k$ its residue field. By a formal scheme over $\Spf (V)$ or a formal $V$-scheme, we mean a $p$-adic Noetherian and topologically of finite type formal scheme over $V$.\\
According to Shiho's notation we will give the following definition.

\begin{definition}
If $X$ is a scheme, we denote the category of coherent $\mathcal{O}_X$-modules by Coh$(\mathcal{O}_X)$. If $X$ is formal $V$-scheme we denote by Coh$(K\otimes \mathcal{O}_X)$ the category of sheaves of $K\otimes_V\mathcal{O}_X$-modules that are isomorphic to $K\otimes_V F$ for some coherent $\mathcal{O}_X$-module $F$. We will call an object of Coh$(K\otimes \mathcal{O}_X)$ an isocoherent sheaf.
\end{definition}
The category of isocoherent sheaves is introduced in \cite{Og} and (see \cite{Og} remark 1.5) is equivalent to the category of coherent sheaves on $X^{an}$, the rigid analytic space associated to the $p$-adic formal scheme $X$ via Raynaud generic fiber. 
\begin{definition}\label{connections}
Let $f:(X,M)\rightarrow(S,N)$ be a map of fine log schemes (resp. fine log formal $V$-schemes) and let $E$ be a coherent $\mathcal{O}_{X}$-module (resp. $E$ $\in $ Coh$(K\otimes \mathcal{O}_X)$). A log connection on $E$ is an $\mathcal{O}_S$-linear map
$$\nabla: E \rightarrow E\otimes \omega^1_{(X,M)/(S,N)}$$ 
that is additive and satisfies the Leibniz rule:
$$\nabla(ae)=a\nabla(e)+e\otimes da$$
for $a$ $\in$ $\mathcal{O}_X$ and $e$ $\in$ $E.$\\ 
Here $\omega^1_{(X,M)/(S,N)}$ denotes the sheaf of log differentials (resp. the sheaf of log formal differentials).\\
We can extend $\nabla$ to $\nabla_i$ 
$$E\otimes \omega^i_{(X,M)/(S,N)}\xrightarrow{\nabla_i}E\otimes \omega^{i+1}_{(X,M)/(S,N)},$$ 
where $\nabla_i(e\otimes \omega)=e \otimes d\omega+(-1)^i\nabla(e)\wedge \omega$. We say that $\nabla$ is integrable if $\nabla_{i+1} \circ \nabla_i=0$ for all $i$.\\ 
We indicate the category of pairs $(E,\nabla)$ of a coherent sheaf $E$ (resp. an isocoherent sheaf) and an integrable log connection $\nabla$ with $MIC((X,M)/(S,N))$ (resp. $\widehat{MIC}((X,M)/(S,N))$).  If $M$ and $N$ are isomorphic to the trivial log structures we will write $M=\textrm{triv}$ and $N=\textrm{triv}$ and we use the notation $MIC(X/S)$ (resp. $\widehat{MIC}(X/S)$) to denote $MIC((X,\textrm{triv})/(S,\textrm{triv}))$ (resp. $\widehat{MIC}((X,\textrm{triv})/(S,\textrm{triv}))$).
\end{definition}
From proposition 8.9 of \cite{Kz} we know that in the smooth case in characteristic zero every coherent module with integrable connection is locally free; more  precisely we have the following result.
\begin{proposition}\label{8.9Ka}
If $S$ is the spectrum of a field of characteristic $0$ and $X$ is a smooth scheme over $S$, then, for every object $(E,\nabla)$ in $MIC(X/S)$, $E$ is a locally free $\mathcal{O}_X$-module.
\end{proposition}  
This proposition is not true if one admits log-connections; in fact, there is the following example. 
\begin{example}\label{esempiodishiho}
Let $X$ be a curve over $K$ and $D$ a closed point, locally defined by the equation $\{t=0\}$. If we call $M_D$ the log structure on $X$ induced by $D$ we can consider the following log connection
$$d:\mathcal{O}_{X}\rightarrow \omega^{1}_{(X,M_D)/(K,triv)}.$$
If we consider the subsheaf $\mathcal{O}_{X_K}(-D)$, that consists of sections vanishing on $D$, then $d$ induces a log connection on $\mathcal{O}_{X_K}(-D)$. We can see it locally: every section of $\mathcal{O}_{X_K}(-D)$ can be written as a product of $ft$, with $f$ in $\mathcal{O}_{X_K}$. The induced log connection is:
$$\mathcal{O}_{X_K}(-D)\rightarrow \mathcal{O}_{X_K}(-D)\otimes \omega^{1}_{(X,M_D)/(K,triv)}$$
$$ft\mapsto d(ft)=fdt+tdf=ft d\mathrm{log} t+t df$$
We can induce an integrable log connection on the quotient $\mathcal{O}_{X_K}/\mathcal{O}_{X_K}(-D)=\mathcal{O}_D$ which is a skyscraper sheaf.
\end{example}
As in the case where the log structures are trivial (see for example \cite{BeOg} chapter 1), the category of modules with integrable log connections is equivalent to the category of log stratifications: we now describe this equivalence.\\
If $(X,M)\rightarrow(S,N)$ is a morphism of fine log schemes (resp. a morphism of fine log formal schemes), we denote by $(X^n,M^n)$ the $n$-th log infinitesimal neighborhood of $(X,M)$ in $(X,M)\times_{(S,N)}(X,M) $(defined in \cite{Ka} (5.8), \cite{Sh1} remark 3.2.4 as a log version of the $n$-th infinitesimal neighborhood described in \cite{BeOg} chapter 1).
\begin{definition}\label{Stratifications}
Let $(X, M)\rightarrow(S, N)$ be a morphism of fine log schemes (resp. of fine log formal $V$-schemes) and let $E$ be a coherent sheaf (reps. an isocoherent sheaf on $X$). Then, a log stratification (resp. a formal log stratification) on $E$ is a family of morphisms $\epsilon_n:\mathcal{O}_{X^n}\otimes E\rightarrow E\otimes\mathcal{O}_{X^n}$, that satisfy the conditions:
\begin{itemize}
\item[(i)] $\epsilon_n$ is $\mathcal{O}_{X^n}$-linear and $\epsilon_{0}$ is the identity;
\item[(ii)] $\epsilon_n$ and $\epsilon_m$ are compatible via the maps
$$\mathcal{O}_{X^n}\rightarrow \mathcal{O}_{X^m},\; \textrm{for}\; m\leq n;$$
\item[(iii)]
(cocycle condition) if we call $p_{i,j}$ (for $i,j=1,2,3$) the projections from the $n$-th log infinitesimal neighborhood $(X^n(2),M(2)^n)$ of $(X,M)$ in $(X,M)\times_{(S,N)} (X,M)\times_{(S,N)} (X,M)$ to the $n$-th log infinitesimal neighborhood $(X^n,M^n)$ of $(X,M)$ in $(X,M)\times_{(S,N)} (X,M)$ 
$$p_{i,j}:(X^n(2),M^n(2))\rightarrow (X^n,M^n),$$
then for all $n$ 
$$p_{1,2}^*(\epsilon_n)\circ p_{2,3}^*(\epsilon_n)=p_{1,3}^{*}(\epsilon_n).$$ 
\end{itemize}
We denote by $Str((X,M)/(S,N))$ the category of log stratifications (resp. with $\widehat{Str}((X,M)/(S,N))$ the category of log formal stratifications).
\end{definition}
Theorem 3.2.15 of \cite{Sh1} gives us the equivalence of categories that we announced: if $(X,M)\rightarrow(S,N)$ is log smooth morphism of fine log schemes over $\mathbb{Q}$ (resp. a formally smooth morphism of fine log formal $V$-schemes), then
$$Str((X,M)/(S,N))\cong MIC((X,M)/(S,N))$$
$$(\mathrm{resp.} \, \widehat{Str}((X,M)/(S,N))\cong \widehat{MIC}((X,M)/(S,N))).$$
\section{Log infinitesimal isocrystals}
We now define the infinitesimal site and log isocrystals on it. These are Shiho's  log formal analogous of Grothendieck's infinitesimal site and crystals on it defined in \cite{Gr} or \cite{BeOg} chapter 1. All the definitions that follow are taken from chapter 3 of \cite{Sh1}. We define the infinitesimal site only in the case of a morphism of fine log formal schemes, but analogous definition can be given for a morphism of log schemes.
\begin{definition}
Let $(\mathscr{X},M)\rightarrow(\mathscr{S},N)$ be a morphism of fine log formal $V$-schemes. An object of the log infinitesimal site $((\mathscr{X},M)/(\mathscr{S},N))_{inf}$, or by brevity $(\mathscr{X}/\mathscr{S})^{log}_{inf}$ when the log structures are clear, is a $4$-ple $(\f{U},\f{T},L,\phi)$ such that $\f{U}$ is a formal $V$-scheme formally \'{e}tale over $\f{X}$, $(\mathscr{T},L)$ is a fine log formal $V$-scheme over $(\f{S},N)$ and 
$\phi: (\f{U},M)\rightarrow(\f{T},L)$ is a nilpotent exact closed immersion of log formal $V$-schemes over $(\f{S}, N)$. 
A morphism between $(\f{U},\f{T},L,\phi)$ and $(\f{U}',\f{T}',L',\phi')$ is pair of maps $g:(\f{T},L)\rightarrow(\f{T}',L')$ and $f:\f{U}\rightarrow \f{U}'$ such that $\phi'\circ f=g \circ \phi$. The coverings in this site are the coverings of $\f{T}$ for the \'{e}tale topology $\{\f{T}_i\rightarrow\f{T}\}_i$ such that $\f{U}_i=\f{T}_i\times_{\f{T}}\f{U}$. We sometimes denote the $4$-ple $(\f{U},\f{T},L,\phi)$ simply by $\f{T}.$ 
\end{definition}
\begin{definition}\label{loginf}
A log isocrystal on the infinitesimal site $(\mathscr{X}/\mathscr{S})^{log}_{inf}$, or a log infinitesimal isocrystal, is a sheaf $\mathcal{E}$ on $(\mathscr{X}/\mathscr{S})^{log}_{inf}$ such that: 
\begin{itemize}
\item[(i)] for every object $(\f{U},\f{T},L,\phi)$ the Zariski sheaf $\mathcal{E}_{\f{T}}$ induced on $\f{T}$ is an isocoherent sheaf;
\item[(ii)] for every morphism $g:\f{T}\rightarrow\f{T'}$, the map $g^*(\mathcal{E}_{\f{T}'})\rightarrow \mathcal{E}_{\f{T}}$ is an isomorphism.
\end{itemize}
We denote the category of log isocrystals on the infinitesimal site $(\mathscr{X}/\mathscr{S})^{log}_{inf}$ by $I_{inf}(\f{X}/\f{S})^{log}$.
\end{definition}
\begin{definition}\label{locally free}
Let $\f{X}$ be a formal $V$-scheme and let $\mathcal{F}$ be an isocoherent sheaf. We say that it is locally free module if there is a formal affine covering $\{U_i\}_{i\in I}$ of $\mathscr{X}$ such that for every $U_i=\Spf{A_i}$, there exists a finitely generated $A_i$-module $F_i$ such that $\mathcal{F}(U_i)=K\otimes F_i$ is a projective $K\otimes A_i$-module.  
\end{definition}
\begin{definition}\label{locally free2}
A log isocrystal $\mathcal{E}$ on the infinitesimal site $(\mathscr{X}/\mathscr{S})^{log}_{inf}$ is said to be locally free if for every object $(\f{U},\f{T},L,\phi)$ of the infinitesimal site, the sheaf $\mathcal{E}_{\f{T}}$ induced on $\f{T}$ is an isocoherent locally free module. \\
We will denote the subcategory of $I_{inf}(\f{X}/\f{S})^{log}$ consisting of the locally free infinitesimal log isocrystal by $I_{inf}(\f{X}/\f{S})^{log,lf}$.
\end{definition}
Thanks to theorem 3.2.15 of \cite{Sh1} we can see that if $(\f{X}, M)\rightarrow (\f{S}, N)$ is a formally log smooth morphism of fine log formal $V$-schemes, then there exists a canonical equivalence of categories
$$I_{inf}((\f{X},M)/(\f{S},N))\cong \widehat{MIC}((\f{X},M)/(\f{S},N))$$
\section{Log convergent isocrystals}
In this section we define the log convergent site and the isocrystals on it. The following definitions are taken from \cite {Sh1} paragraph 5.1.
\begin{definition} 
For every log formal $V$-scheme $(\mathscr{Y},M)$ we indicate with $\f{Y}_1$ the closed subscheme defined by the ideal $p$ and the associated reduced subscheme of $\f{Y}_1$  by $\f{Y}_0$.
\end{definition}

\begin{definition}
Let $(\mathscr{X},M)\rightarrow(\mathscr{S},N)$ be a morphism of fine log formal $V$-schemes. An enlargement of $(\mathscr{X},M)$ over $(\mathscr{S}, N)$ is a triple $(\mathscr{T},L,z)$, that we will indicated with $\mathscr{T}$, such that $(\f{T},L)$ is a fine log formal $V$-scheme over $(\f{S},N)$ and $z$ is a morphism $(\f{T}_0,L)\rightarrow(\f{X},M)$ over $(\f{S},N)$.
A morphism between two enlargements $(\mathscr{T},L,z)$ and $(\mathscr{T}',L',z')$ is a morphism $g:(\f{T},L)\rightarrow (\f{T}',L')$ such that $z=z'\circ g_0$, where $g_0:(\mathscr{T}_0,L)\rightarrow (\mathscr{T}'_0,L')$ is the map induced by $g$.
\end{definition}
\begin{definition}
We define the log convergent site of $(\mathscr{X},M)\rightarrow(\mathscr{S},N)$ to be the site whose objects are enlargements, morphisms are morphisms of enlargements and coverings are given by the \'etale topology on $\f{T}$. We denote it by $((\mathscr{X},M)/(\mathscr{S},N))_{conv}$ or $(\mathscr{X}/\mathscr{S})^{log}_{conv}$ if there is no ambiguity about the log structures.
\end{definition}
\begin{definition}\label{logconvisocrystals}
A log isocrystal on $(\mathscr{X}/\mathscr{S})^{log}_{conv}$, or a log convergent isocrystal, is a sheaf $\mathcal{E}$ on $(\mathscr{X}/\mathscr{S})^{log}_{conv}$ such that:
\begin{itemize}
\item[(i)] for every enlargement  $(\f{T},L,z)$ the Zariski sheaf $\mathcal{E}_{\f{T}}$ induced on $\f{T}$ is an isocoherent sheaf;
\item[(ii)] for every morphism of enlargements $g:(\f{T},L)\rightarrow(\f{T'},L')$, the map $g^*(\mathcal{E}_{\f{T}'})\rightarrow \mathcal{E}_{\f{T}}$ is an isomorphism.
\end{itemize}
We denote  by $I_{conv}(\f{X}/\f{S})^{log}$ the category of log isocrystals on the log convergent site.
\end{definition}
\begin{definition}\label{locally free1}
A log isocrystal $\mathcal{E}$ on the convergent site $((\mathscr{X},M)/(\mathscr{S},N))_{conv}$ is locally free if for every object $\f{T}$ on the convergent site the sheaf $\mathcal{E}_{\f{T}}$ induced on $\f{T}$ is an isocoherent locally free sheaf. \\
We denote the subcategory of $I_{conv}(\f{X}/\f{S})^{log}$ consisting of the locally free log isocrystals on the convergent site by $I_{conv}(\f{X}/\f{S})^{log, lf}$.
\end{definition}

\section{Relations between algebraic and analytic modules with integrable connections}\label{Relations between algebraic and analytic modules with integrable connections}
In what follows we consider the following situation: we fix $(X,M)\rightarrow(\Sp(V),N)$ a log smooth and proper morphism of fine log schemes. We denote by $(X_K,M)\rightarrow (\Sp(K),N)$ its generic fiber and $(X_k,M)\rightarrow(\Sp(k),N)$ its special fiber, that we suppose to be reduced.  
So we have a commutative diagram  
\begin{equation} \label{diagrammacomm}
\xymatrix{ 
\ (X_k,M)\ \ar@{^(->}[r] \ar[d] &\ (X,M)\  \ar[d]_f& \ (X_K,M)\ \ar@{_(->}[l] \ar[d] \\  
\ (\Sp (k), N)\  \ar@{^(->}[r]& \  (\Sp (V), N)  & \ \ (\Sp (K),N)\ \ar@{_(->}[l] \\ 
}
\end{equation}
The log structures on $X_k$ and $\Sp(k)$, that with an abuse of notation we again call $M$ and $N$, are defined in such a way that the inclusions in $(X,M)$ and $(\Sp(V),N)$ in this diagram are exact closed immersions of log schemes. \\
In the same way we define the log structures $M$ on $X_K$ and $N$ on $\Sp(K)$ as the log structures defined in such a way that the inclusions to $(X,M)$ and $(\Sp(V),N)$ are strict.\\ 
We consider the $p$-adic completion of $(X,M)\rightarrow (\Sp(V),N)$ and we call it $(\hat{X},M) \rightarrow (\Spf(V),N)$. With $(\hat{X},M)$ we mean $\hat{X}=(X_k, \varprojlim \mathcal{O}_{X}/p^n \mathcal{O}_X)$ with the log structure, that we call again $M$ with an abuse of notation, defined as the pull back of $M$ via the canonical morphism $\hat{X}\rightarrow X$.\\
Another way to see this is \cite{ChFo} Definition-Lemma 0.9, where the authors prove that the log structure $M$ over $\hat{X}$ is isomorphic to $\varprojlim_n (M)_n$ with $(M)_n$ the log structure on $\hat{X}_n=(X_k, \mathcal{O}_X/p^n \mathcal{O}_X)$ that is the pull-back of $M$ via the morphism $\hat{X}_n\rightarrow X$.\\
Now we want to construct a fully faithful functor
$$i : I_{conv}((\hat{X},M)/(\Spf(V),N))^{lf}\longrightarrow MIC((X_K, M)/(K,N))^{lf}.$$
In corollary 3.2.16 of \cite{Sh1} we have a useful characterization of algebraic modules with integrable log connection: the following result holds. 
\begin{proposition}\label{derhaminf} Under the above assumptions, there is an equivalence of categories 
$$\Psi:MIC((X_K, M)/(\Sp(K),N))\longrightarrow I_{inf}((\hat{X},M)/(\Spf(V),N)).$$ 
\end{proposition}
As we saw in example \ref{esempiodishiho}, it is not true that every coherent module with integrable connection is locally free, so we will restrict to the category that we call  
$$MIC((X_K,M)/(K,N))^{lf},$$
that consists of pairs $(E,\nabla)$ where $\nabla$ is an integrable log connection and $E$ a locally free $\mathcal{O}_{X_{K}}$-module. \\
In the next two propositions we will see that the functor $\Psi$ of proposition \ref{derhaminf} induces an equivalence of categories 
$$MIC((X_K,M)/(K,N))^{lf}\longrightarrow I_{inf}((\hat{X},M)/(\Spf(V),N))^{lf}.$$
\begin{proposition}\label{primolibero}
For every element $\mathcal{E}=(E,\nabla)$ in $MIC((X_K,M)/(K,N))^{lf}$, the corresponding element $\Psi(\mathcal{E})$ $\in $ $I_{inf}((\hat{X},M)/(\Spf(V),N))$  is a log infinitesimal locally free isocrystal .
\end{proposition} 
\begin{proof}
For the proof we look carefully at the definition of the functor $\Psi$. 
The functor is defined as the composition of three functors each one being an equivalence of category.
The first functor $\Psi_1$ is the one that gives the equivalence of category between $MIC((X_K,M)/(K,N))$ and the category of log stratifications. So given $\mathcal{E}=(E,\nabla)$, with $E$ a locally free $\mathcal{O}_{X_{K}}$-module, $\Psi_1(\mathcal{E})$ is again the $\mathcal{O}_{X_{K}}$-module $E$ with a collection of isomorphisms $\epsilon_n:\mathcal{O}_{X_{K}^n}\otimes E\rightarrow E\otimes\mathcal{O}_{X_{K}^n}$, where $(X_{K}^{n},M^n)$ is the $n$-th log infinitesimal neighborhood of $(X_{K},M)$ in $(X_{K},M)\times_{(K,N)}(X_{K},M)$, that satisfies the conditions of definition \ref{Stratifications}.
By lemma 3.2.7 of \cite{Sh1} we know that all the $\mathcal{O}_{X_{K}^n}$ are free $\mathcal{O}_{X_{K}}$-modules, so $E\otimes\mathcal{O}_{X_{K}^n}$ are locally free $\mathcal{O}_{X_{K}}$-modules.

Now, thanks to example 1.4 of \cite{Og}, that uses a formal version of GAGA principle, we know that the category of coherent $\mathcal{O}_{X_{K}}$-modules on $X_{K}$ is equivalent to the category of isocoherent modules on $\hat{X}$. So we can associate to our $E$ an isocoherent sheaf $\hat{E}$, that we now show to be locally free.
The functor that gives this equivalence is defined locally by extension of scalars: if we suppose that $X=\Sp(B)$ then the functor is
$$E\longmapsto E\otimes_{B_{K}}\hat{B}_{K} $$
where $B_{K}=B\otimes_V K$ and $\hat{B}_K=\varprojlim_n B/p^n \otimes K$. If $E$ is locally free as coherent $\mathcal{O}_{X_K}$-module, then $E$ is a projective-$B_K$ module. This implies that $E\otimes_{B_{K}}\hat{B}_{K}$ is a projective $\hat{B}_{K}$-module too. Indeed, since $E$ is projective,  there exists a $B_K$-module $F$ such that $E\oplus F$ is a free $B_K$-module. Then $(E\oplus F)\otimes_{B_{K}}\hat{B}_{K}$ is a free $\hat{B}_K$-module, which implies that $E\otimes_{B_{K}}\hat{B}_{K}$ is a projective $\hat{B}_K$-module. \\
Moreover if we indicate with $(\hat{X}^n, M^n)$ and $(X_{K}^n,M^n)$ the $n$-th log infinitesimal neighborhoods of the diagonal morphisms $(\hat{X},M)\rightarrow (\hat{X},M)\times_{(\Spf(V),N)}(\hat{X},M)$ and $(X_K,M)\rightarrow (X_K,M)\times_{(K,N)} (X_K,M)$ respectively, then we have an equivalence of categories also between $\textrm{Coh}(\mathcal{O}_{X_{K}^n})$ and $\textrm{Coh}(K\otimes\mathcal{O}_{\hat{X}^n})$ (\cite{Og} (1.4).)\\
This means that we have a functor $\Psi_2$ from $Str((X_K,M)/K)$ to $\widehat{Str}((\hat{X},M)/(\Spf(V),N))$. Moreover $\Psi_2$ is an equivalence of categories which sends locally free objects in locally free objects.

Now we construct the functor $\Psi_3$ between $\widehat{Str}((\hat{X},M)/(\Spf(V),N))$ and $I_{inf}(((\hat{X},M)/(\Spf(V),N))$.\\
Let us take an element $(\f{U},\f{T},L, \phi)$ of the log infinitesimal site $(\hat{X}/\Spf(V))^{log}_{inf}$. We know by definition that $(\f{U},M)\rightarrow(\Spf(V),N)$ is formally log smooth over $(\Spf(V),N)$, because $(\f{U},M)$ is formally \'etale over $(\hat{X},M)$, that is formally log smooth over $(\Spf{V},N)$. Therefore we have a diagram 
\[
\xymatrix{ 
\ (\f{U},M)\ \ar[r] \ar@{^(->}[1,0]_\phi &\ (\f{U},M)\  \ar[d] \\  
\ (\f{T},L)\  \ar[r]& \  (\Spf(V), N)   \\ 
}
\]  
and by proposition 2.2.13 of \cite{Sh1} we know that \'etale locally over $\f{T}$ there exists a morphism $c:(\f{T},L)\rightarrow (\f{U},M)$ that is a section for $\phi:(\f{U},M)\rightarrow(\f{T},L)$.\\ 
If we have $\f{T}'$ \'etale over $\f{T}$, then we call $s:(\f{T'},L)\rightarrow (\hat{X},M)$
the composition of the section $c$ with the morphism $(\f{U},M)\rightarrow (\hat{X},M)$.\\
Then we can define a sheaf over $\f{T}'$ using the pullback map $ s^{*}_{K}: \mathrm{Coh}(K \otimes \mathcal{O}_{\hat{X}})\rightarrow \mathrm{Coh}(K \otimes \mathcal{O_{\f{T}'}})$, and we call the resulting sheaf $\hat{E}_{\f{T}'}=s_K^{*}\hat{E}$. Let us note that $E_{\f{T}'}$ is obviously a locally free isocoherent sheaf on $\f{T'}$, as soon as $\hat{E}$ is.\\
If we have two sections $c$ and $d$ and respectively two morphisms $s$ and $t$, the formally log smooth morphism $s\times t: (\f{T}',L)\rightarrow (\hat{X},M)\times_{(\Spf(V),N)}(\hat{X},M)$ factors through the $n$-th log infinitesimal neighborhood $(\hat{X}^n,M^n)$, for some $n$:
$$s\times t: (\f{T}',L)\stackrel{u}{\longrightarrow} (\hat{X}^n,M^n)\longrightarrow (\hat{X},M)\times_{(\Spf(V),N)}(\hat{X},M),$$
because of the universal propriety of the $n$-th infinitesimal neighborhood.\\
Now, pulling back by $u$ the isomorphisms $\hat{\epsilon}_n:\0_{\hat{X}^n}\otimes \hat{E}\rightarrow \hat{E}\otimes \0_{\hat{X}^n}$ given by the stratification, we obtain an isomorphism $t_{K}^{*}\hat{E}\cong s_{K}^*\hat{E}$.
We want to descend this sheaf to $\f{T}$; this is possible using the cocycle condition and the theorem of faithfully flat descent for isocoherent sheaves of Gabber (\cite{Og} proposition 1.9.).\\
In fact, let us consider the formally \'etale morphism $(\f{T'},L)\rightarrow (\f{T},L)$ and the two projections 
\[
\xymatrix{ 
(\f{T}',L)\times_{(\f{T},L)}(\f{T}',L) \ar[d]_{p_1} \ar@{->}[0,1]^(.68){p_2} & (\f{T'},L) \\  
(\f{T}',L)  & .  \\ 
}
\]
Composing $p_i$ with the morphism $s:(\f{T}',L)\rightarrow (\hat{X},M)$ we find $\pi_i:(\f{T}',L)\times_{(\f{T},L)}(\f{T}',L)\rightarrow (\hat{X},M)$ for $i=1,2$ respectively.\\
As before we have a map
$\pi_1\times\pi_2:(\f{T},L')\times_{(\f{T},L)}(\f{T}',L)\rightarrow (\hat{X},M)\times_{(\Spf(V),N)}(\hat{X},M)$ that factors through the $n$-th log infinitesimal neighborhood and we can deduce an isomorphism $\pi_{1,K}^{*}\hat{E}\cong \pi_{2,K}^{*}\hat{E}$ and consequently an isomorphism $p_{1,K}^{*}\hat{E}_{\f{T}'}\cong p_{2,K}^{*}\hat{E}_{\f{T}'}$, that is a covering datum; to obtain a descent datum we adapt the above argument using the cocycle condition of the stratification.
Now using proposition 1.9 of \cite{Og} we are allowed to descend the sheaf $\hat{E}_{\f{T}'}$ to an isocoherent sheaf on $\f{T}$, that we call $\hat{E}_{\f{T}}$.\\
Let us note that $\hat{E}_{\f{T}}$ defines a log isocrystal on the infinitesimal site: to check property (ii) of definition \ref{loginf} we can use the same arguments as before.\\
Now we prove that $\hat{E}_{\f{T}}$ is a locally free isocoherent module on $\f{T}$. \\
We know that $\hat{E}_{\f{T}}$ is an isocoherent module on $\f{T}$, isomorphic to $K\otimes F$, for some coherent sheaf $F$ of $\mathcal{O}_{\f{T}}$-modules, and that there exists an \'etale covering of $\f{T}$ such that the $\hat{E}_{\f{T}}$, restricted to every element of the covering, is a locally free isocoherent module. We can restrict ourselves to the affine case; we assume that $\f{T}=\Spf(A)$, $\f{T}'=\Spf(B)$ and $\Spf(B)\rightarrow \Spf(A)$ \' etale surjective. We can conclude applying the following lemma.
\end{proof}

\begin{lemma}\label{Milne}
Let $A$ and $B$ be commutative noetherian $V$-algebras and let $M$ be a finitely generated $A\otimes K=A_K$-module.  
If we have a map $A\rightarrow B$ that is faithfully flat and $B_K\otimes_{A_K} M$ is projective, then $M$ is a projective $A_K$-module. 
\end{lemma}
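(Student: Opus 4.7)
The plan is to reduce the question to faithfully flat descent of flatness, then invoke the standard fact that over a noetherian ring, finitely generated flat modules are projective.

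First I would verify that the map $A_K \to B_K$ is itself faithfully flat. Since $B_K = B \otimes_V K = B \otimes_A (A \otimes_V K) = B \otimes_A A_K$, the morphism $A_K \to B_K$ is the base change of $A \to B$ along $A \to A_K$. Faithful flatness is preserved under arbitrary base change, so the hypothesis on $A \to B$ gives that $A_K \to B_K$ is faithfully flat. Moreover, $A_K = A \otimes_V K$ is a localization of the noetherian ring $A$ (inverting $\pi$), so $A_K$ is noetherian; hence the finitely generated $A_K$-module $M$ is finitely presented.

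Next I would apply descent of flatness. By hypothesis $B_K \otimes_{A_K} M$ is projective, hence flat, over $B_K$. Faithful flatness of $A_K \to B_K$ then implies $M$ is flat over $A_K$: for any injection of $A_K$-modules $N' \hookrightarrow N$, one checks that the sequence
\[
0 \to N' \otimes_{A_K} M \to N \otimes_{A_K} M
\]
becomes exact after tensoring with $B_K$ over $A_K$, using flatness of $B_K$ over $A_K$ and flatness of $M \otimes_{A_K} B_K$ over $B_K$; faithful flatness of $B_K/A_K$ then forces exactness of the original sequence.

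Finally, since $M$ is a finitely presented flat module over the noetherian ring $A_K$, it is projective. This last implication is standard: flatness implies local freeness at every prime of $A_K$ (finitely generated flat over a local noetherian ring is free), and a finitely presented locally free module is projective. There is no serious obstacle here; the only subtlety is verifying at the outset that passing from $V$-algebras to their generic fibers preserves faithful flatness, which is clean because it is just a base change.
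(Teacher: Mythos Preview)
Your proof is correct, but it takes a different route from the paper's own argument. The paper argues directly via the characterization of projectivity by $\mathrm{Ext}$-vanishing: using flatness of $B_K$ over $A_K$ and the finite presentation of $M$, one has the base-change isomorphism
\[
B_K\otimes_{A_K}\mathrm{Ext}^i_{A_K}(M,N)\cong \mathrm{Ext}^i_{B_K}(B_K\otimes_{A_K}M,\,B_K\otimes_{A_K}N)
\]
for all $i\ge 0$ and all $A_K$-modules $N$; projectivity of $B_K\otimes_{A_K}M$ kills the right-hand side for $i\ge 1$, and faithful flatness then forces $\mathrm{Ext}^i_{A_K}(M,N)=0$, so $M$ is projective. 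By contrast, you descend \emph{flatness} first and then invoke the fact that finitely presented flat modules over a noetherian ring are projective. Both arguments rely on the same underlying inputs (faithful flatness of $A_K\to B_K$ by base change, and finite presentation of $M$ from the noetherian hypothesis); the paper's approach is slightly more direct in that it reaches projectivity in one step, while yours is a bit more elementary in that it avoids any homological algebra beyond the definition of flatness.
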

\begin{proof}
We have the following isomorphism for every $A_K$-module $N$:
$$B_K\otimes_ {A_K} \mathrm{Ext}^i_{A_K}(M,N) \cong \mathrm{Ext}^i_{B_K}(B_K \otimes_{A_K} M,B_K \otimes_{A_K} N), $$
for every $i\geq 0 $, because $B_K$ is flat over $A_K$.
We can conclude that $M$ is projective because $B_K \otimes_{A_K} M$ is assumed to be projective. 
\end{proof}
It is true also the viceversa of proposition \ref{primolibero}.
\begin{proposition}\label{secondolibero}
If $\mathcal{E}$ is a log infinitesimal locally free isocrystal, then there exists  an object $(E, \nabla)$ $\in$ $MIC((X_K,M)/(K,N))^{lf}$ such that $\Psi ((E, \nabla))=\mathcal{E}.$ 
\end{proposition}
\begin{proof}
From proposition \ref{derhaminf} we know that there exists an element $(E,\nabla)$ in the category $MIC((X_K,M)/(K,N))$ such that $\Psi((E,\nabla))=\mathcal{E}$. \\
So we have to show that $E$ is a locally free $\mathcal{O}_{X_{K}}$-module.
By proposition \ref{primolibero} we are reduced to prove that the equivalence of categories 
\begin{equation}\label{GAGA}
j:\textrm{Coh}(\mathcal{O}_{X_K})\rightarrow \textrm{Coh}(K\otimes \mathcal{O}_{\hat{X}})
\end{equation}
behaves well with respect to locally free objects, in particular that if $\mathcal{F}$ is a locally free isocoherent module then there exists a locally free sheaf of $\mathcal{O}_{X_K}$-modules $F$ such that $j(F)=\mathcal{F}.$ \\
Let us take $F$ $\in $ Coh$(\mathcal{O}_{X_K})$ such that $\mathcal{F}=j(F)$. We claim that if $F$ is not locally free, then $\mathcal{F}$ is not locally free. Let us assume that $F$ is not locally free. So there exists an open affine, that we can suppose local, $U=\textrm{Spec}(A)$ $\subseteq$ $X_K$ such that $F|_{U}$ is not flat on $U$. By definition of flatness there exists a coherent ideal $\mathcal{I}$ of $A$ such that $\mathcal{I}\otimes F|U\rightarrow F$ is not injective. Let us take a coherent ideal $\mathcal{I}'$ of $\mathcal{O}_{X_K}$ that extends $\mathcal{I}$ (\cite{Ha} ex. II 5.15). Then $\mathcal{I}'\otimes F \rightarrow F$ is not injective. Therefore, since the functor $j$ is faithful, exact and compatible with tensor products the map $j(\mathcal{I}')\otimes \mathcal{F}=j(\mathcal{I}'\otimes F)\rightarrow j(F)=\mathcal{F}$ is not injective. So the functor 
$$-\otimes \mathcal{F}:\textrm{Coh}(K\otimes \mathcal{O}_{\hat{X}})\rightarrow \textrm{Coh}(K\otimes \mathcal{O}_{\hat{X}})$$
is not an exact functor and then $\mathcal{F}$ is not locally free.
\end{proof}

Now, as in \cite{Sh1} paragraph 5.2, we construct a functor 
$$\Phi : I_{conv}((\hat{X},M)/(\Spf(V),N))\longrightarrow I_{inf}((\hat{X},M)/(\Spf(V),N)).$$
We remark that the functor $\Phi$ and its restriction to locally free objects, $\tilde{\Phi}$, that we will mention below, can be constructed for $(\f{X},M)\rightarrow(\f{S},N)$, morphism of fine log formal schemes. \\
Let $(\f{U},\f{T},L, \phi)$ be an object of the infinitesimal site, define an enlargement $\Phi ^*(\f{T})=(\f{T},L,z:(\f{T}_0,L) \cong (\f{U}_0,M)\rightarrow (\hat{X},M))$; this is clearly an element of the log convergent site (the isomorphism $(\f{T}_0, M) \cong (\f{U}_0, L)$ follows from the fact that immersion $(\f{U}, M)\rightarrow (\f{T}, L)$ is nilpotent exact closed immersion.) Let us observe that $\Phi ^*(\hat{X})=(\hat{X}, M, z:(X_k, M)\rightarrow (\hat{X},M)).$ \\
If we have an isocrystal $\mathcal{E}$ on the log convergent site we define 
$$\Phi(\mathcal{E})_{\f{T}}=\mathcal{E}_{\Phi^*(\f{T})}.$$  
We have already seen in proposition \ref{primolibero} and \ref{secondolibero} that $\tilde{\Psi}$, the restriction of the functor $\Psi$ to the locally free objects 
$$\tilde{\Psi} :MIC((X_K, M)/(\Spf(V),N))^{lf} \longrightarrow I_{inf}((\hat{X},M)/(\Spf(V),N))^{lf},$$
is well-defined and is an equivalence of categories. Now we want to show that also $\tilde{\Phi}$, the restriction of the functor $\Phi$ to the locally free objects
$$\tilde{\Phi} : I_{conv}((\hat{X},M)/(\Spf(V),N))^{lf}\longrightarrow I_{inf}((\hat{X},M)/(\Spf(V),N))^{lf},$$
is well-defined. In particular we have the following lemma.
\begin{proposition}\label{secondoliberoperconv}
If $\mathcal{E}$ is a log convergent isocrystal, such that $\Phi(\mathcal{E})$ is a locally free log infinitesimal isocrystal, then $\mathcal{E}$ is also locally free. 
\end{proposition}
\begin{proof}
We can evaluate $\mathcal{E}$ at the enlargement $(\hat{X},M, z:(X_k,M)\rightarrow (\hat{X},M))$ and we find that $\mathcal{E}_{\hat{X}}=\mathcal{E}_{\Phi^*(\hat{X})}=\Phi(E)_{\hat{X}}$; so $\mathcal{E}_{\hat{X}}$ is a locally free isocoherent module. We are going to prove that $\mathcal{E}_{\f{T}}$ is a locally free isocoherent sheaf for every enlargement $\f{T}$. Now taking an enlargement $(\f{T},L,z:(\f{T}_0,L)\rightarrow (\hat{X},M))$, we have the following commutative diagram of log formal schemes
\[
\xymatrix{ 
\ (\f{T}_0,L)\ \ar[r]^{z} \ar[d]^{i} &\ (\hat{X},M)\  \ar[d]^{f} \\  
\ (\f{T},L)\  \ar[r]^{b}& \  (\Spf (V), N).  \\ 
}
\]  
As $f$ is formally log smooth, we know that \'etale locally on $\f{T}$ there is a morphism $c:(\f{T},L)\rightarrow (\hat{X},M)$ such that $c\circ i =z$ and $f \circ c=b$. \\
So let us consider an open for the \'etale topology: $\f{T}'$  formally \'etale over $\f{T}$ and we call again $c$ the morphism $(\f{T}',L)\rightarrow (\hat{X},M)$ induced by the diagram above, where as usual with $L$ we denote the pullback of $L$ by the \'etale map between $\f{T}$ and $\f{T}'$. We construct the enlargement $\f{T}'=(\f{T}',L,z:(\f{T}'_{0},L)\rightarrow(\hat{X},M))$; the morphism $c$ clearly extends to a morphism of enlargements. So by definition of isocrystal we have an isomorphism $c^{*}(\mathcal{E}_{\hat{X}})\cong \mathcal{E}_{\f{T'}}$, from which we know that $\mathcal{E}_{\f{T'}}$ is a locally free isocoherent module.\\
Then we consider the formally \'etale morphism $u:(\f{T}', L)\rightarrow (\f{T},L)$ that extends to a morphism of enlargements; again by definition of isocrystal we have $u^{*}(\mathcal{E}_{\f{T}})\cong \mathcal{E}_{\f{T'}}$. We know that $\mathcal{E}_{\f{T'}}$ is a locally free isocoherent module and we want to prove that $\mathcal{E}_{\f{T}}$ is locally free: we can proceed as in the last part of proposition \ref{primolibero} and conclude.  
\end{proof}
From the definition of the functor $\Phi$ we see also the viceversa of proposition \ref{secondoliberoperconv}; indeed if $\mathcal{E}$ is a locally free log convergent isocrystal, then for every element $\mathscr{T}$ on the log infinitesimal site 
$$\Phi(\mathcal{E})_{\mathscr{T}}=\mathcal{E}_{\Phi^*(\mathscr{T})},$$
by definition of $\Phi$.\\
So we can compose the functors $\tilde{\Phi}$ and $\tilde{\Psi}^{-1}$ and obtain a well defined functor  
$$i:I_{conv}((\hat{X},M)/(\Spf(V),N))^{lf}\rightarrow MIC((X_K, M)/(\Spf(V),N))^{lf}.$$
Our next goal is to prove that $\tilde{\Phi}$ is fully faithful, showing first that this can be proved  \'etale locally.  
\begin{proposition}\label{discesaperi}
If $\tilde{\Phi}:I_{conv}(\hat{X}/\Spf(V))^{log,lf}\rightarrow I_{inf}(\hat{X}/\Spf(V))^{log,lf}$ is fully faithful \'etale locally on $\hat{X}$, then $\tilde{\Phi}:I_{conv}(\hat{X}/\Spf(V))^{log,lf}\rightarrow I_{inf}(\hat{X}/\Spf(V))^{log,lf}$ is fully faithful.
\end{proposition}
\begin{proof}
We suppose that $\coprod_j \hat{X}_j =\hat{X}^e\rightarrow \hat{X}$ is an \'etale covering of $\hat X$. If $\hat{X}'=\hat{X}^e\times_{\hat{X}} \hat{X}^e$ and $\hat{X}''=\hat{X}^e\times_{\hat{X}} \hat{X}^e\times_{\hat{X}} \hat{X}^e$, we have the following diagram:
\[
\xymatrix{ 
\ I_{conv}(\hat{X}/\Spf(V))^{log, lf}\ \ar[r]^{\tilde{\Phi}} \ar[d] &\ I_{inf}(\hat{X}/\Spf(V))^{log, lf}\  \ar[d] \\  
\ I_{conv}(\hat{X}^e/\Spf(V))^{log, lf}\ \ar[r]^{\tilde{\Phi}^{e}} \ar[d] \ar@<1ex>[d] &\ I_{inf}(\hat{X}^e/\Spf(V))^{log, lf}\  \ar[d] \ar@<1ex>[d]\\  
\ I_{conv}(\hat{X}'/\Spf(V))^{log, lf}\ \ar[r]^{\tilde{\Phi}'} \ar[d] \ar@<1ex>[d] \ar@<-1ex>[d]&\ I_{inf}(\hat{X}'/\Spf(V))^{log, lf}\  \ar[d] \ar@<1ex>[d]\ar@<-1ex>[d]\\  
\ I_{conv}(\hat{X}''/\Spf(V))^{log, lf}\ \ar[r]^{\tilde{\Phi}''} &\ I_{inf}(\hat{X}''/\Spf(V))^{log, lf} \\ 
}
\]  
where the vertical arrows are induced by the following morphisms:
$$\hat{X}^e\rightarrow \hat{X},$$
$$p_i :\hat{X}'=\hat{X}^e\times_{\hat{X}} \hat{X}^e\rightarrow \hat{X}^e \,\,\,\textrm{for} \,\,\,i=1,2,$$
$$p_{i,j}:\hat{X}''=\hat{X}^e\times_{\hat{X}} \hat{X}^e\times_{\hat{X}} \hat{X}^e\rightarrow \hat{X}^e\times_{\hat{X}}\hat{X}^e\,\,\, \,\,\,\textrm{for} 1\leq i < j \leq 3$$
where the first is the \'etale morphism defining the \'etale cover, $p_i$ and $p_{i,j}$ are the natural projections.\\
Thanks to \'etale descent for log convergent isocrystals (\cite{Sh1} remark 5.1.7), giving $\mathcal{E}$, a locally free log convergent isocrystal on $\hat{X}$ is the same as giving $\mathcal{E}^{e}$, a locally free log convergent isocrystal on $\hat{X}^e$ and $\alpha_{\mathcal{E}}$ isomorphism between $p_2^*\mathcal{E}^{e}\rightarrow p_1^* \mathcal{E}^{e}$, compatible with the usual cocycle conditions.\\
A morphism $f$ from $\mathcal{E}$ to $\mathcal{F}$ is the same as a morphism $f^{e}$ from $\mathcal{E}^{e}$ and $\mathcal{F}^{e}$ that satisfies 
$\alpha_{\mathcal{F}}\circ p^*_2 f^e = p_1^*f^e \circ \alpha_{\mathcal{E}}$ and the compatibility conditions given by cocycle conditions on $\hat{X}''$. \\
By hypothesis $\tilde{\Phi}^{e}$, $\tilde{\Phi}'$ and $\tilde{\Phi}''$ are fully faithful, so $f^{e}$ induces a unique morphism $\tilde{\Phi}^{e}(f^e)$ between $\tilde{\Phi}^{e}(\mathcal{E}^{e})\rightarrow \tilde{\Phi}^{e}(\mathcal{F}^{e})$ that satisfies the same compatibility conditions on $\hat{X}'$ and $\hat{X}''$. Moreover this association is surjective.\\
Using \'etale descent for log infinitesimal isocrystals proven in \cite{Sh1} remark 3.2.20 we can descend $\tilde{\Phi}^e(f^e)$ to a morphism which coincides with $\tilde{\Phi}(f)$ between $\tilde{\Phi}(\mathcal{E})$ and $\tilde{\Phi}(\mathcal{F})$. 
\end{proof}

Before proving the full faithfulness \'etale locally we recall the construction of the universal enlargement and of convergent stratifications given by Shiho in \cite{Sh1} paragraph 5.2. \\
Let $\ell:(\mathscr{X},M)\hookrightarrow (\mathscr{Y},M')$ be an exact closed immersion of $p$-adic log formal $V$-schemes over $(\f{S},N)$ defined by a sheaf of ideals $\mathcal{I}.$ 
We need a more general notion of enlargement.
\begin{definition} 
We say that the quadruple $(\mathscr{T},L,z,g)$ is an enlargement of $(\mathscr{X},M)$ in $(\mathscr{Y},M')$ if $(\mathscr{T},L,z)$ is an enlargement of $(\mathscr{X},M)$ and $g$ is an $(\mathscr{S},N)$ morphism $(\mathscr{T},L)\rightarrow (\mathscr{Y},M')$ such that the following diagram is commutative
\[
\xymatrix{
(\f{T}_0,L) \ar[d]^{z} \ar[r]^{g_0}  & (\f{Y}_0,M')  \ar[d] \\  
(\f{X},M)\ar@{^{(}->}[r]^{\ell} &   (\f{Y}, M')   \\
}
\]
\end{definition}
We call $\mathcal{B}_{n,\mathscr{X}}(\mathscr{Y})$ the formal blow up of $(\mathscr{Y}, M')$ with respect to $p\mathcal{O}_{\mathscr{Y}}+\mathcal{I}^{n+1}$  and we denote by $T_{n,\mathscr{X}}(\mathscr{Y})$ the open of $\mathcal{B}_{n,\mathscr{X}}(\mathscr{Y})$ defined by $p$:
$$T_{n,\mathscr{X}}(\mathscr{Y}):=\{x \in \mathcal{B}_{n,\mathscr{X}}(\mathscr{Y})|\,\,(p\mathcal{O}_{\mathscr{Y}}+\mathcal{I}^{n+1})\mathcal{O}_{\mathcal{B}_{n,\mathscr{X}}(\mathscr{Y}),x}=p\mathcal{O}_{\mathcal{B}_{n,\mathscr{X}}(\mathscr{Y}),x}\}.$$
We put on $T_{n,\mathscr{X}}(\mathscr{Y})$ the log structure $L_{n,\mathscr{X}}(\mathscr{Y})$ induced by the pull-back of $M'.$
\begin{remark}
We can see from \cite{Og} proposition 2.3 that if $\mathscr{Y}=\Spf (A)$ and $\mathcal{I}^{n+1}=(a_1,\dots a_m)$ then $T_{n,\mathscr{X}}(\mathscr{Y})=\Spf (A\{t_1, \dots t_m\}/(pt_1-a_1,\dots,pt_m-a_m)$ modulo $p$-torsion).
For a local description of the formal blow up see \cite{Bo} paragraph 2.6.
\end{remark}
The map $(T_{n,\mathscr{Xz}}(\mathscr{Y}))_0\rightarrow (\f{Y})_0$ factors through $\f{X}_0$, so that we can equip the pair $(T_{n,\mathscr{X}}(\mathscr{Y}),L_{n,\mathscr{X}}(\mathscr{Y})) $ with two maps $(z_n,t_n)$ in such a way that the quadruple $((T_{n,\mathscr{X}}(\mathscr{Y}),L_{n,\mathscr{X}}(\mathscr{Y}),z_n,t_n)$ is an enlargement of $(\f{X},M)$ in $(\f{Y},M')$ and the set $\{(T_{n,\mathscr{X}}(\mathscr{Y}),L_{n,\mathscr{X}}(\mathscr{Y}), z_n, t_n)\}_{n\in \mathbb{N}}$ is an inductive system of enlargements that is universal in the sense that for every enlargement $(T',L',z',t')$ of $(\mathscr{X},M)$ in $(\mathscr{Y},M')$ there exists a unique morphism to the inductive system given by $\{(T_{n,\mathscr{X}}(\mathscr{Y}),L_{n,\mathscr{X}}(\mathscr{Y}), z_n, t_n)\}_{n\in \mathbb{N}}$ (proposition 5.2.4 of \cite{Sh1}).\\

The fiber product of fine log formal schemes is a log formal scheme that is not necessarily fine, but, thanks to proposition 2.1.6 of \cite{Sh1}, there is a functor $(-)^{int}$ that sends a log formal scheme to a fine log formal scheme that is right adjoint to the natural inclusion of fine formal schemes in the category of log formal schemes.
If $f:(\f{X},M)\rightarrow (\f{S},N)$ is a morphism of fine formal schemes, we denote the fiber product in the category of log formal schemes of $(\f{X},M)$ and $(\f{X},M)$ over $(\f{S},N)$ by $(\f{X},M)\times_{(\f{S},N)}(\f{X},M)$ (resp. the fiber product in the category of log formal schemes of $(\f{X},M)$ with itself three times over $(\f{S},N)$ with $(\f{X},M)\times_{(\f{S},N)}(\f{X},M)\times_{(\f{S},N)}(\f{X},M)$) and the fine log formal scheme associated to this with $((\f{X},M)\times_{(\f{S},N)}(\f{X},M))^{int}$ (resp. $((\f{X},M)\times_{(\f{S},N)}(\f{X},M)\times_{(\f{S},N)}(\f{X},M))^{int}$).\\
We want to construct a log formal scheme that  we will indicated by $(\f{X}(1),M(1))$ (resp. $(\f{X}(2),M(2))$) such that it factors the diagonal embedding $\Delta^{int} : (\f{X},M) \rightarrow ((\f{X},M)\times_{(\f{S},N)}(\f{X},M))^{int}$
(resp. $\Delta^{int} : (\f{X},M) \rightarrow ((\f{X},M)\times_{(\f{S},N)}(\f{X},M)\times_{(\f{S},N)}(\f{X},M))^{int}$) in an exact closed immersion $(\f{X},M)\hookrightarrow (\f{X}(1),M(1))$ (resp. the closed immersion $(\f{X},M)\hookrightarrow (\f{X}(2),M(2))$) followed by a formally log \'etale morphism: following \cite{Sh1} or \cite{Ka} proposition 4.10, we can do this if the morphism $f:(\f{X},M)\rightarrow (\f{S},N)$ has a global chart.\\
We indicate with $(P_{\f{X}}\rightarrow M, Q_V\rightarrow N, Q\rightarrow P )$ a chart of $f$, with $\alpha(1)$ (resp. $\alpha(2)$) the homomorphism induced by the map $P\oplus_Q P\rightarrow P$ (resp. $P\oplus_Q P \oplus_Q P \rightarrow P$) and with $R(1)$ the set $(\alpha(1)^{gp})^{-1}(P)$ (resp. with $R(2)$ the set $(\alpha(2)^{gp})^{-1}(P)$).\\
With this notation we define $\f{X}(1)=(\f{X}\times_{\f{S}}\f{X})\times_{\Spf (\mathbb{Z}_p\{P\oplus_Q P\})}\Spf({\mathbb{Z}_p\{R(1)\}})$ (resp. $\f{X}(2)=(\f{X}\times_{\f{S}}\f{X}\times_{\f{S}}\f{X})\times_{\Spf (\mathbb{Z}_p\{P\oplus_Q P\oplus_Q P\})}\Spf({\mathbb{Z}_p\{R(2)\}})$) equipped with the log structure $M(1)$ (resp $M(2)$) defined as the log structure induced by the canonical log structure on $\Spf({\mathbb{Z}_p}\{R(1)\})$ (resp. on  $\Spf({\mathbb{Z}_p}\{R(2)\})$).
Thanks to proposition 4.10 of \cite{Ka} we have that $(\f{X}(1),M(1))$ (resp. $(\f{X}(2),M(2))$) factors the diagonal embedding as we wanted.\\
Using the fact that $(\f{X},M)\hookrightarrow (\f{X}(i),M(i))$ are exact closed immersions of log formal schemes
for $i=1,2$, we define $\{(T_{\f{X},n}(\f{X}(i)), L_{\f{X},n}(\f{X}(i)),z_n(i),t_n(i))\}_{n\in \mathbb{N}}$ which is the universal system of enlargements associated to this closed immersions. For simplicity of notation we will denote by $(T_n(i),L_n(i))$ the $n$-th universal enlargement $(T_{\f{X},n}(\f{X}(i)), L_{\f{X},n}(\f{X}(i)),z_n(i),t_n(i))$.\\
The natural maps
$$p_i:(\f{X},M)\times_{(\f{S},N)}(\f{X},M)\rightarrow(\f{X},M)\;\;\;\;\;\;\;\;\;\textrm{for}\,\,\,i=1,2,$$
$$p_{i,j}:(\f{X},M)\times_{(\f{S},N)}(\f{X},N)\times_{(\f{S},N)}(\f{X},N)\rightarrow(\f{X},N)\times_{(\f{S},N)}(\f{X},N)\;\;\;\;\;\;$$
$$ \textrm{for}\;\; 1\leq i < j\leq 3$$
$$\Delta: (\f{X},M) \rightarrow (\f{X},M)\times_{(\f{S},N)}(\f{X},M)$$
induce compatible morphisms of enlargements:
$$q_{i;n}:(T_n(1),L_n(1))\rightarrow (\f{X},M)$$
$$q_{i,j;n}: (T_n(2),L_n(1))\rightarrow (T_n(1),L_n(1))$$
$$\Delta_n: (\f{X},M)\rightarrow (T_n(1),L_n(1)).$$
With the same notation we can give the following definition.
\begin{definition}\label{convstra}
A log convergent stratification on $(\f{X},M)$ is an isocoherent sheaf $\mathcal{E}_{\f{X}}$ on $\f{X}$  and a compatible family of isomorphisms 
$$\epsilon_{n}:q_{2;n}^*\mathcal{E}_{\f{X}}\rightarrow q_{1;n}^*\mathcal{E}_{\f{X}}$$
such that every $\epsilon_n$ satisfies 
$$\Delta_n^*(\epsilon_n)=\textrm{id};$$
$$q_{1,2;n}^*(\epsilon_n)\circ q_{2,3;n}^*(\epsilon_n)= q_{1,3;n}^*(\epsilon_n).$$
We denote the category of log convergent stratifications by $Str'((\f{X}, M)/(\f{S},N)).$
\end{definition}
As in the case of log infinitesimal isocrystals we can establish an equivalence of categories between log convergent stratifications and convergent log  isocrystals: this is the statement of proposition 5.2.6 of \cite{Sh1}.
\begin{proposition}\label{stratconvisoconv}
If the log formal scheme $(\f{X},M)$ is formally log smooth over the log formal scheme $(\Spf(V),N)$, then the category $I_{conv}((\f{X},M)/(\Spf(V),N))$ is equivalent to $Str'((\f{X},M)/(\Spf(V),N))$.
\end{proposition} 
 
Now that we have introduced all the machinery we can finish the proof of full faithfulness of the functor $\tilde{\Phi}$. We need the following result, whose proof is essentially the same as the one of proposition 5.2.9 of \cite{Sh1}.\\

\begin{theorem}\label{ff}
The functor 
$$\tilde{\Phi} : I_{conv}((\hat{X},M)/(\Spf(V),N))^{lf}\longrightarrow I_{inf}((\hat{X},M)/(\Spf(V),N))^{lf}$$
is fully faithful \'etale locally.
\end{theorem} 
\begin{proof}
Since fine log formal schemes have charts \'etale locally and the statement is of \'etale local nature, we can suppose that the morphism $f:(\hat{X},M)\rightarrow (\Spf(V),N)$ has a chart globally.
For $\mathcal{E}$ and $\mathcal{F}$ in $I_{conv}((\hat{X},M)/(\Spf(V),N))^{lf}$, we can define 
$\mathcal{H}om(\mathcal{E}, \mathcal{F})$ $\in$ $I_{conv}((\hat{X},M)/(\Spf(V),N))^{lf}$ by
$$\mathcal{H}om(\mathcal{E}, \mathcal{F})_{\f{T}}=\mathcal{H}om(\mathcal{E}_{\f{T}}, \mathcal{F}_{\f{T}}).$$
The global sections of $\mathcal{H}om(\mathcal{E}, \mathcal{F})$ are isomorphic to Hom$(\mathcal{E},\mathcal{F})$.
The same holds for $I_{inf}((\hat{X},M)/(\Spf(V),N))^{lf}$.\\
So we are reduced to prove that there is an isomorphism 
$$H^0((\hat{X},M)/(\Spf(V),N)_{conv},\mathcal{E})\longrightarrow H^{0}((\hat{X},M)/(\Spf(V),N)_{inf},\tilde{\Phi}(\mathcal{E}))$$
for every $\mathcal{E}$ in  $I_{conv}((\hat{X},M)/(\Spf(V),N))^{lf}$.\\
As we noticed before the morphism $f:(\hat{X}, M)\rightarrow (\Spf(V),N)$ has a chart globally and so we can construct the scheme $(\hat{X}(1),(M)(1))$ that we described above.\\
The equivalence in proposition \ref{stratconvisoconv} associates to $\mathcal{E}$ $\in$ $I_{conv}((\hat{X},M)/(\Spf(V),N))^{lf}$ a log convergent stratification  $(\mathcal{E}_{\hat{X}},\epsilon_n)$ given by a locally free isocoherent sheaf $\mathcal{E}_{\hat{X}}$ on $\hat{X}$ and isomorphisms
$$\epsilon_n:(K\otimes \mathcal{O}_{T_n(1)})\otimes \mathcal{E}_{\hat{X}}\longrightarrow \mathcal{E}_{\hat{X}}\otimes (K\otimes \mathcal{O}_{T_n(1)})$$
for all $n$.\\
So the set $H^0((\hat{X},M)/(\Spf(V),N)_{conv},\mathcal{E})$ can be characterized in terms of log convergent stratifications as follows:
$$H^0((\hat{X},M)/(\Spf(V),N)_{conv},\mathcal{E})=\{\,\,e \,\,\in \,\, \Gamma(\hat{X},\mathcal{E}_{\hat{X}})|\,\,\,\epsilon_n(1\otimes e)=e\otimes 1\,\,\forall n\}.$$ 
To see this let us remember how Shiho (\cite{Sh1} prop 5.2.6) associates to a log convergent isocrystal $\mathcal{E}$ a convergent stratification $(\mathcal{E}_{\hat{X}}, \epsilon_n)$: to define the isomorphism $\epsilon_n:q_{2,n}^{*}\mathcal{E}_{\hat{X}}\rightarrow q_{1,n}^{*}\mathcal{E}_{\hat{X}}$, he uses the fact that $q_{2,n}^{*}\mathcal{E}_{\hat{X}}\cong \mathcal{E}_{T_n(1)}\cong q_{1,n}^{*}\mathcal{E}_{\hat{X}}$, \emph{i.e.} the fact that $\mathcal{E}$ is an isocrystal. A global section of the log convergent isocrystal $\mathcal{E}$ is a collection of $(\varphi_{\f{T}})_{\mathscr{T}\in (\hat{X}/\Spf(V))^{log}_{conv}}$ with $\varphi_{\f{T}}\in \mathcal{E}_{\f{T}}(\f{T})$, with the property that if there is a morphism of enlargements $\f{T}\rightarrow \f{T}'$, then $\varphi_{\f{T}'}$ is sent to $\varphi_{\f{T}}.$ Hence if $(e_{\f{T}})_{\f{T}}$ is a global section and we send it to $e_{\hat{X}}\in \Gamma(\hat{X}, \mathcal{E}_{\hat{X}})$ then $e_{\hat{X}} $ is such that $\epsilon_n(1\otimes e_{\hat{X}})=e_{\hat{X}} \otimes 1$ for every $n$: this last condition is equivalent to say that a global section of $\mathcal{E}$ is compatible with $q_{i,n}:T_n(1)\rightarrow \hat{X}$ for $i=1,2.$ Moreover let us take $e_{\hat{X}}$ which verifies that  $\epsilon_n(1\otimes e_{\hat{X}})=e_{\hat{X}} \otimes 1$ for every $n$, this means than one can define $e_{T_n(1)}$ for every $n$ in a compatible way with respect to the map $q_{i,n}:T_n(1)\rightarrow \hat{X}$ for $i=1,2;$ using the universality of $T_n(1)$ one can construct a global section of $\mathcal{E}$ (look again at proof of proposition 5.2.6 of \cite{Sh1}). \\  
Let $J$ be the sheaf of ideals that defines the closed immersion $ \hat{X}\hookrightarrow \hat{X}(1)$; we denote by $\mathcal{O}_{\hat{X}(1)^{an}}$ the sheaf $\varprojlim_m K\otimes \mathcal{O}_{\hat{X}(1)}/J^m$. By proposition 5.2.7 (2) of \cite{Sh1} we know that there is an injective map $K\otimes \mathcal{O}_{T_n(1)}\rightarrow \mathcal{O}_{\hat{X}(1)^{\textrm{an}}}$. If we tensor the isomorphisms $\epsilon_n$ of the convergent stratification $(\mathcal{E}_{\hat{X}},\epsilon_n)$ with this map we obtain a map
$$\epsilon': \mathcal{O}_{\hat{X}(1)^{\textrm{an}}}\otimes \mathcal{E}_{\hat{X}}\longrightarrow \mathcal{E}_{\hat{X}}\otimes \mathcal{O}_{\hat{X}(1)^{\textrm{an}}}$$
that coincides with the limit of the isomorphisms of the stratification induced by $\tilde{\Phi}(\mathcal{E})$ through the equivalence of categories 
$$I_{inf}((\hat{X},M)/(\Spf(V),N))\cong \widehat{Str}((\hat{X},M)/(\Spf(V),N)).$$
So we can characterize the set $H^0((\hat{X},M)/(\Spf(V),N)_{inf}$ as follows
$$H^0((\hat{X},M)/(\Spf(V),N)_{inf}, \tilde{\Phi}(\mathcal{E}))=\{\,\,e \,\,\in \,\, \Gamma(\hat{X},\mathcal{E}_{\hat{X}})|\,\,\,\epsilon '(1\otimes e)=e\otimes 1\}.$$ 
This means that the claim is reduced to prove that the following diagram
\begin{equation}\label{primotriangolo}
\xymatrix{ 
\mathcal{O}_{\hat{X}(1)^{\textrm{an}}}\otimes \mathcal{E}_{\hat{X}} \ar[rr]^{\epsilon '}  &  & 
  \mathcal{E}_{\hat{X}}\otimes \mathcal{O}_{\hat{X}(1)^{\textrm{an}}} \\  
 & \mathcal{E}_{\hat{X}}
 \ar[ul] \ar[ur] &   \\
}
\end{equation}
 is commutative if and only if this is commutative
\begin{equation}\label{secondotriangolo}
\xymatrix{ 
(K\otimes \mathcal{O}_{T_n(1)})\otimes \mathcal{E}_{\hat{X}} \ar[rr]^{\epsilon_n}  &  & 
  \mathcal{E}_{\hat{X}}\otimes (K\otimes \mathcal{O}_{T_n(1)}) \\  
 & \mathcal{E}_{\hat{X}}
 \ar[ul] \ar[ur] &  . \\
}
\end{equation}
Knowing that the following diagram is commutative
\begin{equation}\label{quadrato}
\xymatrix{\mathcal{O}_{\hat{X}(1)^{\textrm{an}}}\otimes \mathcal{E}_{\hat{X}} \ar[r]^{\epsilon'}  & \mathcal{E}_{\hat{X}}\otimes \mathcal{O}_{\hat{X}(1)^{\textrm{an}}} \\  
(K\otimes \mathcal{O}_{T_n(1)})\otimes \mathcal{E}_{\hat{X}} \ar[r]^{\epsilon_n}  \ar[u] & 
  \mathcal{E}_{\hat{X}}\otimes (K\otimes \mathcal{O}_{T_n(1)}) \ar[u]\\  
}
\end{equation}
and putting together (\ref{secondotriangolo}) and (\ref{quadrato}), we can conclude that if (\ref{secondotriangolo}) is commutative then (\ref{primotriangolo}) is commutative.\\
Let us suppose instead that (\ref{primotriangolo}) is commutative: then using the fact that $\mathcal{E}_{\hat{X}}$ is flat 
and that the map $K\otimes \mathcal{O}_{T_n(1)}\rightarrow \mathcal{O}_{\hat{X}(1)^{\textrm{an}}}$ is injective we can conclude that also (\ref{secondotriangolo}) is commutative. 
\end{proof}
Shiho in \cite{Sh1} proposition 5.2.9 proves that the functor $\Phi$ is fully faithful \'etale locally when it is restricted to the nilpotent part of $I_{conv}((\hat{X},M)/(\Spf(V),N))$ and $I_{inf}((\hat{X},M)/(\Spf(V),N))$. Our proof is essentially the same, because the key property of nilpotent objects used in Shiho's proof is that the nilpotent isocrystals are locally free.\\
Putting together theorem \ref{ff} and proposition \ref{discesaperi} we obtain the following 
\begin{theorem}
The functor $\tilde{\Phi}$ is fully faithful.
\end{theorem}

\section{Characterizations of log convergent isocrystals in terms of stratifications}\label{Characterizations of log convergent isocrystals in terms of stratifications}
We want to describe the essential image of the functor $\tilde{\Phi}$. As for the case of the proof of full faithfulness it will be enough to describe it \'etale locally, then, with descent argument we can conclude as in proposition \ref{discesaperi}. So we can suppose that $(\hat{X},M)\rightarrow(\Spf(V),N)$ has a chart globally.\\
To avoid log differential calculus we will prove that we can restrict to the case of trivial log structures. We need the following lemma whose proof is essentially the same as proposition 5.2.11 of \cite{Sh1}.
\begin{proposition}\label{U denso}
Let  $f:(\hat{X},M) \rightarrow (\Spf(V),N)$ be a formally smooth morphism of fine log schemes that admits a chart. 
Let $U$ be a dense open subset of $\hat{X}$ and set $j$ the open immersion $j:U\hookrightarrow \hat{X}$. If $\mathcal{E}_i \in I_{inf}((\hat{X},M)/(\Spf(V),N))^{lf}$  there exists $\mathcal{E}_c$ in the category $I_{conv}((U,M)/(\Spf(V),N))^{lf}$ such that $\tilde{\Phi}(\mathcal{E}_c)=j^*\mathcal{E}_i$, then $\mathcal{E}_i\in$ $\tilde{\Phi}( I_{conv}((\hat{X},M)/(\Spf(V),N))^{lf}).$ 
\end{proposition}
\begin{proof}
Let us consider the sheaf 
$$\mathcal{F}=z_{n*}\mathcal{H}om((K\otimes \mathcal{O}_{T_{n}(1)})\otimes \mathcal{E}_{i,\hat{X}},\mathcal{E}_{i,\hat{X}} \otimes (K\otimes \mathcal{O}_{T_{n}(1)})).$$
The following sequence is exact:
$$0\rightarrow \mathcal{F}\xrightarrow{a} (\mathcal{F}\otimes \mathcal{O}_{\hat{X}(1)^{an}})\oplus j_*j^*(\mathcal{F})\xrightarrow{b} j_*j^*(\mathcal{F}\otimes \mathcal{O}_{\hat{X}(1)^{an}}),$$
because $\mathcal{F}$ is a projective $z_{n*}(K\otimes \mathcal{O}_{T_{n}(1)})$-module, and the following sequence is exact by proposition 5.2.8 of \cite{Sh1} and \cite{Og}  lemma 2.14
$$0\rightarrow z_{n*}(K\otimes \mathcal{O}_{T_{n}(1)})\rightarrow  \mathcal{O}_{\hat{X}(1)^{an}}\oplus j_*j^*z_{n*}(K\otimes \mathcal{O}_{T_{n}(1)})\rightarrow j_*j^* \mathcal{O}_{\hat{X}(1)^{an}}.$$
Viewing $\mathcal{E}_c$ as a convergent stratification we have a map
$$\epsilon'_n: j_*j^*z_{n*}((K\otimes \mathcal{O}_{T_{n}(1)})\otimes j^*(\mathcal{E}_{i,\hat{X}}))\rightarrow j_*j^*z_{n*} (j^*(\mathcal{E}_{i,\hat{X}})\otimes(K\otimes \mathcal{O}_{T_{n}(1)}));$$ 
on the other hand the log infinitesimal isocrystal $\mathcal{E}_i$ induces a stratification 
$$\epsilon' :\mathcal{O}_{\hat{X}(1)^{an}}\otimes \mathcal{E}_{i,\hat{X}}\rightarrow \mathcal{E}_{i,\hat{X}}\otimes \mathcal{O}_{\hat{X}(1)^{an}}.$$
The pair $(\epsilon', \epsilon'_n)$ lies in Ker$(b)$, because $b(\epsilon', \epsilon'_n)=j^{*}j_{*}\epsilon'-\epsilon'_n$ where we consider $\epsilon'_n$ as an element of   $j_*j^*(\mathcal{F}\otimes \mathcal{O}_{\hat{X}(1)^{an}})$ using the injective map $K\otimes \mathcal{O}_{T_n(1)}\rightarrow \mathcal{O}_{\hat{X}(1)^{\textrm{an}}}$.  Then there exists an $\epsilon_n$ $\in$ $\mathcal{F}$ such that $a(\epsilon_n)=(\epsilon', \epsilon'_n)$ that defines a convergent stratification, i.e. a convergent isocrystal $\bar{\mathcal{E}_c}$. Moreover one can verify that $\tilde{\Phi}(\bar{\mathcal{E}_c})=\mathcal{E}_i $.   
\end{proof}
Now we want to apply proposition $\ref{U denso}$ choosing as $U$ the subset 
$$\hat{X}_{\hat{f}-triv}=\{x\in \hat{X}\;|\;(\hat{f}^{*}N)_{\bar{x}}=(M)_{\bar{x}}\}.$$
Let us prove that it is open and dense in $\hat{X}$. Clearly $\hat{X}_{\hat{f}-triv}$ is homeomorphic to $X_{k,f_k-triv}$ so it will be sufficient to prove that $X_{k,f_k-triv}$ is open dense in $X_k$. But this follows from proposition 2.3.2 of \cite{Sh1} because the special fiber is reduced.\\
Now applying proposition \ref{U denso} we can restrict ourselves to the case in which $\hat{f}^{*}N=M$. As Shiho notices the hypothesis $\hat{f}^{*}N=M$ gives an equivalence of categories 
$$I_{conv}((\hat{X},M)/(\Spf(V),N))\cong I_{conv}((\hat{X},\textrm{triv})/(\Spf(V),\textrm{triv})),$$
where the notation triv means that the log structure is trivial and 
$$I_{inf}((\hat{X},M)/(\Spf(V),N))\cong I_{inf}((\hat{X},\textrm{triv})/(\Spf(V),\textrm{triv})).$$
So we are reduce to the case of trivial log structures, as we wanted. We will characterize the essential image using certain type of stratification that we call special.
\begin{definition}\label{special}
Let $(E,\epsilon_n)$ be an object of $\widehat{Str}(\hat{X}/\Spf(V))$ and let $\tilde {E}$ be a coherent $p$-torsion-free $\mathcal{O}_{\hat{X}}$-module such that $K\otimes \tilde{E}=E$; we say that $(E,\epsilon_n)$ is special if there exists a sequence of integers $k(n)$ for $n \in \mathbb{N}$ such that:
\begin{itemize}
\item[(i)]$k(n)=o(n)$ for $n\rightarrow \infty$,
\item[(ii)]the restriction of the map $p^{k(n)}\epsilon_n$ to $p_{2,n}^{*}(\tilde{E})$ has image contained in $p_{1,n}^{*}(\tilde{E})$ and the restriction of the map $p^{k(n)}\epsilon_n^{-1}$ to $p_{1,n}^{*}(\tilde{E})$ has image contained in $p_{2,n}^{*}(\tilde{E})$.
\end{itemize}
\end{definition}
This definition is a small modification of definition of special stratification given by Shiho (\cite{Sh1} Definition 5.2.12). Our definition of special is weaker then Shiho's definition: every special object in the sense of Shiho is special in our sense and allows us to characterize the essential image of the functor $\tilde{\Phi}$.\\
Let us see now that the definition of special is well-posed.
\begin{proposition}
Definition \ref{special} is independent on the choice of the $p$-torsion-free sheaf $\tilde{E}$.
\end{proposition}
\begin{proof}
Suppose that $(E,\epsilon_n)$ is special and that the conditions in definition \ref{special} are verified for a given coherent $p$-torsionfree $\mathcal{O}_{\hat{X}}$-module $\tilde{E}$ such that $K\otimes \tilde{E}=E$. We take an other $p$-torsion-free $\mathcal{O}_{\hat{X}}$-module $\tilde{F}$ such that $K\otimes \tilde{F}=E$ and we want to prove that the same conditions are verified. From \cite{Og} proposition 1.2 we know the following isomorphisms
$$K\otimes \mathrm{Hom}_{\mathcal{O}_{\hat{X}}}(\tilde {E},\tilde{F})\cong\mathrm{Hom}_{K\otimes \mathcal{O}_{\hat{X}}}(K\otimes \tilde{E},K\otimes \tilde{F})\cong \mathrm{End}_{K\otimes \mathcal{O}_{\hat{X}}}(E).$$ 
If we take the identity as endomorphism of $E$ we know that there exists a power of $p$, say $p^{a}$, such that the multiplication by $p^a$ is a morphism from $\tilde{E}$ to $\tilde{F}$; moreover this morphism is injective because $\tilde{F}$ is $p$-torsion-free.\\
In the same way we can prove that there exists a $b$ such that the multiplication by $p^b$ is an injective morphism between $\tilde{F}$ and $\tilde{E}$.\\
If we consider now the morphism $p^{k(n)+a+b}\epsilon_n$ then we have that the restriction of this to $p_{2,n}^{*}(\tilde{F})$ goes to $p_{1,n}^{*}(\tilde{F})$. \\
Arguing analogously for $\epsilon_{n}^{-1}$ we are done.  
\end{proof}
Let us see, first, that every object in the essential image of the functor $\Phi$ is special. \\
Following Shiho \cite{Sh1}, proposition 3.2.14 and proposition 5.2.6, both in the case of trivial log structures, we have the equivalences of categories
$$I_{conv}(\hat{X}/\Spf(V))={Str'}(\hat{X}/\Spf(V)),$$
$$I_{inf}(\hat{X}/\Spf(V))=\widehat{Str}(\hat{X}/\Spf(V)).$$
The functor $\Phi$ induces the functor 
$$\alpha:{Str'}(\hat{X}/\Spf(V))\rightarrow \widehat{Str}(\hat{X}/\Spf(V)).$$
From now on we may work locally.\\
We are reduced to the situation where $\hat{X}$ is formally smooth and we call $dx_1, \dots dx_l$ a basis of $\Omega^{1}_{\hat{X}/\Spf(V)}$. Let us call $\xi_1, \dots, \xi_l$ the dual basis of $dx_1,\dots dx_l$ where $\xi_j=1\otimes x_j-x_j \otimes 1$; we will indicate $(\xi_1, \dots ,\xi_l)$ with $\boldsymbol{\xi}$ and an $l$-ple of natural numbers $(\beta_1,\dots \beta_l)$ with $\boldsymbol{\beta}$. We will use multi-index notations denoting $\prod_j \xi_j^{\beta_j}$ by $\boldsymbol{\xi}^{\boldsymbol{\beta}}$ and $\beta_1+\dots +\beta_l$ by $|\boldsymbol{\beta}|.$\\
We call $\hat{X}^{n}$ the $n$-th infinitesimal neighborhood of $\hat{X}$ in $\hat{X}\times_{\Spf{V}}\hat{X}$. By a formal version of proposition 2.6 of \cite{BeOg} we know that $\mathcal{O}_{\hat{X}^{n}}$ is a free $\mathcal{O}_{\hat{X}}$-module generated by $\{\boldsymbol{\xi}^{\boldsymbol{\beta}}:|\boldsymbol{\beta}| \leq n\}$, so that we can write $\mathcal{O}_{\hat{X}^n}=\mathcal{O}_{\hat{X}}[[\boldsymbol{\xi}]]/(\boldsymbol{\xi}^{\boldsymbol{\beta}},\,\, |\boldsymbol{\beta}|=n+1)$.\\
We want to give a local description also for the universal system of enlargement $\{T_n\}_n$ of $\hat{X}$ in $\hat{X}\times_{\Spf{V}}\hat{X}$.  
By \cite{Og} remark 2.6.1 we know that $T_n$ is isomorphic to the $n$-th universal enlargement of $\hat{X}$ in $\hat{X}\times_{\Spf{V}}\hat{X}_{|\hat{X}}$, the formal completion of $\hat{X}\times_{\Spf{V}}\hat{X}$ along $\hat{X}$. Using this and the local description given in the proof of proposition 2.3 of \cite{Og}   
we can write $\mathcal{O}_{T_n}=\mathcal{O}_{\hat{X}}\{\boldsymbol{\xi},\boldsymbol{\xi}^{\boldsymbol{\beta}}/p\,\,\,(|\boldsymbol{\beta}|=n+1)\}$.
By universality of blowing up there exists a unique map $\psi_n$ such that the following diagram is commutative
\begin{equation}\label{phin}
\xymatrix{ 
\ar@{}|(.7)\cal[dr]&\hat{X}^n\ar@{-->}[dl]_{\psi_n}\ar[d]\\ 
T_n\ar[r]&\hat{X}\times_{\Spf{V}} \hat{X}.
} 
\end{equation} 
The functor $\alpha$ is induced by the pull back of $\psi_n$ and in local coordinates is given by
$$\mathcal{O}_{T_n}=\mathcal{O}_{\hat{X}}\{\boldsymbol{\xi},\boldsymbol{\xi}^{\boldsymbol{\beta}}/p\,\,\,(|\boldsymbol{\beta}|=n+1)  \}\rightarrow \mathcal{O}_{\hat{X}}[|\boldsymbol{\xi}|]/(\boldsymbol{\xi}^{\boldsymbol{\beta}},\,\, |\boldsymbol{\beta}|=n+1)$$
and sends $\frac{\boldsymbol{\xi}^{\beta}}{p}$ with $|\boldsymbol{\beta}|=n+1$ to 0. 
\begin{proposition}\label{immagineessenziale}
If $(E,\epsilon_n)$ is in $\widehat{Str}(\hat{X}/\Spf(V))$ and it is in the image of the functor $\alpha$, then it is special. 
\end{proposition}
\begin{proof}

Let $(E,\epsilon'_n)$ be an element of $Str'(\hat{X}/\Spf(V))$ such that $\alpha(E,\epsilon'_n)=(E,\epsilon_n)$, with
$$q_{2,n}^*E\xrightarrow{\;\epsilon'_n\;}q_{1,n}^{*}E,$$
where $q_{i,n}$ are the projections from $T_n$ to $\hat{X}$, that exist by definition of convergent stratification. \\
We note that $q_{1,n}^{*}E=E\otimes_{\mathcal{O}_{\hat{X}}}\mathcal{O}_{\hat{X}}[|\boldsymbol{\xi}|]\{\boldsymbol{\xi}^{\boldsymbol{\beta}}/p\,\, ,( |\boldsymbol{\beta}|=n+1)\}$ is embedded in $\prod_{\boldsymbol{\beta}} E\boldsymbol{\xi}^{\boldsymbol{\beta}}$ and so we have a map, that we call $\epsilon'$, which is the composition of 
$$E\rightarrow q_{2,n}^*E\xrightarrow{\;\epsilon'_n\;}q_{1,n}^{*}E\rightarrow \prod_{\boldsymbol{\beta}} E\boldsymbol{\xi}^{\boldsymbol{\beta}}.$$
Let us note that $\epsilon'$ does not depend from $n$, as a consequence of the fact that the maps $\epsilon'_n$ coming from the convergent stratification are compatible.
Using  the isomorphisms $\epsilon_n$ that define the stratification $(E,\epsilon_n)$ 
$$ p_{2,n}^*E\xrightarrow{\;\epsilon_n\;}p_{1,n}^*E=\prod_{|\boldsymbol{\beta}|\leq n}E\boldsymbol{\xi}^{\boldsymbol{\beta}},$$
where $p_{i,n}$ are the projections $p_{i,n}:X^n\rightarrow X$,  we can define a map
\begin{equation}\label{daEaq2}
E\rightarrow p_{2,n}^*E\xrightarrow{\;\epsilon_n\;}p_{1,n}^*E=\prod_{|\boldsymbol{\beta}|\leq n}E\boldsymbol{\xi}^{\boldsymbol{\beta}}.
\end{equation}
The fact that $\alpha(E,\epsilon'_n)=(E,\epsilon_n)$ means that, if we call $pr$ the projection
$$\prod_{\boldsymbol{\beta}} E\boldsymbol{\xi}^{\boldsymbol{\beta}}\xrightarrow{pr} \prod_{|\boldsymbol{\beta}|\leq n}E\boldsymbol{\xi}^{\boldsymbol{\beta}},$$
the map in (\ref{daEaq2}) coincides with the map
$$E\xrightarrow{\;\epsilon'\;}\prod_{\boldsymbol{\beta}} E \boldsymbol{\xi}^{\boldsymbol{\beta}}\xrightarrow{\;pr\;}\prod_{|\boldsymbol{\beta}|\leq n}E\boldsymbol{\xi}^{\boldsymbol{\beta}}.$$
Let $\tilde{E}$ be $p$-torsion-free $\mathcal{O}_{\hat{X}}$-module such that $K\otimes \tilde{E}=E$; so $q_{1,n}^*(\tilde{E})=\tilde{E}\otimes\mathcal{O}_{\hat{X}}[|\boldsymbol{\xi}|]\{\boldsymbol{\xi}^{\boldsymbol{\beta}}/p\,\,\,(|\boldsymbol{\beta}|=n+1) \}$ and this is embedded in $\prod_{\boldsymbol{\beta}} \tilde{E}\boldsymbol{\xi}^{\boldsymbol{\beta}}/p^{\left\lfloor \frac{|{\boldsymbol{\beta}}|}{n+1}\right\rfloor}$, \emph{i.e.} 
\begin{equation}
\xymatrix
{
q_{1,n}^*(E)=E\otimes\mathcal{O}_{\hat{X}}[|\boldsymbol{\xi}|]\{\boldsymbol{\xi}^{\boldsymbol{\beta}}/p\,\,\,(|\boldsymbol{\beta}|=n+1) \} \ar@{^(->}[r]&\prod_{\boldsymbol{\beta}} E\boldsymbol{\xi}^{\boldsymbol{\beta}}\\
q_{1,n}^*(\tilde{E})=\tilde{E}\otimes\mathcal{O}_{\hat{X}}[|\boldsymbol{\xi}|]\{\boldsymbol{\xi}^{\boldsymbol{\beta}}/p\,\,\,(|\boldsymbol{\beta}|=n+1) \}\ar@{^(->}[r]\ar@{^(->}[u]
& \prod_{\boldsymbol{\beta}} \tilde{E}\boldsymbol{\xi}^{\boldsymbol{\beta}}/p^{\left\lfloor \frac{|{\boldsymbol{\beta}}|}{n+1}\right\rfloor}\ar@{^(->}[u].\\
}
\end{equation}
The $\mathcal{O}_{\hat{X}}$-module $\tilde{E}$ is finitely generated and let $e_1, \dots, e_l$ be a set of generators; then for every $i=1, \dots l $ there exists $a_i$ such that
$$p^{a_i}\epsilon'(e_i)\subset \prod_{\boldsymbol{\beta}} \tilde{E}\frac{\boldsymbol{\xi}^{\boldsymbol{\beta}}}{p^{\left\lfloor \frac{|\boldsymbol{\beta}|}{n+1} \right\rfloor}}$$   
Thus there exists $a=:\mathrm{max}_{1\leq i\leq l}a_{i}$ in $\mathbb{N}$ such that 
$$p^a\epsilon'(\tilde{E})\subset \prod_{\boldsymbol{\beta}} \tilde{E}\frac{\boldsymbol{\xi}^{\boldsymbol{\beta}}}{p^{\left\lfloor \frac{|\boldsymbol{\beta}|}{n+1} \right\rfloor}}$$
and, if we call $\pi_{\boldsymbol{\beta}}$ the projection $\prod_{\boldsymbol{\beta}} E\boldsymbol{\xi}^{\boldsymbol{\beta}}\rightarrow E$, then 
$$p^{a+{\left\lfloor \frac{|\boldsymbol{\beta}|}{n+1} \right\rfloor}}\pi_{\boldsymbol{\beta}} \circ \epsilon'(\tilde{E})\subset \tilde{E};$$
therefore there exists a sequence $b_n(\boldsymbol{\beta})$ that tends to infinity when $|\boldsymbol{\beta}|$ goes to infinity such that 
\begin{equation}\label{conbn}
p^{\left\lfloor \frac{|\boldsymbol{\beta}|}{n} \right\rfloor}\pi_{\boldsymbol{\beta}} \circ \epsilon'(\tilde{E})\subset p^{b_n(\boldsymbol{\beta})}\tilde{E}.
\end{equation}
If we define now 
$$a(k):=\mathrm{min}\{a\in \mathbb{N}|\;p^a\pi_{\boldsymbol{\beta}}\circ \epsilon'(\tilde{E})\subset \tilde{E}\; \mathrm{for}\,\,\mathrm{all} \; \boldsymbol{\beta}\,\,\,\textrm{such that }\,\, |\boldsymbol{\beta}|\leq k\},$$
then $p^{a(k)}\epsilon_k(\tilde{E})\subset \prod_{|\boldsymbol{\beta}|\leq k}\tilde E \boldsymbol{\xi}^{|\boldsymbol{\beta}|}, $
which means that $p^{a(k)}\epsilon_k$ sends $p_{2,k}^*(\tilde{E})$ into $p_{1,k}^*(\tilde{E})$. 
So we are left to prove that $a(k)=o(k)$ for $k\rightarrow \infty$ .\\ 
We notice, from the definition of $a(k)$, that $a(k)$ is a non decreasing sequence.
If $a(k)$ is bounded, then we are done. Arguing by contradiction, then $a(k)\rightarrow \infty$ for $k\rightarrow \infty$. This means that there exists a sequence $\{k_i\}_i$ such that 
\begin{equation*}
\begin{split}
0<a(k_1)=a(k_1+1)=\dots =a(k_2-1)<\\
a(k_2)=a(k_2+1)=\dots =a(k_3-1)<\\
\dots\\
a(k_i)=a(k_i+1)=\dots =a(k_{i+1}-1)<\\
a(k_{i+1})=a(k_{i+1}+1)=\dots=a(k_{i+2}-1)<\dots .
\end{split}
\end{equation*}
Then 
$$p^{a(k_i)}\pi_{\boldsymbol{\beta}}\circ \epsilon '(\tilde{E})\subseteq \tilde{E},$$
for every $\boldsymbol{\beta}$ such that $|\boldsymbol{\beta}|\leq k_i$.
Let us prove that  
\begin{equation}\label{nonincl}
p^{a(k_i)}\pi_{\boldsymbol{\beta}}\circ \epsilon '(\tilde{E}) \nsubseteq p\tilde{E}
\end{equation}
for some $\boldsymbol{\beta}$ with $|\boldsymbol{\beta}|=k_i.$
Let us suppose that this is not true; this means that 
\begin{equation}\label{-1}
p^{a(k_i)-1}\pi_{\boldsymbol{\beta}}\circ \epsilon '(\tilde{E}) \subseteq \tilde{E}
\end{equation}
for every $\boldsymbol{\beta}$ such that $|\boldsymbol{\beta}|=k_i.$
Moreover for $\boldsymbol{\beta}$ with $|\boldsymbol{\beta}|<k_i$ we have 
$$p^{a(k_i)-1}\pi_{\boldsymbol{\beta}}\circ \epsilon '(\tilde{E}) \subseteq p^{a(k_{i-1})}\pi_{\boldsymbol{\beta}}\circ \epsilon' (\tilde{E})  \subseteq           \tilde{E}.$$
Hence we have 
$$p^{a(k_i)-1}\pi_{\boldsymbol{\beta}}\circ \epsilon '(\tilde{E}) \subseteq \tilde{E}$$
for all $\boldsymbol{\beta}$ with  $|\boldsymbol{\beta}|\leq k_i$
and this contradicts the definition of $a(k_i)$'s, so (\ref{nonincl}) holds.
If we now put together the formula (\ref{conbn}) with $|\boldsymbol{\beta}|=k_i$ and (\ref{nonincl}), we find that 
$$\lim_{i \to \infty}\left(\left\lfloor \frac{k_i}{n}\right\rfloor-a(k_i)\right)=\infty,$$
so that there exists $i_0$ such that 
$$0\leq \frac{a(k_i)}{k_i}\leq \frac{1}{n}$$
for all $i\geq i_0$. Then for any $k\geq k_{i_0}$ we can find some $k_i$ with $k_i\leq k \leq k_{i+1}-1$ and then 
$$0\leq \frac{a(k)}{k}\leq \frac{a(k_i)}{k_i}\leq \frac{1}{n}.$$ 
Hence we have that $\limsup_{k} \frac{a(k)}{k}\leq \frac{1}{n}.$
Since this is true for any $n$, we have that $a(k)=o(k).$

\end{proof}
Now we want to prove the converse: that every special object is in the image of functor $\alpha$. This is proven by Shiho in proposition 5.2.13 of \cite{Sh1} for his special objects, but the proof works also in our case.
\begin{proposition}
If $(E,\epsilon_n )$ is a special stratification on $\hat{X}$, then there exists $(E', \epsilon'_n)$ $\in$ $Str'(\hat{X},\Spf(V))$ such that $\alpha( (E', \epsilon'_n))=(E,\epsilon_n ).$
\end{proposition}

Hence we have a complete characterization in term of stratifications of the differential equations coming from a log-convergent isocrystal.\\ 

\begin{remark}
An example of the situation studied before appears in \cite{Ba} proposition 3.13 and corollary 3.14.
\end{remark}
We want to describe the property of being special in term of radius of convergence. 
We will use the formalism given in \cite{LS} in the local situation described before proposition \ref{immagineessenziale}. If we have an element $(E,\epsilon_n) \in \widehat{Str}(\hat{X}/\Spf(V))$, then we can take the inverse limit of the map that we considered in the proof of the proposition \ref{immagineessenziale}
$$E\rightarrow p_{2,n}^*E\xrightarrow{\;\epsilon_n\;}p_{1,n}^*E=\prod_{|\boldsymbol{\beta}|\leq n}E\boldsymbol{\xi}^{\boldsymbol{\beta}},$$
and we obtain
$$\theta:E\rightarrow \varprojlim p_{2,n}^*E\xrightarrow{\;\varprojlim \epsilon_n\;}\varprojlim p_{1,n}^*E=\prod_{\boldsymbol{\beta}}E\boldsymbol{\xi}^{\boldsymbol{\beta}}.$$  
According to definition 4.4.1 of \cite{LS} we can say that a section $s\in \Gamma(\hat{X},E)$ is $\eta$-convergent, with $\eta<1$ for the stratification $(E,\epsilon_n)$ if
$$\theta(s)\in \Gamma(\hat{X},E\otimes \mathcal{O}_{\hat{X}}\left\{\frac{\boldsymbol{\xi}}{\eta}\right\} ).$$
\begin{definition}\label{radius}
The radius of convergence of the section $s$ for the stratification $(E,\epsilon_n)$ is defined as
$$R(s)=\mathrm{sup}\{\eta| \;s \;\mathrm{is} \;\eta\mathrm{-convergent}\}.$$
And the radius of convergence of the stratification $(E,\epsilon_n)$ is 
$$R((E,\epsilon_n), \hat{X})=\mathrm{inf}_{s\in \Gamma(\hat{X},E)}R(s).$$
\end{definition}
\begin{proposition}
A stratification $(E, \epsilon_n)$ is special if and only if its radius of convergence is equal to 1.
\end{proposition}
\begin{proof}
We know that $\widehat{Str}(\hat{X}/\Spf(V))$ is equivalent to the category $\widehat{\mathrm{MIC}}(\hat{X}/\Spf(V))$. By lemma 5.2.15 of \cite{Sh1} we can write the map $\theta$ locally. Following the notation that we recalled before proposition \ref{immagineessenziale} we denote by $\{D_{\boldsymbol{\beta}}\}_{0\leq|\boldsymbol{\beta}|\leq n}$ the dual basis of $\{\boldsymbol{\xi}^{\boldsymbol{\beta}}\}_{0\leq|\boldsymbol{\beta}|\leq n}$ in $\mathcal{D}\textrm{iff}^n(\mathcal{O}_{\hat{X}},\mathcal{O}_{\hat{X}})$, the differential operators of order $\leq n$ and in particular we indicate with $D_{(i)}:=D_{(0\dots,1,\dots,0)}$ with $1$ at the $i$-th place .\\
With this notation
$$\theta(e)=\sum_{\boldsymbol{\beta}}\frac{1}{\boldsymbol{\beta}!}\nabla_{\boldsymbol{\beta}}(e)\otimes \xi^{\boldsymbol{\beta}}$$
with $\nabla_{\boldsymbol{\beta}}:=(\textrm{id}\otimes D_{(1)}\circ \nabla )^{\beta_1}\circ \dots \circ (\textrm{id}\otimes D_{(l)}\circ \nabla )^{\beta_l}.$
Given an $\tilde{E}$, as in the definition \ref{special}, then the fact of being special can be translated as follows: there exists a sequence of integers $a(n)$ such that $a(n)=o(n)$ for $n\rightarrow \infty$ and that 
$$p^{a(n)}\frac{\nabla_{\boldsymbol{\beta}}(e)}{\boldsymbol{\beta}!}\in \tilde{E}$$
for $e$ in $\tilde{E}$ and for any multi index $\boldsymbol{\beta}$ such that $|\boldsymbol{\beta}|\leq n$ . 
Let us see that the radius of convergence of a section $e$ $\in \tilde{E}$ is $1$, because $e \in \tilde{E}$ is $\eta$-convergent for every $\eta$, i.e. for every $\eta$
$$\theta(e)=\sum_{\boldsymbol{\beta}}\frac{1}{\boldsymbol{\beta}!}\nabla_{\boldsymbol{\beta}}(e)\otimes \boldsymbol{\xi}^{\boldsymbol{\beta}} \in \Gamma(\hat{X},E\otimes \mathcal{O}_{\hat{X}}\{\frac{\boldsymbol{\xi}}{\eta}\} ).$$
To prove it we have to show that if we denote by $\|\,\,\,\|$ the $p$-adic Banach norm on $E$ such that $\|\tilde{E}\|=1$
$$\|\frac{1}{\boldsymbol{\beta}!}\nabla_{\boldsymbol{\beta}}(e)\|\eta^{|\boldsymbol{\beta}|}\rightarrow 0, \forall \;\eta<1.$$
This is clearly true because, fixed an $n$, the following estimate holds:
$$\|\frac{1}{\boldsymbol{\beta}!}\nabla_{\boldsymbol{\beta}}(e)\|\eta^{|\boldsymbol{\beta}|}=p^{a(n)}\|p^{a(n)}\frac{1}{\boldsymbol{\beta}!}\nabla_{\boldsymbol{\beta}}(e)\|\eta^{|\boldsymbol{\beta}|}\leq p^{a(n)}\eta^{|\boldsymbol{\beta}|}$$
for every $\boldsymbol{\beta}$  such that $|\boldsymbol{\beta}|\leq n$, because $p^{a(n)}\frac{1}{\boldsymbol{\beta}!}\nabla_{\boldsymbol{\beta}}(e)\in \tilde{E}$.\\
This means that $\|\frac{1}{\boldsymbol{\beta}!}\nabla_{\boldsymbol{\beta}}(e)\|\eta^{|\boldsymbol{\beta}|}\rightarrow 0$ since
$$0 \leq \|\frac{1}{\boldsymbol{\beta}!}\nabla_{\boldsymbol{\beta}}(e)\|\eta^{|\boldsymbol{\beta}|}\leq p^{a(n)}\eta^{|\boldsymbol{\beta}|}$$
and $p^{a(n)}\eta^{|\boldsymbol{\beta}|}\rightarrow 0$ because $a(n)=o(n)$. So we can say that 
$$R(e)=1\;\forall e \in \tilde{E},$$
and if we take $s\in E$, then there exists a positive integer $k$ such that $p^{k}s \in \tilde{E}$, so that $R(p^{k}s)=1$; moreover we know that $s$ is $\eta$-convergent if and only if $p^{k}s$ is $\eta $ convergent so 
$$R(p^{k}s)=R(s)$$ 
and we can conclude that our stratification has radius of convergence $1$.\\
The converse is also true: if $(E,\epsilon_n)$ is such that $R((E,\epsilon_n),\hat{X})=1$, then $(E,\epsilon_n)$ is special. We choose an $\tilde{E}$ coherent $\mathcal{O}_{\hat{X}}$-module $p$-torsion free such that $K\otimes \tilde{E}=E$; for every $e \in \tilde{E}$ let $a(n,e)$ the minimal integer such that 
$$p^{a(n,e)} \frac{\nabla_{\boldsymbol{\beta}}(e)}{\boldsymbol{\beta}!}\in \tilde{E}$$
for any $\boldsymbol{\beta}$ with $|\boldsymbol{\beta}|\leq n$. Since $R((E,\epsilon_n),\hat{X})=1$, then 
$$\textrm{max}_{|\boldsymbol{\beta}| \leq n}\left( \|\frac{\nabla_{\boldsymbol{\beta}}(e)}{\boldsymbol{\beta}!}\|\right)\eta^n\leq \textrm{max}_{|\boldsymbol{\beta}| \leq n}\left( \|\frac{\nabla_{\boldsymbol{\beta}}(e)}{\boldsymbol{\beta}!}\|\eta^{\frac{|\boldsymbol{\beta}| }{2}}\right)\eta^{\frac{n}{2}}\leq (\textrm{const}) \eta^{\frac{n}{2}},$$
so that for $n\rightarrow \infty$ $p^{a(n,e)}\eta^{n}\rightarrow 0$ for any $\eta< 1$. This means that $a(n,e)=o(n)$ for any $e$ $\in $ $\tilde{E}.$
Now if $e$ $\in$ $\tilde{E}$, then we can write $e=\sum_{i}^{l}f_i e_i$ where $f_i$ $\in$ $\mathcal{O}_{\hat{X}}$ and $e_i$'s are generator of $\tilde{E}$
 which is finitely generated $\mathcal{O}_{\hat{X}}$ module, and we put $a(n):=\textrm{max}_{1\leq i \leq l }a(n,e_i)$ (let us note that $a(n)=o(n)$). If we denote by $d_{\boldsymbol{\beta}}$ the operator $\nabla_{\boldsymbol{\beta}}$ for the trivial stratification $(K\otimes \mathcal{O}_{\hat{X}},\textrm{id})$, then for any $f\in \mathcal{O}_{\hat{X}}$ we have $\frac{d_{\boldsymbol{\beta}}(f)}{\boldsymbol{\beta}!}$ $\in$ $\mathcal{O}_{\hat{X}}$ for any $\boldsymbol{\beta}$. Therefore, for any $e=\sum_{i}^{l}f_i e_i$ $\in $ $\tilde{E}$, we have 
 $$p^{a(n)}\frac{\nabla_{\boldsymbol{\beta}}(e)}{\boldsymbol{\beta}!}=\sum_{i=1}^l \sum_{0\leq \boldsymbol{\gamma} \leq \boldsymbol{\beta}}\frac{d_{\boldsymbol{\gamma}}f_i}{\boldsymbol{\gamma}!}\left(p^{a(n)} \frac{\nabla_{\boldsymbol{\beta}-\boldsymbol{\gamma}}e_i}{(\boldsymbol{\beta}-\boldsymbol{\gamma})!}\right)\,\,\,\in\,\,\,\tilde{E} .$$
Hence $(E,\epsilon_n)$ is special.
\end{proof}
\section{Description of the semistable case}\label{Description of the semistable case}
In what follows we suppose that $X$ is proper semistable variety over $V$, which means that locally for the \'{e}tale topology there is an \'{e}tale map 
$$X\xrightarrow{\acute{e}t} \Sp \frac{V[x_1,\dots,x_n,y_1,\dots,y_m]}{x_1\dotsm x_r -\pi}.$$
We call $M_{X_k}$ the log structure on $X$ induced by the special fiber $X_{k}$ that is a divisor with normal crossing, so locally for the \'etale topology it admits a chart given by
$$\mathbb{N}^r\rightarrow \frac{V[x_1,\dots,x_n,y_1,\dots,y_m]}{x_1\dotsm x_r -\pi}$$
that sends $e_i$ to $x_i$, where $e_i=(0,\dots,1,\dots,0)$ with $1$ at the i-th place.\\
We consider the log structure $N$ induced by the closed point of $\textrm{Spec} (V)$ that has a chart given by
$$\mathbb{N}\rightarrow V,$$
that maps $1$ to $\pi.$
This is explained in \cite{Ka} example 2.5 (1) and example 3.7 (2).\\
We also consider a normal crossing divisor $D$ on $X$ that locally for the \'{e}tale topology is defined by the equation $\{ y_1\dotsm y_s=0\}$ and we indicate by $M_D$ the log structure induced by $D$ on $X$.\\
We consider on $X$ the log structure $M=M_{X_k}\oplus M_D$, that corresponds to the log structure induced by the divisor with normal crossing $X_k\cup D$ in X (let us remark that with the notation $M_{X_k}\oplus M_D$ we indicate the sum in the category of log structures); the structural morphism extends to a log smooth morphism of log schemes $(X,M)\rightarrow(\Sp(V),N)$. Moreover the special fiber is reduced, hence the hypothesis stated at the beginning of section \ref{Relations between algebraic and analytic modules with integrable connections} are satisfied.\\
If we denote by $\hat{D}$ the $p$-adic completion of $D$, then we have a diagram 
\begin{equation*}
\xymatrix{
D_k\ar@{^(->}[r] \ar@{^(->}[d]&\hat{D}\ar@{^(->}[d]\\
X_k \ar@{^(->}[r] \ar[d] & \hat{X} \ar[d]\\
\Sp (k )\ar@{^(->}[r] & \Spf (V). \\
}
\end{equation*}
We suppose that \'etale locally on $\hat{X}$ we have the following diagram
\begin{equation}\label{etale locally}
\xymatrix{ 
\ \hat{D}=\bigcup_{j=1}^{s}D_{j}\ \ar@{^(->}[r] \ar[d] &\ \hat{X} \ar[d]  \\  
\ \bigcup_{j=1}^s\{y_j=0\}\  \ar@{^(->}[r]& \  \Spf (V\{x_1,\dots,x_n,y_1,\dots,y_m\}/(x_1\dotsm x_r-\pi) )  \\ 
}  
\end{equation}
which is cartesian with the vertical maps that are \'etale and the horizontal maps closed immersions.\\
If $\hat{X}_{sing}$ and $\hat{D}_{sing}$ are the singular loci of $\hat{X}$ and $\hat{D}$ respectively, then we will use the following notations:
$$\hat{X}^{\circ}=\hat{X}-(\hat{X}_{sing}\cup \hat{D}_{sing})$$
$$\hat{D}^{\circ}=\hat{D}-(\hat{X}_{sing}\cup \hat{D}_{sing})=\hat{X}^{\circ}\cap \hat{D}.$$
When we consider the situation \'etale locally and fix a diagram (\ref{etale locally}), we have a decomposition of the formal schemes $\hat{X}-\hat{X}_{sing}$, $\hat{X}^{\circ}$ and $\hat{D}^{\circ}$ which will be useful later. First let $\hat{X}_i^{\circ}$ be the open formal scheme of $\hat{X}$ defined by pullback of the open formal scheme of $\Spf (V\{x_1,\dots,x_n,y_1,\dots,y_m\}/(x_1\dotsm x_r-\pi))$ on which all the $x_{i'}$'s for $i'\neq i$ are invertible and let $\hat{X}^{\circ}_{i,j}$ be the open formal subscheme of $\hat{X}^{\circ}_i$ defined by \'etale pullback of the open formal subscheme of $ \Spf (V\{x_1,\dots,x_n,y_1,\dots,y_m\}/(x_1\dotsm x_r-\pi))$, where $x_{i'}$ are invertible $\forall i'\neq i, 1 \leq i \leq r$ and $y_{j'}$ are invertible $\forall j'\neq j, 1 \leq j \leq s$.\\
Moreover we will indicate with $\hat{D}^{\circ}_{i,j}$ the set $\hat{X}^{\circ}_{i,j}\cap\hat{D}=\hat{X}^{\circ}_{i,j}\cap\hat{D}_j$, that is the open formal subscheme of $\hat{D}_j$ defined by pullback of the open formal subscheme of $\Spf (V\{x_1,\dots,x_n,y_1,\dots,\hat{y}_j,\dots ,y_m\}/(x_1\dotsm x_r-\pi))$, where all the $x_{i'}$ and the $y_{j'}$ are invertible for all $ i'\neq i, 1 \leq i \leq r$,$\forall j'\neq j, 1 \leq j \leq s$.
In the previous line $\hat{y}_j$ in $\Spf (V\{x_1,\dots,x_n,y_1,\dots,\hat{y}_j,\dots ,y_m\}/(x_1\dotsm x_r-\pi))$ means that the coordinate $y_j$ is missing.\\
With this notations we have the following relations:
$$\coprod_{i}\hat{X}_i^{\circ}=\hat{X}-\hat{X}_{sing},\;\;\;\;\;\;\coprod_{i,j}\hat{X}^{\circ}_{i,j}=\hat{X}^{\circ},$$
$$\coprod_{i,j}\hat{D}^{\circ}_{i,j}=\hat{D}^{\circ}.$$
Note that this decomposition is defined only if we work \'etale locally and we fix a diagram as (\ref{etale locally}).
If we denote by the subscript $_K$ the rigid analytic space associate to a formal scheme, then the sets  $\hat{D}^{\circ}_{i,j;K}$ and $\hat{X}^{\circ}_{i,j;K}$ can be described as follows:
$$\hat{X}^{\circ}_{i,j;K}=\{P\in \hat{X}_{K}| \,\forall i'\neq i \,\,\,|x_{i'}(P)|=1 \,\,\,,\forall j'\neq j \,\,\,|y_{j'}(P)|=1\,\},$$
$$\hat{D}^{\circ}_{i,j;K}=\{P\in \hat{X}_{K}| \,\forall i'\neq i \,\,\,|x_{i'}(P)|=1 \,\,\,,\forall j'\neq j \,\,\,|y_{j'}(P)|=1\,\,\,,y_j(P)=0\}.$$
Finally we will denote by $\hat{U}$ the open formal subscheme complement of $\hat{D}$ in $\hat{X}$.

\section{Log convergent isocrystals with exponents in $\Sigma$} 
We consider now the category of locally free log convergent isocrystals on $\hat{X}$, that we denote, as before, by $I_{conv}((\hat{X},M)/(\Spf(V),N))^{lf}$. By remark 5.1.3 of \cite{Sh1} we know that there is an equivalence of categories between $((X_k,M)/(\Spf(V),N))_{conv}$, the log convergent site on the special fiber, and $((\hat{X},M)/(\Spf(V),N))_{conv}$, the log convergent site on the lifting, hence an equivalence of categories between $I_{conv}((\hat{X},M)/(\Spf(V),N))^{lf}$ and $I_{conv}((X_k,M)/(\Spf(V),N))^{lf}$, locally free isocrystals on equivalent sites.\\ 
As we saw in section \ref{Relations between algebraic and analytic modules with integrable connections}, through the functor $\Phi$ we can associate to a locally free convergent log isocrystal $\mathcal{E}$ on $\hat{X}$ a locally free infinitesimal log isocrystal $\tilde{\Phi}(\mathcal{E})$. Using the terminology of \cite{Ke} and \cite{Sh6}, in the local situation as in (\ref{etale locally}), $\tilde{\Phi}(\mathcal{E})$ induces a log-$\nabla$-module $E$ on $\hat{X}_K$ with respect to $y_1,\dots y_s$, that means a locally free coherent module $E$ on $\hat{X}_K$ and an integrable connection
$$\nabla: E\rightarrow E\otimes \omega^1_{\hat{X}_K/K},$$
where $\omega^1_{\hat{X}_K/K}$ is the coherent sheaf on $\hat{X}_K$ associated to the isocoherent sheaf $K\otimes \omega^1_{(\hat{X},M)/(\Spf(V),N)}$ on $\hat{X}$. If we are in the situation of (\ref{etale locally}) we can write $\omega^1_{\hat{X}_K/K}$ more explicitly:
if we denote by $\Omega^1_{{\hat{X}_K}/K}$ the sheaf of continuous classical 1-differentials on the rigid analytic space $\hat{X}_K$, then
$$\omega^1_{\hat{X}_K/K}=(\Omega^1_{\hat{X}_K}/K \oplus \bigoplus_{j=1}^s \mathcal{O}_{\hat{X}_K}\textrm{dlog}y_j)/L,$$
where $L$ is the coherent sub $\mathcal{O}_{\hat{X}_K}$ generated by $(dy_j,0)-(0,y_j \textrm{dlog}y_j)$ for $1\leq j \leq s.$
Fixed a $j'$ $\in$ $\{1,\dots, s\}$, i.e a component $\hat{D}_{j';K}=\{y_j'=0\}$ of $\hat{D}_K$, then there is a natural immersion of
$$\Omega^1_{\hat{X}_K}/K \oplus \bigoplus_{j\neq j'}\mathcal{O}_{\hat{X}_K}\textrm{dlog}y_j\rightarrow \omega^{1}_{\hat{X}_K/K}$$
and we call $M_{j'}$ the image.
The endomorphism $\textrm{res}_j$ is obtained by tensoring with $\mathcal{O}_{D_{j;K}}$ the following map 
$$E\rightarrow E \otimes \omega^1_{\hat{X}_K/K} \rightarrow E\otimes \omega^1_{\hat{X}_K/K}/M_j$$
and is called the residue of $E$ along $\hat{D}_{j;K}.$ Thanks to proposition 1.5.3 of \cite{BaCh} we know that there exists a minimal and monic polynomial $P_j$ $\in$ $K[T]$ such that $P_j(res_j)=0$. The exponents of $(E,\nabla)$ along $\hat{D}_{j;K}$ are the roots of $P_j$.\\
We fix a set $\Sigma=\prod_{h=1}^k\Sigma_h$ $\in$ $\mathbb{Z}_p^k$, where $k$ is the number of the irreducible components of $\hat{D}=\bigcup_{h=1}^k\hat{D}^h$ in $\hat{X}$.\\ 
If there exists an \'etale covering $\coprod_l\phi_l:\coprod_l \hat{X}_l \rightarrow \hat{X}$ such that every $\hat{X}_l$ has a diagram as in (\ref{etale locally}), then we can define a function of sets $h:\{1,\dots,r\}\times\{1,\dots,s\}\rightarrow \{1,\dots,k\}$ as follows: with the notation as in the previous paragraph $\phi_l(\hat{D}^{\circ}_{i,j,l})$ is contained in one irreducible component of $\hat{D}$, which we denote by $\hat{D}^{h(i,j)}$. We denote by $\Sigma_{h(i,j)}$ the factor of $\Sigma$ corresponding to the component $\hat{D}^{h(i,j)}$.
\begin{definition}\label{residueanalytic} 
A locally free convergent isocrystal $\mathcal{E}$ has exponents along $\hat{D}_K$ in $\Sigma$ if there exists an \'etale covering $\coprod_l\phi_l:\coprod_l \hat{X}_l \rightarrow \hat{X}$ such that every $\hat{X}_l$ has a diagram
\begin{equation}\label{etale locally X_l}
\xymatrix{ 
\ \hat{D}_l=\bigcup_{j=1}^{s}\hat{D}_{j,l}\ \ar@{^(->}[r] \ar[d] &\ \hat{X}_l \ar[d]  \\  
\ \bigcup_{j=1}^s\{y_{l,j}=0\}\  \ar@{^(->}[r]& \  \Spf V\{x_{l,1},\dots,x_{l,n},y_{l,1},\dots,y_{l,m}\}/(x_{l,1}\dots x_{l,r}-\pi)   \\ 
}  
\end{equation}
as in (\ref{etale locally}) with $\hat{D}_l:=\phi_l^{-1}(\hat{D})$. Moreover, for every $j\in\{1,\dots, s\}$, for every $l$, the log-$\nabla$-module $E_l$ on $\hat{X}_{l;K}$ induced by $\mathcal{E}$   has exponents along $\hat{D}_{j,l;K}$ in $\cap_{i=1}^r\Sigma_{h(i,j)}$, if $\phi_l(\hat{D}^{\circ}_{i,j,l})\subset \hat{D}^{h(i,j)}$.\\
We denote the category of locally free log convergent isocrystals with exponents in $\Sigma$ by $I_{conv}((\hat{X},M)/(\Spf(V),N))^{\Sigma}$ or $I_{conv}(\hat{X}/\Spf(V))^{log,\Sigma}$.

\end{definition}
In the next lemma we prove that the definition of isocrystals with exponents along $\hat{D}_K$ in $\Sigma$ is well posed.
\begin{lemma}\label{indepnilpotentresidue}
The notion of locally free log convergent isocrystal with exponents in $\Sigma$ is independent on the choice of the \'etale covering and the diagram as in (\ref{etale locally}), which are chosen in definition \ref{residueanalytic}.
\end{lemma}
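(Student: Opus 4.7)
The plan is to reduce well-posedness to two intrinsic properties of the residue endomorphism, then to compare two arbitrary choices of cover via their fibre-product refinement. The first key property is that, for a log-$\nabla$-module on $\hat{X}_K$ with logarithmic poles along a smooth irreducible divisor $Z$, the residue endomorphism---and hence the set of exponents, i.e.\ the roots of $P_j$---depends only on $Z$, not on the local defining equation. This follows directly from the description of $\omega^1_{\hat{X}_K/K}/M_j$: replacing a coordinate $y_{l,j}$ by $uy_{l,j}$ for a local unit $u$ changes $d\log y_{l,j}$ by $d\log u\in \Omega^1_{\hat{X}_K/K}$, which already lies in $M_j$ and hence vanishes in the quotient defining $\mathrm{res}_j$. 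In particular, a change of chart coordinates on a single $\hat{X}_l$ preserves the exponents.

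The second key property is compatibility with \'etale pullback: if $\psi\colon \hat{Y}\to \hat{X}_l$ is \'etale and $Z'$ is an irreducible component of $\psi^{-1}(\hat{D}_{j,l;K})$, then the residue of $\psi_K^*(E_l,\nabla)$ along $Z'$ is obtained from $\mathrm{res}_j$ by pullback along $Z'\to \hat{D}_{j,l;K}$. Consequently the minimal polynomial of the new residue divides $P_j$, and the exponents along $Z'$ form a subset of those along $\hat{D}_{j,l;K}$; conversely, \'etale surjectivity forces equality of the monic minimal polynomials, so the two exponent sets coincide.

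With these two properties in hand, I would proceed as follows. Given two \'etale coverings $\coprod\phi_l\colon \coprod\hat{X}_l\to \hat{X}$ and $\coprod\phi'_{l'}\colon \coprod\hat{X}'_{l'}\to \hat{X}$, each with local diagrams of the form (\ref{etale locally}), form the common refinement $\{\hat{X}_l\times_{\hat{X}}\hat{X}'_{l'}\}_{l,l'}$, which is \'etale over both $\hat{X}_l$ and $\hat{X}'_{l'}$. On this refinement every component of the pullback of $\hat{D}$ is obtained by further decomposing the $\hat{D}_{j,l}$'s (equivalently the $\hat{D}'_{j',l'}$'s), and every new component maps to the \emph{same} irreducible component of $\hat{D}$ downstairs as the piece $\hat{D}^{\circ}_{i,j,l}$ it meets. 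Thus the assignment $(i,j)\mapsto h(i,j)$, being defined by the image $\phi_l(\hat{D}^{\circ}_{i,j,l})\subset \hat{D}^{h(i,j)}$, is intrinsic to $\mathcal{E}$ and the global decomposition $\hat{D}=\bigcup_h \hat{D}^h$, and passes unchanged to the refinement. By the two key properties, the condition that the exponents along $\hat{D}_{j,l;K}$ lie in $\bigcap_i \Sigma_{h(i,j)}$ holds on $\hat{X}_l$ if and only if the corresponding condition holds on the refinement, and by symmetry the same holds for $\hat{X}'_{l'}$.

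The main obstacle I expect is the combinatorial bookkeeping rather than any deep analytic input: the indexing set $\{1,\dots,r\}\times\{1,\dots,s\}$ of ``small open pieces'' changes when passing between charts, so one has to verify that $\bigcap_{i=1}^r \Sigma_{h(i,j)}$ really encodes the chart-independent information ``exponents along the component $\hat{D}_{j,l;K}$ lie in $\bigcap_h \Sigma_h$, where the intersection runs over all irreducible components $\hat{D}^h$ of $\hat{D}$ into which some point of $\hat{D}_{j,l;K}$ maps.'' Once this reformulation is made, independence from both the \'etale cover and the coordinates is manifest, and the lemma follows.
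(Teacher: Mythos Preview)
Your argument is correct and follows the same essential strategy as the paper: pass to the fibre-product refinement of two charts, use that residues pull back along \'etale maps (so minimal polynomials divide), and then control the combinatorics of which global component $\hat{D}^h$ each local piece maps to. The paper carries this out in a slightly different shape: rather than comparing two coverings symmetrically, it fixes one covering witnessing the $\Sigma$-condition and tests against an \emph{arbitrary} single \'etale $\phi\colon \hat{X}'\to \hat{X}$ admitting a diagram, which is a marginally stronger statement that is reused later in the proof of Proposition~\ref{indepunipmonodromy}. The paper also makes the component-matching step more explicit by restricting to the smooth locus $\hat{X}^{\prime\circ}_l$, where $\hat{D}^{\prime\circ}_l$ becomes a relative normal crossing divisor in a smooth formal scheme, and then invokes \cite{NaSh} to identify the irreducible components coming from the two charts. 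Your final intrinsic reformulation---``exponents along $\hat{D}_{j,l;K}$ lie in $\bigcap_h \Sigma_h$, the intersection over all $\hat{D}^h$ meeting the image of $\hat{D}_{j,l;K}$''---is a cleaner way to package the same bookkeeping and makes the independence transparent; the paper arrives at the equivalent conclusion by chasing the commutative square of projections $\hat{X}'\times_{\hat{X}}\hat{X}_l\rightrightarrows \hat{X}',\hat{X}_l\to \hat{X}$. One small imprecision: your claim that \'etale surjectivity forces \emph{equality} of minimal polynomials is only literally true component-by-component after noting that the upstairs divisor may split; what you actually need (and what the paper uses) is the injectivity of $\textrm{End}_{\mathcal{O}_{\hat{D}'_{j';K}}}(E'|_{\hat{D}'_{j';K}})\to \textrm{End}_{\mathcal{O}_{\coprod_l \hat{D}'_{j',l;K}}}(\coprod_l E'_l|_{\hat{D}'_{j',l;K}})$, which follows from faithful flatness.
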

\begin{proof}
Let us suppose that $\mathcal{E}$ is a log convergent isocrystal with exponents along $\hat{D}_{K}$ in $\Sigma$. It is sufficient to prove that for any \'etale morphism $\phi: \hat{X}^{\prime}\rightarrow \hat{X}$, such that for $\hat{X}^{\prime}$ there exists a diagram
\begin{equation}\label{etale locally X'}
\xymatrix{ 
\ \hat{D}^{\prime}=\bigcup_{j'=1}^{s'}\hat{D}^{\prime}_{j'}\ \ar@{^(->}[r] \ar[d] &\ \hat{X}^{\prime} \ar[d]  \\  
\ \bigcup_{j'=1}^{s'}\{y^{\prime}_{j'}=0\}\  \ar@{^(->}[r]& \  \Spf V\{x^{\prime}_1,\dots,x^{\prime}_{n'},y^{\prime}_1,\dots,y^{\prime}_{m'}\}/(x^{\prime}_1\dots x^{\prime}_{r'}-\pi)   \\ 
}  
\end{equation}
as in (\ref{etale locally}), with $\hat{D}^{\prime}:=\phi^{-1}(\hat{D})$, the log-$\nabla$-module $E^{\prime}$ on $\hat{X}^{\prime}_K$ induced by $\mathcal{E}$ has exponents along $\hat{D}^{\prime}_{j';K}$ in $\cap_{i'=1}^{r'}\Sigma_{h(i',j')}$, if $\phi(\hat{D}^{' \circ}_{i',j'})\subset \hat{D}^{h(i',j')}$ .\\

By hypothesis $\mathcal{E}$ has exponents along $\hat{D}_K$ in $\Sigma$, hence there exists an \'etale covering $\coprod_l\phi_l:\coprod_l \hat{X}_l \rightarrow \hat{X}$ such that every $\hat{X}_l$ has a diagram
\begin{equation}\label{etale locally X_l}
\xymatrix{ 
\ \hat{D}_l=\bigcup_{j=1}^{s}\hat{D}_{j,l}\ \ar@{^(->}[r] \ar[d] &\ \hat{X}_l \ar[d]  \\  
\ \bigcup_{j=1}^s\{y_{l,j}=0\}\  \ar@{^(->}[r]& \  \Spf V\{x_{l,1},\dots,x_{l,n},y_{l,1},\dots,y_{l,m}\}/(x_{l,1}\dots x_{l,r}-\pi)   \\ 
}  
\end{equation}
as in (\ref{etale locally}) with $\hat{D}_l:=\phi_l^{-1}(\hat{D})$ such that for every $l$ the log-$\nabla$-module $E_l$ on $\hat{X}_{l;K}$ induced by $\mathcal{E}$ has exponents along $\hat{D}_{j,l;K}$ in $\cap_{i=1}^{r}\Sigma_{h(i,j)}$, if $\phi_l(\hat{D}^{ \circ}_{i,j})\subset \hat{D}^{h(i,j)}$ .\\

Let us denote by $\hat{X}^{\prime}_l$ the fiber product $\hat{X}^{\prime}\times_{\hat{X}} \hat{X}_l$ and by $\hat{D}^{\prime}_l,\,\, \hat{D}^{\prime}_{j',l},\,\, \hat{D}''_{j,l}$ the inverse image of $\hat{D}_l,\,\, \hat{D}'_{j'},\,\, \hat{D}_{j,l}$ on $\hat{X}'_l$ respectively. With this notation we have  two diagrams on $\hat{X}'_l$:
\begin{equation}\label{etale locally X'_l indotto da X'}
\xymatrix{  
\ \hat{D}'_l=\bigcup_{j'=1}^{s'}D'_{j',l}\ \ar@{^(->}[r] \ar[d] &\ \hat{X}'_l \ar[d]  \\  
\ \bigcup_{j'=1}^{s'}\{y'_{j'}=0\}\  \ar@{^(->}[r]& \  \Spf V\{x'_{1},\dots,x'_{n'},y'_{1},\dots,y'_{m'}\}/(x'_{1}\dots x'_{r'}-\pi)   \\ 
}  
\end{equation}
and 
\begin{equation}\label{etale locally X'_l indotto da X_l}
\xymatrix{ 
\ \hat{D}'_l=\bigcup_{j=1}^{s}\hat{D}''_{j,l}\ \ar@{^(->}[r] \ar[d] &\ \hat{X}'_l \ar[d]  \\  
\ \bigcup_{j=1}^s\{y_{j,l}=0\}\  \ar@{^(->}[r]& \  \Spf V\{x_{1,l},\dots,x_{n,l},y_{1,l},\dots,y_{m,l}\}/(x_{1,l}\dots x_{r,l}-\pi)   .\\ 
}  
\end{equation}
The diagram (\ref{etale locally X'_l indotto da X'}) is induced by (\ref{etale locally X'}) through $p_2$, the projection on the second factor $\mathrm{pr}_2:\hat{X}^{\prime}\times_{\hat{X}} \hat{X}_l\rightarrow  \hat{X}_l$; and the diagram (\ref{etale locally X'_l indotto da X_l}) is induced by (\ref{etale locally X_l}) through $p_1$, the projection on the first factor $\mathrm{pr}_2:\hat{X}^{\prime}\times_{\hat{X}} \hat{X}_l\rightarrow  \hat{X}'$.\\
The log-$\nabla$-module $E'_l$ induced by $\mathcal{E}$ on $\hat{X}'_{l;K}$ has exponents  along $\hat{D}''_{j,l;K}$ which are contained in the set of exponents of $E_l$ along $\hat{D}_{j,l;K}$. This happens because the residue of $E'_l$ along $\hat{D}''_{j,l;K}$, denoted  by $\mathrm{res}^{''}_{j,l}$ is the image of the residue of $E_l$ along $\hat{D}_{j,l;K}$, denoted by $\mathrm{res}_{j,l}$, via the map
$$\textrm{End}_{\mathcal{O}_{\hat{D}_{j,l;K}}}({E_l}|_{\hat{D}_{j,l;K}})\rightarrow \textrm{End}_{\mathcal{O}_{\hat{D}^{ \prime \prime}_{j,l;K}}}({E'_l}|_{\hat{D}^{\prime \prime }_{j,l;K}}) ,$$ 
which is induced by the projection $\mathrm{pr}_{2}$.
If $P_{j,l}$ is the minimal and monic polynomial such that $P_{j,l}(res_{j,l})=0$, then $P_{j,l}(res^{''}_{j,l})=0$, so if we denoted by $P^{''}_{j,l}$ the minimal and monic polynomial such that $P^{''}_{j,l}(res^{''}_{j,l})=0$, then $P^{''}_{j,l}\mid P_{j,l}$. So the roots of $P^{''}_{j,l}$ are contained in the roots of $P_{j,l}$, which means that the exponents of $E'_l$ along $\hat{D}''_{j,l}$ are a subset of the set of exponents of $E_l$ along $\hat{D}_{j,l;K}$. Since for every $(i,j)$ such that $\phi_l \circ \mathrm{pr}_2(\hat{D}^{\prime \prime \circ}_{i,j,l})$ is contained in $\hat{D}^{h(i,j)}$ also $\phi_l(\hat{D}^{\circ}_{i,j,l})$ is contained in $\hat{D}^{h(i,j)}$ and viceversa, then we proved that for every $(i,j)$ such that $\phi_l \circ \mathrm{pr}_2(\hat{D}^{\prime \prime \circ}_{i,j,l})$ is contained in $\hat{D}^{h(i,j)}$ the exponents along $\hat{D}''_{j,l;K}$ are contained in $\cap_{i=1}^r\Sigma_{h(i,j)}$ .\\  
Now we want to look at the exponents of $E'_l$ along $\hat{D}'_{j',l;K}$. \\

Let us put $\hat{X}^{\prime \circ}_l:=\hat{X}'_l\cap \hat{X}^{\circ}$, $\hat{D}^{\prime \circ }_l:=\hat{D}'_l\cap \hat{X}^{\circ}$, $\hat{D}^{\prime \circ }_{l,j'}:=\hat{D}'_{l,j'}\cap \hat{X}^{\circ}$ and $\hat{D}^{\prime \prime \circ }_{l,j}:=\hat{D}''_{l}\cap \hat{X}^{\circ}$. Since the map
$$\textrm{End}_{\mathcal{O}_{\hat{D}'_{l,j';K}}}(E'_l|_{\hat{D}'_{l,j';K}})\rightarrow \textrm{End}_{\mathcal{O}_{\hat{D}^{\prime \circ }_{l,j';K}}}(E'_l|_{\hat{D}^{\prime \circ }_{l,j';K}})$$
is injective because of the local freeness it is enough to look at the exponents of $E'_l|_{\hat{X}^{\prime \circ }_{l;K}}$ along $\hat{D}^{'\circ}_{l,j';K}$.

Let us note that $\hat{D}^{'\circ}_{l}$ is a relative normal crossing divisor in a smooth formal $V$-scheme; if $\hat{D}^{'\circ}_{l}=\bigcup_t \hat{C}_{t,l}$ is the decomposition of $\hat{D}^{'\circ}_{l}$ in irreducible components, from \cite{NaSh} proposition A.0.3 and proposition A.0.7, we can deduce that $\hat{D}^{'\circ}_{i',j',l}$ and $\hat{D}^{'' \circ}_{i,j,l}$, that are irreducible components of $\hat{D}^{'\circ}_{l}$, correspond to some $\hat{C}_{t,l}$'s. 
Thanks to what we have proven before we know that $E^{'}_l|_{\hat{X}^{' \circ}_{l;K}}$ has  exponents along $\hat{D}^{''\circ}_{j,l;K}$ in $\cap_{i=1}^r\Sigma_{h(i,j)}$ where $(i,j)$ are such that $\phi_l \circ \mathrm{pr}_2(\hat{D}^{\prime \prime \circ}_{i,j,l})$ is contained in $\hat{D}^{h(i,j)}$.We now consider $\hat{D}^{'\circ}_{j',l}$, then $\hat{D}^{'\circ}_{i',j',l}$ will coincide with some $C_{t,l}$'s, so will correspond to some  $\hat{D}^{''\circ}_{i,j,l}$.
If $C_{t,l}$ is such that $\phi \circ\mathrm{pr}_1(C_{t,l})\subset \hat{D}^h$ then also $\phi_{l} \circ\mathrm{pr}_2(C_{t,l})\subset \hat{D}^h$ by the commutativity of the following diagram
\begin{equation}
\xymatrix{
\  \hat{X}^{\prime}\times_{\hat{X}} \hat{X}_l \ar[d]^{pr_{1}} \ar[r]^{pr_{2}} & \hat{X}_{l} \ar[d]_{\phi_{l}} \\
 \hat{X}^{\prime} \ar[r]^{\phi} & \hat{X}
 }.
\end{equation}
So we can conclude that the exponents of $E'_l$ along $\hat{D}^{'}_{j',l;K}$ are contained in $\cap_{i'=1}^{r'}\Sigma^{h(i',j')}$ with $(i',j')$ such that $\phi \circ \mathrm{pr}_1(\hat{D}^{'\circ}_{i',j',l}) \subset \hat{D}^{h(i',j')}$.\\
Finally we prove that $E'$ has exponents along $\hat{D}'_{j';K}$ in $\cap_{i'=1}^{r'}\Sigma_{h(i',j')}$ with $(i',j')$ such that $\phi(\hat{D}^{' \circ}_{i',j'})\subset \hat{D}^{h(i',j')}$. Having a surjective \'etale map $\coprod_l \hat{X}'_l\rightarrow \hat{X}'$, the thesis is reduced to prove that the induced map
\begin{equation}\label{coprod}
\textrm{End}_{\mathcal{O}_{\hat{D}^{'}_{j';K}}}(E'\otimes \mathcal{O}_{\hat{D}^{\prime }_{j';K}}) \rightarrow \textrm{End}_{\mathcal{O}_{\coprod_l\hat{D}^{'}_{j',l;K}}}(\coprod_l{E'_l\otimes \mathcal{O}_{\coprod_l\hat{D}^{\prime }_{j',l;K}}} )
\end{equation}
is injective. \\
If $\phi(\hat{D}^{' \circ}_{i',j'})\subset \hat{D}^{h(i',j')}$, then $\phi \circ \mathrm{pr}_1(\hat{D}^{'\circ}_{i',j',l}) \subset \hat{D}^{h(i',j')}$ for every $l$ and  $\phi \circ \mathrm{pr}_1(\coprod_l\hat{D}^{'\circ}_{i',j',l})$ is contained in $\hat{D}^{h(i',j')}$.
One can see that the residue of $E'$ along $\hat{D}^{'}_{j';K}$ goes via the map in (\ref{coprod}) into the residue of $\coprod_l{E'_l}$ along $\coprod_l\hat{D}^{'}_{j',l;K}$.\\
$$\Gamma(\hat{D}'_{j';K},\mathcal{O}_{\hat{D}'_{j';K}})\rightarrow\Gamma(\coprod_l \hat{D}'_{j',l;K}, \mathcal{O}_{\coprod_l\hat{D}'_{j',l;K}} )$$
is injective, since $\coprod_l \hat{X}'_l\rightarrow \hat{X}'$ is \'etale surjective and then faithfully flat.
\end{proof}
\section{Log-$\nabla$-modules on polyannuli}
We recall in this section the notion of log-$\nabla$-modules on some particular rigid space defined and used by Kedlaya in \cite{Ke} and by Shiho in \cite{Sh6}. \\
An aligned interval is a interval $I$ contained in $[0,\infty)$ such that any end point is contained in $\Gamma^{*}$  with $\Gamma^{*} $ the multiplicative divisible closure of the image of the absolute value $|\,\,\,|:K^*\rightarrow \mathbb{R}^+$. An aligned interval is said to be quasi open if is open at any non zero end point.
For an aligned interval we define a polyannulus as the rigid analytic space $A^n_K(I)=\{(t_1,\dots,t_n) \in \mathbb{A}^{n,rig}_K| |t_i|\in I \,\forall \,\, i=1,\dots,n\}.$\\ 
For example we will consider $A^1_K([\lambda,1])$, the rigid annulus with coefficients in $K$ and radii $\lambda$ and $1$, for $\lambda \in \Gamma^{*}$ or $A^n_K([0,0])$, the polyannulus in $n$ coordinates of null radius.\\
If $Y$ is a smooth rigid analytic space and $y_1,\dots, y_s$ are global sections such that they are smooth and meet transversally, then for a subset $\Sigma=\prod_{j=1}^{s}\Sigma_{j}$ $\subset$ $\bar{K}^{s}$ we denote by $\mathrm{LNM}_{Y,\Sigma}$ the category of log-$\nabla$-module on $Y$ such that all the exponents along $\{y_{j}=0\}$ are contained in $\Sigma_{j}$ for every $j=1,\dots,s.$\\
If $Y$ is a smooth rigid analytic space and $y_1,\dots, y_s$ are global sections such that they are smooth and meet transversally, then we set $\omega_{Y\times A^n_K([0,0])/K}=\omega^1_{Y/K}\oplus \bigoplus_{i=1}^{n}\mathcal{O}_{Y}\textrm{dlog} t_i.$ We define a log-$\nabla$-module $(E,\nabla)$ on  $Y\times A^n_K([0,0])/K$ with respect to $y_1\dots, y_s, t_1,\dots t_n$  as a log-$\nabla$-module $(E,\nabla)$ on $Y$ with respect to $y_{1},\dots, y_{s}$ with $n$ commuting endomorphisms $\partial_i=t_i\frac{\partial}{\partial t_i}$ of $(E,\nabla)$ for $i=1,\dots n$. If we fix $\Sigma=\prod_{j=1}^{s}\Sigma_j\times \prod_{i=1}^{n}\Sigma_{i}$ $\subset$ $\bar{K}^{s+n}$, we can define a log-$\nabla$-module $(E,\nabla)$ with respect to $y_1\dots y_s, t_1,\dots, t_n$ on $Y\times A^n_K([0,0])/K$ with exponents in $\Sigma$ if the log-$\nabla$-module $(E,\nabla)$ on $Y$ has exponents along $\{y_{j}=0\}$ in $\Sigma_{j}$ and if the commuting endomorphisms $\partial_i=t_i\frac{\partial}{\partial t_i}$ have eigenvalues in $\Sigma_{i}$ for every $i=1,\dots, n$. Following Shiho we denote the category of locally free log-$\nabla$-modules on $Y\times A^n_K([0,0])/K$ with exponents in $\Sigma$ by $\mathrm{LNM}_{Y\times A^n_K([0,0]),\Sigma}$.\\ 
If $I$ is an aligned interval and  $\xi:=(\xi_1,\dots, \xi_n)$ $\in$ $\bar{K}^n$, the log-$\nabla$-module denoted by $(M_{\xi},\nabla_{M_{\xi}})$ is the log-$\nabla$-module on $A_K^n(I)$ given by $(\mathcal{O}_{A_K^n(I)}, d+\sum_{j=1}^n\xi_j\textrm{dlog}t_j).$
We will define now the notion of $\Sigma$-unipotence for log-$\nabla$-modules on a product of a smooth rigid analytic space and a polyannulus (\cite{Sh6} definition 1.3). 
\begin{definition}
Let $Y$ be a smooth rigid analytic space, $y_1,\dots,y_s$  global sections whose zero loci are smooth and meet transversally, $I$ an aligned interval and $\Sigma=\prod_{j=1}^{s+n}\Sigma_j$ $\subset$ $\bar{K}^{s+n}$. We say that $\mathcal{E}=(E,\nabla)$ $\in$ $\mathrm{LNM}_{Y\times A^n_K(I)}$ is $\Sigma$-unipotent if after some finite extension of $K$ there exists a filtration 
$$0\subset\mathcal{E}_1\subset \dots \subset \mathcal{E}_n=\mathcal{E}$$
of subobjects such that every successive quotient $\mathcal{E}_i/\mathcal{E}_{i-1}\cong \pi_1^*{\mathcal{F}}\times \pi_2^*(M_{\xi}, \nabla_{M_{\xi}})$, where $\pi_1$ denotes the first projection, $\pi_2$ the second, $\mathcal{F}$ is a log-$\nabla$-module $\in$ $\mathrm{LNM}_{X,\prod_{j=1}^s \Sigma_j}$ and$(M_{\xi},\nabla_{M_{\xi}})$ denotes the log -$\nabla$-module we defined before with $\xi$ $\in$ $\prod_{j=s+1}^{s+n}\Sigma_j$.\\
We will denote by $\mathrm{ULNM}_{Y\times A^n_K(I), \Sigma}$ the categories of $\Sigma$-unipotent log-$\nabla$-modules on $Y\times A^n_K(I).$
\end{definition} 
\begin{remark}(Remark 1.16 of \cite{Sh6})
We note that in the case of $I=[0,0]$ we have that $\mathrm{ULNM}_{Y\times A^n_K([0,0]), \Sigma}\cong \mathrm{LNM}_{Y\times A^n_K([0,0]), \Sigma}$: every object in $\mathrm{LNM}_{Y\times A^n_K([0,0]), \Sigma}$ is $\Sigma$-unipotent. Let us take $\mathcal{E}$ in $\mathrm{LNM}_{Y\times A^n_K([0,0]), \Sigma}$, seeing as a log-$\nabla$-module $(E, \nabla)$ in $\mathrm{LNM}_{Y, \prod_{j=1}^s\Sigma_j}$ endowed with $\partial_j$ for $j=1,\dots, n$, commuting endomorphisms. To prove that it is $\Sigma$-unipotent we proceed by induction on the rank of $E$. We consider $E_1=\cap_{i=1}^n Ker(\partial_i-\xi_i)$, for some $(\xi_1, \dots, \xi_n)\in\prod_{j=s+1}^{s+n}\Sigma_j$. The submodule $E_1$ is  non zero, it is $\Sigma$-constant and $E/E_1$ is $\Sigma$-unipotent by induction hypothesis. Hence $\mathcal{E}$ is $\Sigma$-unipotent.  
\end{remark}
Shiho (\cite{Sh6} Definition 1.5) defines a functor 
$$\mathcal{U}_I:\mathrm{LNM}_{Y\times A^n_K([0,0]),\Sigma}\rightarrow \mathrm{LNM}_{Y\times A^n_K(I),\Sigma }.$$
It associates to a log-$\nabla$-module $\mathcal{E}$ a log-$\nabla$-module $\mathcal{U}_I(\mathcal{E})$ defined as the sheaf $\pi_1^*{\mathcal{E}}$ and the connection 
$$v\mapsto \pi_1^*(\nabla)v+\sum_{i=1}^n\pi_1^*(N_i)(v)\otimes \frac{dt_i}{t_i},$$
where $N_i$ for $i=1,\dots n$ are the commuting endomorphisms attached to $\mathcal{E}$ with eigenvalues on $\Sigma_i$.\\
We recall here the definition of a non Liouville number, which we will use in the sequel.
\begin{definition}
An element $\alpha$ in $\bar{K}$ is said to be $p$-adically non-Liouville if both the power series $\sum_{n\neq \alpha}\frac{x^n}{\alpha -n}$ and $\sum_{n\neq \alpha}\frac{x^n}{n-\alpha }$ have radius of convergence equal to $1$.\\
\end{definition}
As in definition 1.8 of \cite{Sh6} we can define the following.
\begin{definition}
A set $\Sigma$ $\subset$ $\bar{K}$ is called (NID) (resp. (NLD)) if for any $\alpha$, $\beta$ $\in$ $\Sigma$, $\alpha-\beta$ is a non zero integer (resp. is p-adically non-Liouville). A set $\Sigma=\prod_{j=1}^{s}\Sigma_{j}$ $\subset$ $\bar{K}^{s}$ is called (NID) (resp. (NLD)) if for any $j=1,\dots,s$ $\Sigma_{j}$ is (NID) (resp. (NLD)).\\
\end{definition}
We will use the following result (\cite{Ke} 3.3.4, \cite{Ke} 3.3.6, \cite{Sh6} corollary 1.15 and \cite{Sh6} corollary 1.16)
\begin{theorem}\label{unipotentiequivalenti}
Let $Y$ be as before, $I$ a quasi open interval and $\Sigma=\prod_{j=1}^{s+n}\Sigma_{j}$ $\subset$ $\bar{K}^{s+n}$ which is (NID) and (NLD) then the restriction of the functor $\mathcal{U}_I$ to the $\Sigma$-unipotent log-$\nabla$-modules
$$\mathcal{U}_I:\mathrm{ULNM}_{Y\times A^n_K([0,0]),\Sigma}\rightarrow \mathrm{ULNM}_{Y\times A^n_K(I),\Sigma}$$
is an equivalence of categories. If $I$ is an interval of length $\geq 0$, but not necessarily quasi open, then the functor $\mathcal{U}_I$ is fully faithful.
\end{theorem}  

\section{Log overconvergent isocrystals}
Before defining the category of log overconvergent isocrystals, we recall the notion of log tubular neighborhood given by Shiho in \cite{Sh2} definition 2.2.5 with some restrictive hypothesis and in \cite{Sh3} paragraph 2 in full generality. This is the log version of the tubular neighborhood defined by Berthelot in  \cite{Be}.\\
Given a closed immersion of fine log formal schemes $i:(Z,M_Z)\hookrightarrow (\f{Z},M_{\f{Z}})$, there exists a fine log formal scheme $(\f{Z}^{ex},M_{\f{Z}^{ex}})$ and an associated homeomorphic closed  exact immersion $i^{ex}:(Z,M_Z)\hookrightarrow (\f{Z}^{ex},M_{\f{Z}^{ex}})$ such that the functor that associates $i^{ex}$ to $i$  is a right adjoint functor to the inclusion functor from the category of homeomorphic closed immersions of log formal schemes in the category of closed immersions of log formal schemes. The functor $i\mapsto i^{ex}$ is called the exactification functor and its existence is proven in \cite{Sh3} proposition-definition 2.10.\\
\begin{definition}
Let $(Z,M_Z)\hookrightarrow(\f{Z},M_{\f{Z}})$ be a closed immersion of log formal schemes, then the log tubular neighborhood $]Z[^{log}_{\f{Z}}$ of $(Z,M_Z)$ in $(\f{Z},M_{\f{Z}})$ is defined as the rigid analytic space  $\f{Z}^{ex}_K$ associated to the formal scheme $\f{Z}^{ex}$. We can define the specialization map
$$\textrm{sp}: ]Z[^{log}_{\f{Z}}\rightarrow \hat{\f{Z}},$$
where $\hat{\f{Z}}$ is the completion of $\f{Z}$ along $Z$, as the composition of the usual specialization map $ ]Z[^{log}_{\f{Z}}=\f{Z}^{ex}_{K}\rightarrow \f{Z}^{ex}$ with the map $\f{Z}^{ex}\rightarrow \hat{\f{Z}}$ induced by the morphism $\f{Z}^{ex}\rightarrow \f{Z}$. 
\end{definition}
We can notice that, if the closed immersion $i:(Z,M_Z)\hookrightarrow(\f{Z},M_{\f{Z}})$ is exact, then $\f{Z}^{ex}=\f{Z}_K$ and the log tubular neighborhood $]Z[^{log}_{\f{Z}}$ coincides with the classical tubular neighborhood.\\ 
We define the category of log overconvergent isocrystals for log pairs. Log pairs are defined in paragraph 4 of \cite{Sh4} and  in 2.1 of \cite{ChTs} in the case of trivial log structures. A log pair is a pair $((X,M_X),(\bar{X},M_{\bar{X}}))$ of fine log schemes in characteristic $p$ endowed with a strict open immersion $(X,M_X)\hookrightarrow (\bar{X},M_{\bar{X}})$. A morphism of log pairs $f:((X,M_X),(\bar{X},M_{\bar{X}}))\rightarrow((Y,M_Y),(\bar{Y},M_{\bar{Y}}))$ is a morphism of log schemes $f :\bar{X}\rightarrow \bar{Y}$ that verifies $f(X)\subset Y$.  A log pair $((X,M_X),(\bar{X},M_{\bar{X}}))$ over a log pair $((S,M_S),(\bar{S},M_{\bar{S}}))$ is a log pair endowed with the structural morphism $f:((X,M_X),(\bar{X},M_{\bar{X}}))\rightarrow((S,M_S),(\bar{S},M_{\bar{S}}))$.  We assume that all log pairs are log pairs over a given log pair $((S,M_{S}),(S,M_{S}))$. In paragraph 4 of \cite{Sh4} there is a definition of log overconvergent isocrystals for log pairs over a log pair $((S,M_{S}),(S,M_{S}))$ with $M_{S}$ isomorphic to the trivial log structure; we will give analogous definition in the case of non necessarily trivial $M_S$.\\
A log triple is a triple $((X,M_X),(\bar{X},M_{\bar{X}}),(\f{P},M_{\f{P}}))$ which consists of a log pair $((X,M_{X}),(\bar{X},M_{\bar{X}}))$ and a log formal scheme $(\f{P},M_{\f{P}})$ over a log formal scheme $(\f{S},M_{\f{S}})$ endowed with a closed immersion $(\bar{X},M_{\bar{X}})\hookrightarrow (\f{P},M_{\f{P}}).$ Morphisms of log triples are defined in the natural way, as well as a log triple over an other log triple. We will work only with triples $((X,M_X),(\bar{X},M_{\bar{X}}),(\f{P},M_{\f{P}}))$ over a fixed log triple $((S,M_S),(S,M_{S}),(\f{S},M_{\f{S}}))$.\\ 
As in the classical case, for a log triple $((X,M_X),(\bar{X},M_{\bar{X}}),(\f{P},M_{\f{P}}))$  we can define a strict neighborhood $W$ of $]X[^{log}_{\f{P}}$ in $]\bar{X}[^{log}_{\f{P}}$ to be an admissible open of $]\bar{X}[^{log}_{\f{P}}$ such that $\{W,]\bar{X}[^{log}_{\f{P}}-]{X}[^{log}_{\f{P}}\}$ is an admissible covering of $]\bar{X}[^{log}_{\f{P}}$. Given a sheaf of $O_W$ modules $\mathcal{E}$ we define the sheaf of overconvergent sections as the sheaf $j^{\dag}_W \mathcal{E}=\varinjlim_{W'} \alpha_{W',]\bar{X}[^{log}_{\f{P}}*}\alpha_{W',W}^{-1}\mathcal{E},$ where $W'$ varies among the strict neighborhoods of $]X[^{log}_{\f{P}}$ in $]\bar{X}[^{log}_{\f{P}}$ that are contained in $W$ and $\alpha_{T,T'}:T\hookrightarrow T'$ is the natural inclusion. If $W=]\bar{X}[^{log}_{\f{P}}$, then we will denote the sheaf of overconvergent sections by $j^{\dag}\mathcal{E}$.\\  
We suppose that there exists a commutative diagram 
\begin{equation} \label{diaconj}
\xymatrix{ 
\ (\bar{X},M_{\bar{X}})\ \ar[r]^{j} \ar[d]^{g} &\ (\f{P},M_{\f{P}})\  \ar[d]^{h} \\  
\ (\bar{Y}, M_{\bar{Y}})\  \ar[r]^{i}& \  (\f{Y} , M_{\f{Y}})   \\ 
}
\end{equation}
where $j$ is a closed immersion and $h$ is formally log smooth. If we denote by $(\f{P}(1), M_{\f{P}(1)})$ (resp. $(\f{P}(2), M_{\f{P}(2)})$) the fiber product of $(\f{P}, M_{\f{P}})$ with itself over $(\f{Y},M_{\f{Y}})$ (reps. the fiber product of $(\f{P}, M_{\f{P}})$ with itself over $(\f{Y},M_{\f{Y}})$ three times), then the projections and the diagonal induce the following maps:
$$p_{i}:]\bar{X}[_{\f{P}(1)}^{log}\rightarrow ]\bar{X}[_{\f{P}}^{log}\,\,\, \textrm{for} \,\,\,i=1,2,$$
$$p_{i,j}:]\bar{X}[_{\f{P}(2)}^{log}\rightarrow ]\bar{X}[_{\f{P}(1)}^{log}\,\,\, \textrm{for}  \,\,\,1\leq i,j \leq 3,$$
$$\Delta:]\bar{X}[^{log}_{\f{P}}\rightarrow ]\bar{X}[_{\f{P}(1)}^{log}.$$
\begin{definition}
With the previous notation a log overconvergent isocrystal is a pair $(\mathcal{E},\epsilon)$ consisting of a coherent $j^{\dag}\mathcal{O}_{]\bar{X}[_{\f{P}}^{log}}$-module $\mathcal{E}$ and $\epsilon$ a $j^{\dag}\mathcal{O}_{]\bar{X}[_{\f{P}(1)}^{log}}$-linear isomorphism $\epsilon:p_1^*{\mathcal{E}}\rightarrow p_2^*{\mathcal{E}}$ that satisfies $\Delta^*(\epsilon)=\textrm{id}$ and $p_{1,2}^*(\epsilon)\circ p_{2,3}^*(\epsilon)=p_{1,3}^*(\epsilon).$ We denote by $I^{\dag}(((X,\bar{X})/\f{Y},\f{P})^{log})$ the category of log overconvergent isocrystals on $((X,M_X),(\bar{X},M_{\bar{X}})/\f{Y})$ over $(\f{P},M_{\f{P}})$. We say that $(\mathcal{E},\epsilon)$ is a locally free log overconvergent isocrystal if $\mathcal{E}$ is a locally free $j^{\dag}\mathcal{O}_{]\bar{X}[_{\f{P}}^{log}}$-module and we indicate the category of locally free log overconvergent isocrystals with $I^{\dag}(((X,\bar{X})/\f{Y},\f{P})^{log})^{lf}.$
\end{definition}
In the case of trivial log structures the previous definition coincides with the definition of overconvergent isocrystals given by Berthelot \cite{Be}.
\begin{remark}
Shiho in \cite{Sh4} definition 4.2 defines the category of log overconvergent isocrystals also in a more general situation, but for our purposes the definition we gave is sufficient.
 \end{remark}
Given a log pair $((X, M_X),(\bar{X}, M_{\bar{X}}))$, we assume the existence of  a diagram
\begin{equation}\label{suk}
(\bar{X}, M_{\bar{X}})\xrightarrow{g}(\bar{Y}, M_{\bar{Y}})\xrightarrow{i}(\f{Y}, M_{\f{Y}}),
\end{equation}
where $(\bar{Y}, M_{\bar{Y}})$ is a log scheme over $(S,M_{S})$, $(\f{Y}, M_{\f{Y}})$ is a $p$-adic log formal scheme over $(\f{S},M_{\f{S}})$ and $i$ is a closed immersion.\\
Coming back to the setting discussed in section \ref{Description of the semistable case}, we consider the log triple $((U_k,M),(X_k,M),(\hat{X}, M))$ over $((\Sp (k), N),(\Sp (k), N),(\Spf (V),N))$ and the following commutative diagram
\begin{equation*}
\xymatrix{ 
\ (U_k,M)\ \ar@{^(->}[r] \ar[d] &\ (X_k,M)\  \ar[d]_{f_k} \ar@{^(->}[r] & \ (\hat{X}, M)\ \ar[d] \\  
\ (\Sp (k), N)\  \ar@{^(->}[r]& \  (\Sp (k), N)  \ar@{^(->}[r] & \ \ (\Spf (V), N)\ .\\ 
}
\end{equation*}
The diagram as in (\ref{suk}) is given by
\begin{equation*}
(U_k,M)\xrightarrow{g}(\Sp k,N)\xrightarrow{i}(\Spf V,N)
\end{equation*}
and the commutative diagram as in (\ref{diaconj}) is 
\begin{equation*} 
\xymatrix{ 
\ (X_k,M)\ \ar[r]^{j} \ar[d]^{g} &\ \ (\hat{X},M)) \ar[d]^{h} \\  
\ (\Sp (k), N)\  \ar[r]^{i}& \  (\Spf (V), N) . \\ 
}
\end{equation*}
Let us note that since the immersion $(U_k,M)\hookrightarrow(\hat{X},M)$ is strict and the closed immersion $(X_k,M)\hookrightarrow(\hat{X},M)$ is exact, the log tubes in these cases coincide with the classical tubes:
$$]X_k[^{log}_{\hat{X}}=]X_k[_{\hat{X}}=\hat{X}_{K}$$
$$]U_k[^{log}_{\hat{X}}=]U_k[_{\hat{X}}=\hat{U}_{K}.$$ 
Now we want to give a description of integrable connections associated to locally free log overconvergent isocrystals in our case. By proposition 2.1.10 of \cite{Be} we know that there is an equivalence of categories between Coh$(j^{\dag}\mathcal{O}_{]X_k[_{\hat{X}}})$, the category of $j^{\dag}\mathcal{O}_{]X_k[_{\hat{X}}}$-coherent modules, and the inductive limit category of coherent modules over strict neighborhoods of $]U_k[_{\hat{X}}$ in $]X_k[_{\hat{X}}$. Thanks to remark after proposition 2.1.10 of \cite{Be}, if $(\mathcal{E}, \epsilon)$ is a locally free log overconvergent isocrystal, then $\mathcal{E}$ is a locally free $j^{\dag}\mathcal{O}_{]X_k[_{\hat{X}}}$ module, which means that there exists a strict neighborhood $W$ of  $]U_k[_{\hat{X}}$ in $]X_k[_{\hat{X}}$  and a locally free $\mathcal{O}_{W}$-module $E$, such that $j^{\dag}_W E=\mathcal{E}$ .\\
The log overconvergent isocrystal $(\mathcal{E},\epsilon)$ induces an integrable connection on $\mathcal{E}$
$$\nabla :\mathcal{E}\rightarrow \mathcal{E}\otimes_{j^{\dag}\mathcal{O}_{]X_k[_{\hat{X}}}}j^{\dag}\omega^{1}_{{]X_k[_{\hat{X}}}/K} $$
where $\omega^{1}_{{]X_k[_{\hat{X}}}/K}$ is the restriction of $K\otimes \omega^1_{(\hat{X},M)/(\Spf(V),N)}$ to $]X_k[_{\hat{X}}$; 
moreover given a strict neighborhood $W$ of $]U_k[_{\hat{X}}$ in $]X_k[_{\hat{X}}$, as we saw before, there exists $E$ on $W$ such that  $j^{\dag}_W E=\mathcal{E}$ and there exists an integrable connection
$$\nabla: E\rightarrow E\otimes \omega^1_{W/K}$$
which induces the above connection on $\mathcal{E}$, where $\omega^1_{W/K}$ is the restriction of $K\otimes \omega^1_{(\hat{X},M)/(\Spf(V),N)}$ to W. \\
If \'etale locally we are in the situation described in (\ref{etale locally}), then $W$ contains a subspace of the form 
$$\{P \in \hat{X}_K\,\,\,|\,\,\,\forall j \,\,\,|y_j(P)| \geq \lambda\}$$
for some $\lambda$ $\in $ $(0,1)\cap \Gamma^{*}$ with $\Gamma^{*} $ the divisible closure of the image of the absolute value $|\,\,\,|:K^*\rightarrow \mathbb{R}^+$.\\
Therefore we can restrict $E$ to the space 
$$\{P \in \hat{X}^{\circ}_{i,j;K}\,\,\,|\,\,\, \lambda \leq |y_j(P)| < 1 \}.$$
\begin{proposition}
There is an isomorphism $$\phi: \hat{D}_{j,i;K}^{\circ}\times A^1_K([\lambda,1)) \rightarrow\{P \in \hat{X}^{\circ}_{i,j;K}\,\,\,|\,\,\, \lambda \leq |y_j(P)| < 1 \},$$ where $A^1_K([\lambda,1)):=\{t\in \mathbb{A}^1_K\,\,\,|\,\,\, |t|\,\,\,\in\,\,\,[\lambda,1) \}$.
\end{proposition}
\begin{proof}
If we can prove that $\hat{D}_{j,i;K}^{\circ}\times A^1_K([0,1))\cong \hat{X}^{\circ}_{i,j;K}$, then the isomorphism of the proposition will be clear. To prove this we will apply lemma 4.3.1 of \cite{Ke}: we consider $A=\Gamma(\hat{X}^{\circ}_{i,j;K}, \mathcal{O}_{X_K})$,
and as $B$ the ring $A/y_j A=\Gamma(\hat{D}_{i,j;K}^{\circ},\mathcal{O}_{X_K})$. We can apply lemma 4.3.1 of \cite{Ke} because $\Gamma(\hat{D}_{i,j;K}^{\circ},\mathcal{O}_{X_K})$ is formally smooth over $K$
and we can conclude that 
$$\Gamma(\hat{X}_{i,j;K}^{\circ},\mathcal{O}_{X_K})\cong\Gamma(\hat{D}_{i,j;K}^{\circ},\mathcal{O}_{X_K})[|y_j|]$$
i.e. the isomorphism that we wanted.
\end{proof}
We fix a set $\Sigma=\prod_{h=1}^k\Sigma_h$ $\in$ $\mathbb{Z}_p^k$, where $k$ is the number of the irreducible components of $\hat{D}=\bigcup_{h=1}^k\hat{D}^h$ in $\hat{X}$, with the same notations as before definition \ref{residueanalytic}.\\ 
\begin{definition}\label{unipotentmonodromy}
A log overconvergent isocrystal $\mathcal{E}$ has $\Sigma$-unipotent monodromy if there exists an \'etale covering $\coprod_l\phi_l:\coprod_l \hat{X}_l \rightarrow \hat{X}$ such that every $\hat{X}_l$ has a diagram
\begin{equation}
\xymatrix{ 
\ \hat{D}_l=\bigcup_{j=1}^{s}\hat{D}_{j,l}\ \ar@{^(->}[r] \ar[d] &\ \hat{X}_l \ar[d]  \\  
\ \bigcup_{j=1}^s\{y_{j,l}=0\}\  \ar@{^(->}[r]& \  \Spf V\{x_{1,l},\dots,x_{n,l},y_{1,l},\dots,y_{m,l}\}/(x_{1,l}\dots x_{r,l}-\pi)   \\ 
}  
\end{equation}
as in (\ref{etale locally}) with $\hat{D}_l:=\phi_l^{-1}(\hat{D})$ such that for every $l$  the restriction of the log-$\nabla$-module $E_l$ on $\hat{X}_{l;K}$ to 
$$ \hat{D}^{\circ}_{i,j,l;K}\times A^1_K[\lambda,1),$$
is $\cap_{i=1}^r\Sigma_{h(i,j)}$-unipotent , $\forall$ $(i,j)$ such that $\phi_l(\hat{D}_{i,j,l}^{\circ})\subset \hat{D}^{h(i,j)}$.\\
We denote the category of log overconvergent isocrystals with $\Sigma$-unipotent monodromy by $I^{\dag}(((U_k,X_k)/\Spf(V))^{log, \Sigma}$ or $I^{\dag}((U_k, M),(X_k,M))/(\Spf(V),N))^{\Sigma}$.
\end{definition}
\begin{remark}
In definition \ref{unipotentmonodromy} we do not ask any locally freeness hypothesis, because every object in the category $I^{\dag}((U_k, M),(X_k,M))/(\Spf(V),N))$ is such that $\mathcal{E}$ is locally free. This is clear because $(\mathcal{E},\epsilon)$ induces on a strict neighborhood $W$ an $\mathcal{O}_{W}$-module $E$, such that $j^{\dag}_W E=\mathcal{E}$ endowed with an integrable connection. As $K$ is of characteristic $0$ we can conclude that $E$ is locally free and $\mathcal{E}$ is locally free.
\end{remark}
\section{Unipotence, generization and overconvergent generization}
In this section we recall the three propositions that we will use in the proof of the main theorem. They are proven by Shiho in \cite{Sh6} as generalization of the analogous propositions proven by Kedlaya in \cite{Ke} (assuming $\Sigma=0$). We will write the statements only in the cases that we need, that are simplified versions of the propositions given in paragraph 2 of \cite{Sh6}. 
The first property that we consider is called by Shiho and Kedlaya generization property for monodromy (\cite{Ke} proposition 3.4.3 and \cite{Sh6} proposition 2.4 ).
\begin{proposition}\label{Generization of monodromy}
Let $A$ be an affinoid algebra such that $Y=\textrm{Spm}(A)$ is smooth and endowed with sections $y_1,\dots, y_s$ that are smooth and meet transversally.
Suppose that there exists $A\subseteq L$ such that $L$ is an affinoid algebra over $K$, $\textrm{Spm}(L)$ is smooth, all the $y_i$'s are invertible in $L$ and the spectral norm on $L$ restricts to the spectral norm on $A$. Let $I$ be a quasi open interval contained in $[0,1)$ and $A^n_L(I)$ defined as $\textrm{Spm}(L)\times A^n_K(I)$. Let $\Sigma \subset \mathbb{Z}_p^{n+s}$ be a set which is (NLD) and (NID); if $E$ $\in $ $LNM_{Y\times A_K^n(I),\Sigma}$ is such that the induced object $F$ $\in$ $LNM_{A^n_L(I),\Sigma}$ is unipotent, then $E$ is $\Sigma$-unipotent.
\end{proposition} 
The second result that we need is called overconvergent generization and describes the property of extension of unipotence on strict neighborhoods  (proposition 2.7 of \cite{Sh6} and proposition 3.5.3 of \cite{Ke}).
\begin{proposition}\label{Overconvergent generization}
Let $P$ be a $p$-adic formal affine scheme topologically of finite type over $V$. Let $Y_k\subseteq P_k$ be an open dense subscheme of the special fiber of $P$ such that $P$ is formally smooth over $V$ in a neighborhood of $Y_k$. Let $W$ be a strict neighborhood of $]Y_k[_P$ in $P_K$, $I \subset [0,1)$ a quasi open interval and $\Sigma$ a subset of $\mathbb{Z}_p^n$. Given $E\in $ $LNM_{W\times A^n_K(I),\Sigma}$ such that the restriction of $E$ to $]Y_k[_P\times A^n_K(I)$ is $\Sigma$-unipotent, then for every closed interval $[b,c]\subseteq I$ there exists a strict neighborhood $W'$ of $]Y_k[_P$ in  $P_K$ such that $W'$ is contained in $W$ and such that the restriction of $E$ to $W'\times A_K^n[b,c]$ is $\Sigma$-unipotent.
\end{proposition}
The third property that we need states that, under certain assumptions, a log-$\nabla$-module with exponents in $\Sigma$ that is convergent is $\Sigma$-unipotent (proposition 2.12 of \cite{Sh6} and lemma 3.6.2 of \cite{Ke}).
Before giving the statement we recall what is a log-$\nabla$-module with exponents in $\Sigma$ that is convergent (\cite{Sh6}, definition 2.9).
\begin{definition}
Let $Y$ be a smooth affinoid rigid space endowed with $y_1, \dots, y_s \in \Gamma(Y, O_Y)$ whose zero loci are smooth and meet transversally, let $a \in (0,1] \cap \Gamma^*$ and let $E$ be a log-$\nabla$-module on $X \times A^n_K([0,a))$ with respect to $y_1,\dots, y_s,t_1,\dots,t_n$. Then $E$ is called log convergent if, for any $a' \in (0,a) \cap \Gamma^*$, $\eta \in (0,1)$ and $v \in \Gamma (Y \times A^n_K ([0, a']), E)$, the multisequence
$$b_{i_1,\dots,i_n}:=\left\{ \frac{1}{i_1!, \dotsm i_n!}\left(\prod_{j=1}^n\prod_{l=0}^{i_j-1} \left(t_j\frac{\partial}{\partial t_j}-l            \right)\right)(v)\right\}_{i_1, \dots, i_n}$$
is $\eta$-null which means that for any multisequence $c_{i_1,\dots i_n}$ in any complete extension of $K$ with $|c_{i_1,\dots, i_n}|< \eta^{i_1+\dots +i_n}$ the multisequence $\{c_{i_1,\dots,i_n}b_{i_1,\dots, i_n}\}$ converges to zero. 
\end{definition}
\begin{proposition}\label{Convergent plus nilpotent implies unipotent}
Let $A$ be an integral affinoid algebra such that $Y=\textrm{Spm}(A)$ is smooth. Suppose that there exists $A\subseteq L$ such that $L$ is an affinoid algebra over $K$, $\textrm{Spm}(L)$ is smooth and $y_j$'s are invertible in $L$ and the spectral norm of $L$ restricts to the spectral norm of $A$.  Let $\Sigma \subset \mathbb{Z}_p^{n+s}$ be a set which is (NLD) and (NID); if $E$ is an object of $LNM_{Y\times A^n_K([0,1)), \Sigma}$ which is log convergent, then it is $\Sigma$-unipotent.
\end{proposition}
\section{Extension theorem}
Now we come back to the semistable situation and we will prove that the definition of $\Sigma$-unipotent monodromy is well posed.
\begin{proposition}\label{indepunipmonodromy} Let $\mathcal{E}$ be an overconvergent log isocrystal which is in the category $I^{\dag}((U_k,X_k)/\Spf(V))^{log, \Sigma}$.
The notion of $\Sigma$-unipotent monodromy for $\mathcal{E}$ is independent on the choice of the \'etale cover and of the diagram in (\ref{etale locally}) which we have chosen in definition \ref{unipotentmonodromy}. 
\end{proposition}
\begin{proof}
First we will prove that if $\mathcal{E}$ has $\Sigma$-unipotent monodromy for some diagram as in (\ref{etale locally}), then it has $\Sigma$-unipotent monodromy for any diagram as in (\ref{etale locally}). So we suppose that there exists an \'etale covering $\coprod_l\phi_l:\coprod_l \hat{X}_l \rightarrow \hat{X}$ such that every $\hat{X}_l$ has a diagram as in (\ref{etale locally}). As we saw before $\mathcal{E}$ induces on some $W$ strict neighborhood of  $]U_k[_{\hat{X}}$ in $]X_k[_{\hat{X}}$ a locally free $\mathcal{O}_{W}$-module $E$ with an integrable connection. In the situation of (\ref{etale locally}) $W$ contains the set
  $$\{P \in \hat{X}_{;K}\,\,\,|\,\,\,\forall j \,\,\,|y_j(P)| \geq \lambda\}$$
and the restriction of $E$ to 
$$\hat{D}^{\circ}_{i,j;K}\times A^1_K[\lambda,1)$$
is $\cap_{i=1}^r\Sigma_{h(i,j)}$-unipotent for some $\lambda,$ if $(i,j) $ are such that $\phi_l(\hat{D}^{\circ}_{i,j}) \subset \hat{D}^{h(i,j)}$.\\
Using theorem \ref{unipotentiequivalenti} we can extend $E$ to a module with connection on 
$$\hat{D}_{i,j;K}^{\circ}\times A^1_K[0,1)$$
that is $\cap_{i=1}^r\Sigma_{h(i,j)}$-unipotent. In fact, we can restrict $E$ to a module with connection on $\hat{D}_{i,j;K}^{\circ}\times A^1_K([0,0])$ which is $\cap_{i=1}^r\Sigma_{h(i,j)}$-unipotent and use the equivalence of categories of theorem \ref{unipotentiequivalenti} to extend it to a $\cap_{i=1}^r\Sigma_{h(i,j)}$-unipotent log-$\nabla$-module on $\hat{D}_{i,j;K}^{\circ}\times A^1_K[0,1)$ ([0,1) is a quasi open interval). This is true for every $(i,j)$ so $E$ can be extended to a locally free isocrystal on $((\coprod_{(i,j)}\hat{X}^{\circ}_{i,j}, M)/(\Spf(V),N))$ 
which is convergent because it is convergent on $\hat{U}\cap \coprod_{(i,j)}\hat{X}^{\circ}_{i,j}$ that is an open dense $\hat{X}\cap \coprod_{(i,j)}\hat{X}^{\circ}_{i,j}$ (we are applying here proposition \ref{U denso}). Hence we have a locally free isocrystal on $((\hat{X}^{\circ}, M)/(\Spf(V),N))$ which is convergent and such that has exponents along $\hat{D}_{j}$ in $\cap_{i=1}^r\Sigma_{h(i,j)}$ for every $(i,j)$ such $\phi_l(\hat{D}^{\circ}_{i,j})\subset \hat{D}^{h(i,j)}$.\\
For any other diagram as in (\ref{etale locally}), $E$ on $\hat{D}_{i,j;K}^{\circ}\times A^1_K[\lambda, 1)$ with $(i,j)$ such that $\phi_l(\hat{D}_{i,j}^{\circ})\subset \hat{D}^{h(i,j)}$ is the restriction of a convergent module with connection on  $\hat{D}^{\circ}_{i,j:K}\times A^1_K[0,1)$ with exponents in $\cap_{i=1}^r \Sigma^{h(i,j)}$, so that it is $\cap_{i=1}^r\Sigma^{h(i,j)}$-unipotent on $\hat{D}_{i,j;K}^{\circ}\times A^1_K[\lambda, 1)$ by proposition \ref{Convergent plus nilpotent implies unipotent}.\\
Now we prove that the notion of $\Sigma$-unipotence is independent on the choice of the \'etale covering. To do this, it suffices to prove that if $\mathcal{E}$ is a log overconvergent isocrystal with $\Sigma$-unipotent monodromy, for any \'etale morphism $\phi: \hat{X}'\rightarrow \hat{X}$ such that $\hat{X}'$ admits a diagram 
\begin{equation}\label{etale locally X'unip}
\xymatrix{ 
\ \hat{D}^{\prime}=\bigcup_{j=1}^{s}\hat{D}^{\prime}_{j}\ \ar@{^(->}[r] \ar[d] &\ \hat{X}^{\prime} \ar[d]  \\  
\ \bigcup_{j=1}^s\{y^{\prime}_j=0\}\  \ar@{^(->}[r]& \  \Spf V\{x^{\prime}_1,\dots,x^{\prime}_n,y^{\prime}_1,\dots,y^{\prime}_m\}/(x^{\prime}_1\dots x^{\prime}_r-\pi)   \\ 
}  
\end{equation}
as in (\ref{etale locally}), with $\hat{D}':=\phi^{-1}(\hat{D})$, the log-$\nabla$-module $E$ induced on $\hat{X}'$ by $\mathcal{E}$ is $\cap_{i=1}^r\Sigma_{h(i,j)}$-unipotent on $\hat{D}^{' \circ}_{i,j;K}\times A^1_K[\lambda,1)$, for every $(i,j)$ such that $\hat{D}^{' \circ}_{i,j}$ is such that $\phi(\hat{D}^{' \circ}_{i,j})\subset \hat{D}^{h(i,j)}$. We may assume that $\hat{D}^{' \circ}_{i,j;K}=\textrm{Spm}A$ is affinoid. As in the proof of proposition \ref{indepnilpotentresidue} we know that there exists an \'etale covering $\coprod_l\hat{X}'_l\rightarrow \hat{X}'$ such that, for any $l$, $\hat{X}'_l$ admits a diagram as in (\ref{etale locally}) such that $\mathcal{E}$ has $\Sigma$-unipotent monodromy with respect to this diagram. \\
However since we have already showed that the notion of $\Sigma$-unipotent monodromy does not depend on the choice of a diagram as in (\ref{etale locally}), we can say that $\mathcal{E}$ has $\Sigma$-unipotent monodromy with respect to a diagram as in (\ref{etale locally}) for $\hat{X}'_l$ induced by the diagram (\ref{etale locally X'unip}). This means that the log-$\nabla$-module $E$ is $\cap_{i=1}^r\Sigma_{h(i,j)}$-unipotent when it is restricted to $\coprod_l(\hat{D}^{' \circ}_{i,j;K}\times_{\hat{X}_{K}}\hat{X}'_{l,K})\times A^{1}_K[\lambda,1)$ with $(i,j)$ such that $\phi (\coprod_{l}(\hat{D}^{' \circ}_{i,j}\times_{\hat{X}}\hat{X}'_{l}))\subset \hat{D}^{h(i,j)}$.\\
Let us take an affine covering $\coprod_h \hat{C}_{h}\rightarrow \coprod_l (\hat{D}^{' \circ}_{i,j}\times_{\hat{X}}\hat{X}'_{l})$ by affine formal schemes and put $\textrm{Spm}L:=\coprod_h \hat{C}_{h;K}$. Then $E$ is $\cap_{i=1}^r\Sigma_{h(i,j)}$-unipotent on $\textrm{Spm}L\times A^1_K[\lambda,1)$ and the affinoid algebras $A$ and $L$ satisfy the assumption of proposition \ref{Generization of monodromy}. Applying proposition \ref{Generization of monodromy} we can conclude that $E$ is $\cap_{i=1}^r\Sigma_{h(i,j)}$-unipotent on $\hat{D}^{' \circ}_{i,j;K}\times A^1_K[\lambda,1)$ as we wanted.
\end{proof}

We can now state the main result, an extension theorem that generalizes theorem 6.4.5 of \cite{Ke} and theorem 3.16 of \cite{Sh6}. The strategy of the proof is the same as the one in \cite{Ke} and \cite{Sh6} and we will follow step by step the proof of theorem 3.16 of \cite{Sh6}.\\
We will need for the proof the following lemma:
\begin{lemma}\label{denso}
If $W$ is an open dense in $P=\textrm{Spf}(A)$, with $A$ a formal $V$ algebra of topologically finite type such that $A_k$ is reduced, then the spectral seminorm on $\mathcal{O}(]W_k[_{P})$ restricts to the spectral seminorm on $\mathcal{O}(]P_k[_{P})$.
\end{lemma}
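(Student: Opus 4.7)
The strategy is to reduce everything to the following concrete statement: whenever $f\in A$ has nonzero reduction $\bar f\in A_k$, there is a rigid point $x\in\,]W_k[_P$, possibly over a finite extension of $K$, at which $|f(x)|=1$. The reducedness of $A_k$ plays two roles here. First, it forces $A$ to coincide with the spectral-seminorm unit ball of $A_K$, so that $|f|_{\mathrm{sp},\,P_K}=\inf\{|\lambda|:\lambda\in K^\times,\,f\in\lambda A\}$. Second, it prevents a nonzero element of $A_k$ from vanishing on the dense open $W_k$. Both facts are classical in rigid geometry and I would cite them from \cite{Bo} or \cite{Og}.

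The inequality $|f|_{\mathrm{sp},\,]W_k[_P}\leq |f|_{\mathrm{sp},\,P_K}$ is immediate from the inclusion $]W_k[_P\subseteq\,]P_k[_P=P_K$. For the reverse, I would take $f\in A$ with $\bar f\neq 0$, use reducedness plus density to produce a closed point $\bar x\in W_k$ (after a possible finite extension of $k$) where $\bar f$ does not vanish, and lift $\bar x$ through the specialization map $\mathrm{sp}\colon P_K\to P_k$, which is surjective onto closed points for admissible formal schemes. The resulting rigid point $x\in \mathrm{sp}^{-1}(W_k)=\,]W_k[_P$ satisfies $|f(x)|\leq 1$ because $f\in A$, and $|f(x)|=1$ because $\bar f(\bar x)\neq 0$. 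To remove the unit-norm normalization, for general $f\in A_K$ with $|f|_{\mathrm{sp},\,P_K}=c>0$ I would use power-multiplicativity: pick $n$ with $c^n\in|K^\times|$ and $\lambda\in K^\times$ with $|\lambda|=c^n$, then apply the above to the unit-norm element $f^n/\lambda$ to produce a point of $]W_k[_P$ at which $|f|=c$.

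The main obstacle is simply an appropriate invocation of the reduction package: one must verify that $A_k$ reduced implies $A=A_K^\circ$ and that specialization is surjective onto closed points, both of which are standard but deserve careful citation. The power-multiplicativity reduction needed to handle spectral values outside $|K^\times|$ is cosmetic.
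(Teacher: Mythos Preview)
Your argument is correct and follows a genuinely different route from the paper's. The paper first reduces to the case where $W=D(g)$ is a principal open and then argues algebraically: reducedness of $A_k$ together with density of $W_k$ makes $A_k\to (A\{1/g\})_k$ injective, and (by an implicit $\pi$-adic induction using flatness) this forces the Gauss norms with unit balls $A$ and $A\{1/g\}$ to agree on $A_K$; the spectral seminorms then coincide via the formula $|a|_{\mathrm{sp}}=\lim_n|a^n|^{1/n}$.

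Your approach is instead pointwise and geometric: you invoke the identification $A=A_K^\circ$ (the standard consequence of $A_k$ being reduced, correctly flagged as the place where reducedness enters for you) to match the spectral norm with the Gauss norm, and then exhibit a witness point in $]W_k[_P$ by lifting through specialization a closed point of $W_k$ at which $\bar f$ does not vanish. This avoids both the reduction to principal opens and the spectral-radius limit, at the cost of citing two standard facts about admissible formal $V$-schemes (the identification $A=A_K^\circ$ and surjectivity of $\mathrm{sp}$ onto closed points); both are indeed in \cite{Bo}. The power-multiplicativity step handling spectral values outside $|K^\times|$ is fine, and the phrase ``possibly over a finite extension of $K$'' is harmless---in rigid-geometry language it simply records that residue fields of points of $P_K$ are finite over $K$.
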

\begin{proof}
We can suppose that $W$ is defined by the equation $\{g\neq 0\},$ in particular  that $W=\textrm{Spf}\left(A\left\{\frac{1}{g}\right\}\right).$\\
So we have a map of $V$-algebras 
$$A\longrightarrow A \left\{\frac{1}{g}\right\},$$
which is injective modulo $\pi$ because $W_k$ is dense in $A_k$ that is reduced. By the topological Nakayama's lemma (ex 7.2 of \cite{Ei}) we can conclude that we have an inclusion of $V$-algebras
$$A\hookrightarrow A \left\{\frac{1}{g}\right\}$$
which induces an inclusion of affinoid algebras
$$ \mathcal{O}(]P_k[_{P})=A\otimes K\hookrightarrow  \mathcal{O}(]W_k[_{P})=A \left\{\frac{1}{g}\right\}\otimes K$$
Let us take the Banach norm $|\,\,\,|_P$ on $A\otimes K$ induced by $A$ and the Banach norm  $|\,\,\,|_W$ on $A \left\{\frac{1}{g}\right\}\otimes K$ induced by $A\{\frac{1}{g}\}$, then
$$|a|_P=|a|_W$$
for any $a$ $\in$ $K\otimes A.$ 
By the well known formula (see for example \cite{FvdP} corollary 3.4.6)
$$|a|_{P,\textrm{sp}}=\lim_{n\rightarrow \infty}|a^n|_P^{\frac{1}{n}}$$
where with $|\,\,\,|_{P,\textrm{sp}}$ we denote the spectral norm, we are done. 
\end{proof}
\begin{theorem}\label{log extension}
We fix a set $\Sigma=\prod_{h=1}^k\Sigma_h$ $\in$ $\mathbb{Z}_p^k$, where $k$ is the number of the irreducible components of $\hat{D}=\bigcup_{h=1}^k\hat{D}^h$ in $\hat{X}$ and we require that $\Sigma$ has the properties (NID) and (NLD).  
Let us suppose that locally for the \'etale topology we have a diagram as (\ref{etale locally}), then the restriction functor
$$j^{\dag}:I_{conv}((\hat{X},M)/(\Spf(V),N))^{\Sigma}\longrightarrow I^{\dag}((U_k, M),(X_k,M))/(\Spf(V),N))^{\Sigma}$$
is an equivalence of categories.
\end{theorem} 
\begin{proof}
We will divide the proof in 3 steps.\\	
\textbf{Step 1}: the functor $j^{\dag}$ is well defined.\\
Let $\mathcal{E}$ be in $I_{conv}((\hat{X},M)/(\Spf(V),N))^{\Sigma}$, then we prove that $j^{\dag}(\mathcal{E})$ $\in   I^{\dag}((U_k, M),(X_k,M))/(\Spf(V),N))^{\Sigma}$. Thanks to lemma \ref{indepnilpotentresidue} and lemma \ref{indepunipmonodromy}, we can work \'etale locally. We suppose that $\phi$ is an \'etale map to $\hat{X}$ and we call again $\hat{X}$ an \'etale neighborhood for which we have the diagram as in (\ref{etale locally}); in this situation the log convergent isocrystal $\mathcal{E}$ induces a log-$\nabla$-module with respect to $y_1,\dots ,y_s$ on $\hat{X}_K$ such that has exponents along $\hat{D}_{j;K}$ in $\cap_{i=1}^r\Sigma_{h(i,j)}$ if $i$ and $j$ are such that $\phi(\hat{D}^{\circ}_{i,j})\subset \hat{D}^{h(i,j)}$. By the definition of $\Sigma$-unipotent monodromy (definition \ref{unipotentmonodromy}) we are reduced to prove that, if we restrict $E$ to 
$$\hat{D}_{i,j;K}^{\circ}\times A^1_K [0,1),$$
then it is $\cap_{i=1}^r\Sigma_{h(i,j)}$-unipotent  if $i, j$ are such that $\phi(\hat{D}_{i,j}^{\circ})\subset \hat{D}^{h(i,j)}$.\\
We know by hypothesis that the restriction of $E$ to $\hat{D}^{\circ}_{i,j;K}\times A^1_K [0,1)$ is log convergent and has exponents along $\hat{D}_{j;K}$in $\cap_{i=1}^r\Sigma_{h(i,j)}$, hence we have to prove that this implies $\cap_{i=1}^r\Sigma_{h(i,j)}$-unipotence; from proposition \ref{Convergent plus nilpotent implies unipotent} we know that this implies $\cap_{i=1}^r\Sigma_{h(i,j)}$-unipotence.\\
\textbf{Step 2}: the functor $j^{\dag}$ is fully faithful.\\
We have to prove that given $f:\mathcal{E}\rightarrow \mathcal{F}$, a morphism of log overconvergent isocrystals of $\Sigma$-unipotent monodromy, if there exist extensions of $\mathcal{E}$ and $\mathcal{F}$ to log convergent isocrystals with exponents in $\Sigma$ that we call respectively $\tilde{\mathcal{E}}$ and $\tilde{\mathcal{F}}$, then $f$ extends uniquely to $\tilde{f}:\tilde{\mathcal{E}}\rightarrow \tilde{\mathcal{F}}.$\\
We can work \'{e}tale locally. 
We denote by $\phi$ an \'etale map to $\hat{X}$ and again by $\hat{X}$ an \'etale neighborhood that we consider for which there exists a diagram as in (\ref{etale locally}).\\
Let us take $W$ a strict neighborhood of $]U_k[_{\hat{X}}$ in $]X_k[_{\hat{X}}$; by definition $f$ induces a morphism $\varphi$ of $\nabla$-modules between $E_{\mathcal{E}}$ and $E_{\mathcal{F}}$, the $\nabla$-modules on $W$ that are induced by $\mathcal{E}$ and $\mathcal{F}$ respectively:
$$\varphi:E_{\mathcal{E}}\rightarrow  E_{\mathcal{F}}.$$
We call $E_{\tilde{\mathcal{E}}}$ and $E_{\tilde{\mathcal{F}}}$ the log-$\nabla$-modules on $\hat{X}_K$ induced by $\tilde{\mathcal{E}}$ and $\tilde{\mathcal{F}}$.\\
Let us take the following covering of 
$$\hat{X}_K=\bigcup_{J\subset \{1\dots s\}}A_J$$
where 
$$A_J=\{P\in \hat{X}_K|\,\,\, |y_j(P)|<1 \,\,\,(j\in J)\,\,\, |y_j(P)|\geq \lambda\,\,\, (j \notin J) \}$$
and $\lambda \in (0,1)\cap \Gamma^*$ is such that both $\mathcal{E}$ and $\mathcal{F}$ are defined on the following set:
$$B=\{P\in \hat{X}_K|\,\,\, |y_j(P)|\geq \lambda \,\,\, \forall j \}.$$
The covering of $\hat{X}_{K}$ given by the $A_J$'s restricts to the following covering of $B=\bigcup_{J\subset\{1,\dots,s\}}B_J$, where 
$$B_J=\{P\in \hat{X}_K|\,\,\, \lambda \leq |y_j(P)|<1 \,\,\,(j\in J),\,\,\,|y_j(P)|\geq \lambda\,\,\, (j \notin J)\}.$$ 
The extensions $E_{\tilde{\mathcal{E}}}$ and $E_{\tilde{\mathcal{F}}}$ are log convergent in 
\begin{equation}\label{Jfinoa1}
\{P\in \hat{X}_K| y_j(P)=0 \,\,\,(j\in J),\,\,\,|y_j(P)|\geq \lambda\,\,\, (j \notin J)\}\times A^{|J|}[0,1)
\end{equation}
by proposition 3.6 of \cite{Sh6} and they have exponents in $\prod_j\cap_{i=1}^r\Sigma_{h(i,j)}.$
They extend the restrictions of $E_{\mathcal{E}}$ and $E_{\mathcal{F}}$ on 
$$\{P\in \hat{X}_K| y_j(P)=0 \,\,\,(j\in J)\,\,\, |y_j(P)|\geq \lambda\,\,\, (j \notin J)\}\times A^{|J|}[\lambda,1).$$
By theorem \ref{unipotentiequivalenti}  we can conclude that $\phi$ extends to 
$$A_J=\{P\in \hat{X}_K| y_j(P)=0 \,\,\,(j\in J)\,\,\, |y_j(P)|\geq \lambda\,\,\, (j \notin J)\}\times A^{|J|}[0,1);$$
this means that on this set there exists a unique 
$$\tilde{\phi_J}:E_{\tilde{\mathcal{E}}}\rightarrow E_{\tilde{\mathcal{F}}}$$
that extends $\phi$ on $B_J$.\\
On $A_I\cap A_J$ we have the extensions $\phi_I$ and $\phi_J$, which glue because they coincide on the set 
\begin{equation*}
\begin{split}
B_I\cap B_J=\{P\in \hat{X}_K|\,\,\,  \lambda  \leq |y_j(P)| < 1 & \,\,\,(j\in (I\cup J)-(I\cap J)),\\
& |y_j(P)|\geq \lambda \,\,\, (j \notin  (I\cup J))\}\times A^{|I\cap J|}[\lambda,1)
\end{split}
\end{equation*}
because they extend the map $\phi$ on $B_I\cap B_J.$\\
\textbf{Step 3}: the functor $j^{\dag}$ is essentially surjective.\\
Since we have the \'etale descent property for the category of locally free log convergent isocrystals (remark 5.1.7 of \cite{Sh1}) and the full faithfulness of the functor $j^{\dag}$, we may work \'etale locally to prove the essential surjectivity.\\ 
If $\mathcal{E} \in I^{\dag}((U_k,X_k),\Spf(V))^{log, \Sigma}$, then by definition of log overconvergent isocrystal we know that $\mathcal{E}$ induces 
a module with connection on the following set 
$$\{P \in \hat{X}_K|\,\,\, \forall j: |y_j(P)|\geq \lambda\}$$ 
that we will denote by $E$ that is $\cap_{i=1}^r\Sigma_{h(i,j)}$-unipotent on  
$$\hat{D}^{\circ}_{i,j,K}\times A^1[\lambda,1)=\{P \in \hat{D}_{j,K}|\,\,\, \forall j'\neq j : |y_j'(P)|=1, \,\,\, \forall i'\neq i : |x_i'(P)|=1\}\times A^1[\lambda,1) ,$$
for $i$, $j$ such that $\phi(\hat{D}^{\circ}_{i,j})\subset \hat{D}^{h(i,j)}$ .\\
We will prove that $E$ extends to a log-$\nabla$-module on $$C_{a,\lambda}=\{P \in \hat{X}_K|\,\,\, \forall j>a\,\,\,,|y_{j}(P)|\geq \lambda\}$$ $\forall a=0, \dots, s$ with exponents along $\{y_{j}=0\}$ in $\cap_{i=1}^r\Sigma_{h(i,j)}$ with $i,j$ such that $\phi(\hat{D}^{\circ}_{i,j})\subset \hat{D}^{h(i,j)}$, proceeding by induction on $a$. \\
So we suppose, by induction hypothesis, that $E$ extends to the set $C_{a-1,\lambda}=\{P \in \hat{X}_K|\,\,\, \forall j>a-1\,\,\,,|y_j(P)|\geq \lambda\}$ for some $\lambda$ with exponents along $\{y_{j}=0\}$ in $\cap_{i=1}^r\Sigma_{h(i,j)}$ with $i,j$ such that $\phi(\hat{D}^{\circ}_{i,j})\subset \hat{D}^{h(i,j)}$. \\
We consider the following admissible covering of $\hat{X}_K=A\cup B$, where
$$A=\{P \in \hat{X}_K|\,\,\,|y_{a}(P)|\geq \lambda'\}$$
$$B=\{P \in \hat{X}_K|\,\,\,|y_{a}(P)|< 1 \}$$
with $\lambda' \in  [\lambda,1)\cap \Gamma^*.$\\
Intersecting the covering $A\cup B$ with $C_{a-1,\lambda'}$ we obtain the following admissible covering:
\begin{equation}\label{C_{a-1}}
C_{a-1,\lambda'}=(C_{a-1,\lambda'}\cap A)\cup (C_{a-1,\lambda'}\cap B)=
\end{equation}
\begin{equation*}
\begin{split}
=\{P \in \hat{X}_K|\,\,\,\forall j>a-1\,\,\,|y_{j}(P)|&\geq \lambda'\}\cup \\
&\{P \in \hat{X}_K|\,\,\,\forall j>a\,\,\,|y_{j}(P)|\geq \lambda',\,\,\,\lambda' \leq |y_a(P)|<1 \}=
\end{split}
\end{equation*}
\begin{equation*}
\begin{split}
=\{P \in \hat{X}_K|\,\,\,\forall j>a-1\,\,\,|y_{j}(P)|\geq \lambda'\}\cup\\
&\{P \in \hat{D}_{a,K}|\,\,\,\forall j>a\,\,\,|y_{j}(P)|\geq \lambda'\}\times A^1[\lambda',1),
\end{split}
\end{equation*}
and intersecting with $C_{a,\lambda'}$:
\begin{equation}\label{C_a}
C_{a,\lambda'}=(C_{a,\lambda'}\cap A)\cup (C_{a,\lambda'}\cap B)=
\end{equation}
$$\{P \in \hat{X}_K|\,\,\,\forall j>a-1\,\,\,|y_{j}(P)|\geq \lambda'\}\cup\{P \in \hat{X}_K|\,\,\,\forall j>a\,\,\,|y_{j}(P)|\geq \lambda',\,\,\, |y_a(P)|<1 \} $$
$$=\{P \in \hat{X}_K|\,\,\,\forall j>a-1\,\,\,|y_{j}(P)|\geq \lambda'\}\cup\{P \in \hat{D}_{a,K}|\,\,\,\forall j>a\,\,\,|y_{j}(P)|\geq \lambda'\}\times A^1[0,1) .$$
Comparing the formulas in (\ref{C_{a-1}}) and (\ref{C_a}), we see that it is sufficient to prove that $E$ extends from
$$\{P \in \hat{D}_{a,K}|\,\,\, \forall j>a\,\,\,,|y_j(P)|\geq \lambda'\}\times A^1_K[\lambda',1)$$  
to 
$$\{P \in \hat{D}_{a,K}|\,\,\,\forall j>a\,\,\,|y_{j}(P)|\geq \lambda'\}\times A^1[0,1),$$
for some $\lambda'$ $\in$ $[\lambda,1)\cap \Gamma^*$ in a log-$\nabla$-module such that it has exponents along $\{y_{j}=0\}$ in $\cap_{i=1}^r\Sigma_{h(i,j)}$ with $i,j$ for which $\phi(\hat{D}_{i,j} ^{\circ})\subset \hat{D}^{h(i,j)}$.\\
As we saw before $E$ is $\cap_{i=1}^r\Sigma_{h(i,a)}$-unipotent on 
\begin{equation}
\begin{split}
\hat{D}^{\circ}_{i,a,K}\times &A^1_K([\lambda,1))=\\
&\{P \in \hat{D}_{a,K}|\,\,\, \forall j'\neq a : |y_j'(P)|=1, \,\,\, \forall i'\neq i : |x_i'(P)|=1\}\times A^1_K([\lambda,1)),
\end{split}
\end{equation}
so it is $\cap_{i=1}^r\Sigma_{h(i,a)}$-unipotent also on 
\[
\begin{split}
\coprod_i\hat{D}^{\circ}_{i,a,K}\times & A^1_K([\lambda,1))= \\
&
\coprod_i\{P \in \hat{D}_{a,K}|\,\,\, \forall j'\neq a : |y_j'(P)|=1, \,\,\, \forall i'\neq i : |x_i'(P)|=1\}\times A^1_K([\lambda,1)) .
\end{split}
\]
We now want  to apply proposition \ref{Overconvergent generization}; following the notation given in the proposition in our case we have that $P$ is the pull-back of 
$$\textrm{Spf}\frac{V\{x_1,\dots,x_n,y_1\dots \hat{y_a} \dots y_m\}}{x_1\dotsm x_r-\pi}\{y_j^{-1}|j<a\}\{\prod_{i,i'\,\,\,i\neq i'}(x_i-x_i')^{-1}\}$$
by the morphism 
$$\hat{X}\rightarrow \Spf V\{x_1,\dots, x_n, y_1, \dots, y_m\}/(x_1\dotsm x_r - \pi),$$
$Y_k$ is the open defined in $P_k$ by the following equation 
$$\{y_{a+1}\dotsm y_s\neq 0\}$$ 
and 
\begin{equation*}
\begin{split}
W\times A^1_K(I)=\{P \in \hat{D}_{a,K}|&\,\,\, \forall j < a : |y_j(P)|=1,\\
& \,\,\, \forall j>a\,\,\,,|y_j(P)|\geq \lambda  \,\,\, \forall i'\neq i : |x_i'(P)|=1\} \times  A^1_K([\lambda,1)).
\end{split}
\end{equation*}
The hypothesis of proposition \ref{Overconvergent generization} are fulfilled.\\
The restriction of $E$ to  
$$\{P \in \hat{D}_{a,K}|\,\,\, \forall j>a\,\,\,,|y_j(P)|\geq \lambda \}\times A^1_K([\lambda,1))$$
is a log-$\nabla$-module with exponents in $\prod_j\cap_{i=1}^r\Sigma_{h(i,j)}$,  that is $\prod_j\cap_{i=1}^r\Sigma_{h(i,j)}$-unipotent on 
$$]Y_k[_P=\coprod_i\{P \in \hat{D}_{a,K}|\,\,\, \forall j'\neq a : |y_j'(P)|=1, \,\,\, \forall i'\neq i : |x_i'(P)|=1\}\times A^1_K([\lambda,1));$$
so applying proposition \ref{Overconvergent generization} we know that for every $[b,c]$ $\subset [\lambda,1)$ there exists a $\lambda'$ (we suppose that it verifies $\lambda' \in (c,1)$ for gluing reasons) 
such that $E$ is $\prod_j\cap_{i=1}^r\Sigma_{h(i,j)}$-unipotent on
\begin{equation*}
\begin{split}
\coprod_i\{P \in \hat{D}_{a,K}|&\,\,\, \forall j < a : |y_j(P)|=1,\\
& \,\,\, \forall j > a : |y_j(P)|\geq \lambda', \,\,\, \forall i'\neq i : |x_i'(P)|=1\}\times A^1([b,c]).
\end{split}
\end{equation*}
Now we apply proposition \ref{Generization of monodromy} with 
\begin{equation*}
\begin{split}
\textrm{Spm}(L)=\coprod_i\{P \in \hat{D}_{a,K}|&\,\,\, \forall j < a : |y_j(P)|=1,\\
 & \forall j > a : |y_j(P)|\geq \lambda', \,\,\, \forall i'\neq i : |x_i'(P)|=1\}
\end{split}
\end{equation*}
and 
$$Y=\{P \in \hat{D}_{a,K}| \,\,\, \forall j > a : |y_j(P)|\geq \lambda' \};$$
we are in the hypothesis of that proposition thanks to lemma \ref{denso}, hence we deduce that $E$ is $\prod_j\cap_{i=1}^r\Sigma_{h(i,j)}$-unipotent on
$$\{P \in \hat{D}_{a,K}|\,\,\, \forall j > a : |y_j(P)|\geq \lambda'\}\times A^1((b,c)).$$
By theorem \ref{unipotentiequivalenti} we see that $E$ can be extended to a $\prod_j\cap_{i=1}^r\Sigma_{h(i,j)}$-unipotent log-$\nabla$-module,  in particular to a log-$\nabla$-module with exponents in $\prod_j\cap_{i=1}^r\Sigma_{h(i,j)}$ on
$$\{P \in \hat{D}_{a,K}|\,\,\, \forall j > a : |y_j(P)|\geq \lambda'\}\times A^1([0,c)).$$
Now we glue this with $E$ and we obtain a log-$\nabla$-module with exponents in the set $\prod_j\cap_{i=1}^r\Sigma_{h(i,j)}$ on
$$\{P \in \hat{D}_{a,K}|\,\,\, \forall j>a\,\,\,,|y_j(P)|\geq \lambda'\}\times A^1_K([0,1))$$  
as we wanted.\\
Therefore we have a log-$\nabla$-module defined on the space $\hat{X}_K$ and we now prove that it is convergent.\\
We know that the restriction of $E$ to $\hat{U}_K$ is log convergent because it is an extension of an overconvergent log isocrystal on $\hat{U}_K$, hence it belongs to the category
$$I_{\textrm{conv}}((\hat{U}, M),(\Spf(V),N))^{lf}.$$
Since $\hat{U}$ is an open dense in $\hat{X}$, we have a module with log connection defined in the whole space that is convergent on an open dense of the space; we can apply proposition \ref{U denso} and conclude that $E$ is convergent.\\
\end{proof}

\section{Main theorem}
As we saw in proposition \ref{log extension} there is an equivalence of categories 
$$j^{\dag}:I_{conv}((\hat{X},M)/(\Spf(V),N))^{lf,\Sigma}\longrightarrow I^{\dag}((U_k,X_k)/\Spf(V))^{log, \Sigma}.$$ 
Now we want to compare this to the category $MIC((X_K,M)/K)^{lf}$ that we defined in definition \ref{connections}. We can define the notion of exponents also in the algebraic case, giving the analogous definition that we gave before definition \ref{residueanalytic}, replacing the rigid analytic space $\hat{X}_K$ with the algebraic space $X_K$, the divisor $\hat{D}_K$  with the divisor $D_K$ and the $\mathcal{O}_{\hat{X}_K}$-module $\omega^1_{(\hat{X}_K,M)/K}$ with the $\mathcal{O}_{X_K}$-module $\omega^1_{(X_K,M)/K}$.\\ 
We fix a set $\Theta=\prod_{p=1}^f \Theta_p \subset \bar{K}^f$, where $f$ is the number of irreducible components of the divisor $D_K=\cup_{p=1}^fD^{p;K}$.  
We say that (E,$\nabla$) in $MIC((X_K,M_D)/K)$ has residue along $D_{K}$ in $\Theta$ if \'etale locally there exists a diagram analogous to  (\ref{etale locally})
such that for every $l$ the log-$\nabla$-module $X_{l;K}$ induced by ($E$, $\nabla$) has exponents along $D_{j,l;K}$ in $\Theta_{p(j)}$ for every $j$ such that $\phi_{l;K}(\hat{D}_{j,l})\subset D^{p(j)}$.\\
We will denote the category of locally free module with integrable log connection with exponents in $\Theta$ by $MIC((X_K,M_D)/K)^{\Theta}$.
We can prove as in lemma \ref{indepnilpotentresidue} that the notion of exponents in $\Theta$ can be given \'etale locally and that is independent on the choice of a diagram as in (\ref{etale locally}). 
If $(E,\nabla)$ is in $MIC((X_K,M_D)/K)$ we restrict locally \'etale in a situation for which there exists a diagram analogous to (\ref{etale locally}) for the algebraic setting and we look at the exponents of $(E,\nabla)$ along $D_{j;K}$. In particular we consider the log-$\nabla$-module $(\hat{E},\hat{\nabla})$ on $\hat{X}_K$ induced by the log infinitesimal locally free isocrystal $\Psi (E,\nabla)$ (where $\Psi$ is the functor defined in proposition \ref{derhaminf}) and the residue of it along $\hat{D}_{j;K}$.
We have a map 
$$\textrm{End}_{\mathcal{O}_{D_{j;K}}}(E|_{D_{j;K}})\rightarrow \textrm{End}_{\mathcal{O}_{\hat{D}_{j,K}}}(\hat{E}|_{\hat{D}_{j;K}}),$$
that sends the residue of $(E,\nabla)$ along $D_{j;K}$ to the residue of $(\hat{E},\hat{\nabla})$ along $\hat{D}_{j;K}.$
Moreover the map is injective because the map
$$\Gamma(D_{j;K},\mathcal{O}_{D_{j;K}})\rightarrow \Gamma(\hat{D}_{j,K},\mathcal{O}_{\hat{D}_{j;K}})$$
is injective.
This means that \'etale locally $(E,\nabla)$ and the log-$\nabla$-module $(\hat{E},\hat{\nabla})$ have the same exponents along $D_{j;K}$ and $\hat{D}_{j;K}$ respectively. \\ 
The relation between $\Theta$ and $\Sigma$ is as follows. \\
Given a log infinitesimal isocrystal $\mathcal{E}$ with exponents in $\Sigma$ then the functor $\Psi^{-1}$ associates to it a module with integrable log connection $(E,\nabla)$ such that it has exponents in $\Theta=\prod_{p=1}^f\Theta_p$, where the $p$-th component $\Theta_p$ is given by $\cap_{i=1}^r\Sigma^{h(i,j)}$ where $j$ is such that $\phi_{l,K}(D_{j;K})\subset D^{p;K}.$ Viceversa given a module with integrable log connection $(E, \nabla)$ such that it has exponents in $\Theta$, the functor $\Psi$ associates to it a log infinitesimal isocrystal $\mathcal{E}$ with exponents in $\Sigma=\prod_{h=1}^k\Sigma_h$  where the $h$-th component is given by $\Theta^{p(j)}$ where $j$ is such that $\phi(\hat{D}^{\circ}_{i,j})\subset\hat{D}^h$.\\
From this it follows that the functor $\Psi$ induces an equivalence of categories
$$MIC((X_K,M_D)/(K,\textrm{triv}))^{lf,\Theta}\longrightarrow I_{inf}((\hat{X},M)/(\Spf V,N))^{lf,\Sigma}.$$
If we start from a log overconvergent isocrystal $\mathcal{E}$ with $\Sigma$-unipotent monodromy as in \ref{unipotentmonodromy}, we apply the equivalence of category given by the functor $j^{\dag}$ of theorem \ref{log extension} and the observations written above, we can conclude that there is fully faithful functor 
$$I^{\dag}((U_k, M),(X_k,M))/(\Spf(V),N))^{\Sigma}\longrightarrow MIC((X_K,M_D)/(K,\textrm{triv}))^{lf,\Theta}.$$

The logarithmic extension theorem of Andr\ac e and Baldassarri (theorem 4.9 of \cite{AnBa}) gives an equivalence of category between $MIC((X_K,M_D)/(K,\textrm{triv}))^{lf,\Theta}$ and the category of coherent modules with connection on $U_K$ regular along $D_K$, that we denote by $MIC(U_K/K)^{reg}$ gives us the general result.\\
\begin{theorem}
There is a fully faithful functor
$$I^{\dag}((U_k, M),(X_k,M))/(\Spf(V),N))^{\Sigma}\longrightarrow MIC(U_K/K)^{reg}.$$
\end{theorem}

\section*{Acknowledgements}
This work is a generalization of the author's PhD thesis. It is a pleasure to thank her advisors: prof. Francesco Baldassarri and prof. Atsushi Shiho. Much of this work was done while the author was visiting the University of Tokyo; she wants to thank all the people there for their hospitality. In particular she wants to acknowledge prof. Atsushi Shiho for the helpfulness in guiding her work and for the careful reading of her thesis and prof. Takeshi Saito for many useful discussions. The author would like to express her gratitude also to prof. Bruno Chiarellotto and prof. Marco Garuti for valuable advises. Many thanks go to the referee for his useful comments and remarks.

\end{document}